\newtheorem{theorem}{Theorem}[section]
\newtheorem{lemma}[theorem]{Lemma}
\newtheorem{corollary}[theorem]{Corollary}
\newtheorem{prop}[theorem]{Proposition}
\newtheorem{observ}[theorem]{Observation}
\newcommand{\ideal}[1]{\mathfrak{#1}}
\newcommand{\union}{\Delta_1\cup\Delta_2}
\DeclareMathOperator{\core}{-core}
\DeclareMathOperator{\icore}{core}
\DeclareMathOperator{\ann}{ann}
\DeclareMathOperator{\spread}{-spread}
\begin{document}
\author{Thomas M. Ales}
\address{Department of Mathematics, George Mason University, Fairfax, VA 22030}
\email{tales@masonlive.gmu.edu}

\title[$*\core\ideal{m}$ in a Stanley Reisner ring]{
The $*\core$ of the graded maximal ideal in a Stanley-Reisner ring}
\maketitle
\begin{abstract}
We consider ideals $I$ in a Stanley-Reisner ring $k[\Delta]$ over the simplical complex $\Delta$, such that the tight closure of $I$, $I^*$, is equal to $\ideal{m}$, the standard graded maximal ideal of $k[\Delta]$. We determine the minimal number of generators of $I$ to be the $\dim \Delta+1$ and note the important role this value plays in bounding the intersection of all such ideals $I$. We make mention of this intersection in special cases of Stanley-Reisner rings. We conclude with a description of how this work relates to integral closure.
\end{abstract}

\section{Introduction}
All rings in this paper are commutative, with identity, and Noetherian.  Further all rings also contain an infinite field of characteristic $p$ or characteristic 0, though making a distinction between the two has no bearing on the results of this paper, largely due to the work in \cite{HH-tcz}.

In section \ref{sr} we outline the basics of the relatively well known concept of a Stanley-Reisner ring, which were studied in depth by Hochster \cite{SR-Hoch} , Reisner \cite{SR-Reis}, and Stanley \cite{SR-Stan} in the 1970's. Specifically we discuss how a Stanley-Reisner ring relates to a simplicial complex, paying special attention to the dimension and minimal primes of a Stanley-Reisner ring.

We must also know a few key details, definitions, and theorems regarding tight closure.  These are covered in Section \ref{tc}. Tight closure was introduced in the 1980's for characteristic $p$ by Hochster and Huneke \cite{HHOrigin} \cite{HHmain} and later expanded to characteristic 0 \cite{HH-tcz}, and has since become a major topic of focus in commutative algebra. The most useful theorems in the section are Theorems \ref{poly} and \ref{inclusion} which enable us to use what we know about Stanley-Reisner rings from Section \ref{sr} and say a great deal about ideals that tightly close to the standard graded maximal ideal $\ideal{m}$ in a Stanley-Reisner ring.

For section \ref{reduc}, we pick a target ideal for the tight closure of an ideal $I$, namely the graded maximal ideal $\ideal{m}$ of a $k[\Delta]$, and describe the criteria required for $I$ to tightly close to that ideal.  Special mention is made of the number of generators and overall structure required of $I$.

In section \ref{bds}, we fix the number of generators of $I$ to its minimally required number and define $*\core$ to be the intersection of these minimally generated ideals.  We make mention of a known lower bound for $*\core{\ideal{m}}$, namely $\tau\cdot\ideal{m}$ where $\tau=\sum \ann{P}$ for all minimal primes $P$ of $k[\Delta]$ and associated pitfalls.  We also introduce an upper bound that is consistent for all Stanley-Reisner rings and a lower bound that depends only on the dimension of the ring (and the dimension of the complex). 

We then proceed with a discussion of $*\core{\ideal{m}}$ in special cases with an eye toward determining $*\core{\ideal{m}}$ when $\Delta$ is a simple graph and reduce all future work to the question of what $\core{\ideal{m}}$ is in the case where $\Delta$ is connected.

Finally, in section \ref{ic} we discuss the broader implications of the work in the paper to integral closure.  Specifically we notice that everything proven about tight closure in this paper is also true for integral closure, and the paper may therefore be viewed in the context of integral closure if it is more relevant to the reader.
\section{Stanley-Reisner rings}
\label{sr}

Let $V=\{v_1,\ldots, v_n\}$ be a finite set. A \textit{simplicial complex} $\Delta$ on $V$ is a collection of elements from $2^{V}$, the power set of $V$, such that if $F\in \Delta$ and $G\subset F$, then $G\in\Delta$, and such that $\{v_i\}\in\Delta$ for $i=1,\ldots,n$. The elements of $\Delta$ are called \textit{faces} and the maximal faces (under inclusion) are called \textit{facets}. The dimension of a face $F$ of $\Delta$ is $\dim F=|F|-1$, and the dimension of a simplicial complex is \[\dim\Delta=\max\{\dim F: F \in\Delta\}.\] We define a simplicial complex $\Delta$ to be \textit{proper} if $\Delta\neq 2^V$, otherwise $\Delta$ is a simplex. Further for $d<n$, we define $\Delta_{d,n}$ be the $d-1$ dimensional proper simplicial complex on $n$ vertices such that every facet of $\Delta$ is a set of size $d$ and all size $d$ subsets of $V$ are facets.  The distinction that $d$ is strictly less than $n$ is important because if $d=n$, $\Delta_{d,n}$ is a simplex and $d>n$ is impossible.
 
 If $\Delta$ is a simplicial complex over vertex set $V=\{v_1,\ldots, v_n\}$, define $I_{\Delta}$ to be the ideal of $k[x_1,\ldots,x_n]$, the polynomial ring over an infinite field $k$ of characteristic $p$ or $0$, generated by all monomials $x_{i_{1}}x_{i_{2}}\cdots x_{i_{s}}$ such that $\{v_{i_{1}},v_{i_{2}},\ldots,v_{i_{s}}\}\notin \Delta$.We define the Stanley-Reisner ring $k[\Delta]$ to be the quotient ring $k[x_1,\ldots,x_n]/I_{\Delta}$. The ring $k[\Delta]$ is a polynomial ring if and only if $\Delta$ is a simplex.  There exists a natural one-to-one correspondence between all simplicial complexes on $n$ vertices and all square free monomial ideals of $k[x_1,\ldots,x_n]$ with generators of degree greater than 1.
 
 Stanley-Reisner rings have a few useful and interesting properties. The first of these can  be found in \cite{BH}, among other sources, and provides a characterization of all the minimal primes of the ring $k[\Delta]$ as well as the dimension of the ring:
 \begin{theorem}[\cite{BH}, Theorem 5.1.4]
 Let $\Delta$ be a simplicial complex and $k$ a field then\[I_{\Delta}=\bigcap\limits_{F}\mathfrak{B}_{F}\]where the intersection is taken over all facets $F$ of $\Delta$, and $\mathfrak{B}_F$ denotes the prime ideal generated by all $x_i$ such that $v_i\notin F$.  In particular, \[\dim k[\Delta]=\dim\Delta+1.\]
 \end{theorem}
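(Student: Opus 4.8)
The plan is to prove the ideal equality first, then extract from it the list of minimal primes and compute the Krull dimension. For the inclusion $I_\Delta\subseteq\bigcap_F\mathfrak{B}_F$ I would argue on generators: a generator of $I_\Delta$ is a squarefree monomial $x_{i_1}\cdots x_{i_s}$ whose support $W=\{v_{i_1},\dots,v_{i_s}\}$ is a non-face of $\Delta$. Fix a facet $F$. Since $\Delta$ is closed under taking subsets, $W\subseteq F$ would force $W\in\Delta$; hence $W\not\subseteq F$, so some $v_{i_j}\notin F$, and then $x_{i_j}$ is among the generators of $\mathfrak{B}_F$, so $x_{i_1}\cdots x_{i_s}\in\mathfrak{B}_F$. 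As $F$ was an arbitrary facet, the generator lies in $\bigcap_F\mathfrak{B}_F$, and therefore so does all of $I_\Delta$.

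For the reverse inclusion I would use that an intersection of monomial ideals is again a monomial ideal, so it suffices to show every monomial $m\in\bigcap_F\mathfrak{B}_F$ lies in $I_\Delta$. Let $W$ be the support of $m$. If $W$ were a face, it would be contained in some facet $F$; then none of the variables dividing $m$ appears among the generators of the monomial prime $\mathfrak{B}_F$, and since a monomial lies in an ideal generated by a set of variables only if one of those variables divides it, we would get $m\notin\mathfrak{B}_F$ — a contradiction. Hence $W$ is a non-face, so the squarefree monomial with support $W$ is a generator of $I_\Delta$, and $m$, being a multiple of it, lies in $I_\Delta$. This gives $I_\Delta=\bigcap_F\mathfrak{B}_F$.

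Next I would observe that each $\mathfrak{B}_F$ is prime, since $k[x_1,\dots,x_n]/\mathfrak{B}_F$ is isomorphic to the polynomial ring $k[x_i:v_i\in F]$, an integral domain of Krull dimension $|F|$. Thus the displayed formula is a prime decomposition of $I_\Delta$, and it is irredundant because $\mathfrak{B}_F\subseteq\mathfrak{B}_{F'}$ is equivalent to $F'\subseteq F$, which is impossible for distinct facets; hence the $\mathfrak{B}_F$ are exactly the minimal primes of $k[\Delta]$. Using that $\dim R/I=\max\{\dim R/P:P\text{ a minimal prime over }I\}$, together with $\dim k[x_1,\dots,x_n]/\mathfrak{B}_F=|F|$, I get $\dim k[\Delta]=\max_F|F|$. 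Since every face is contained in a facet and $\dim$ is monotone, $\max_F|F|=\max_F(\dim F+1)=\dim\Delta+1$.

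The only genuinely delicate point I anticipate is the inclusion $\bigcap_F\mathfrak{B}_F\subseteq I_\Delta$: one must justify that testing on monomials is legitimate (intersections of monomial ideals stay monomial) and invoke the monomial-membership criterion for the primes $\mathfrak{B}_F$ correctly. Everything else is bookkeeping with faces and facets plus the standard facts that $\dim R/I$ is computed on minimal primes and that a polynomial ring in $d$ variables has dimension $d$.
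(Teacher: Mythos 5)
Your proof is correct. The paper does not prove this statement at all---it is quoted directly from Bruns--Herzog (Theorem 5.1.4)---and your argument (checking the two inclusions on monomial generators, observing that the $\mathfrak{B}_F$ give an irredundant prime decomposition, and computing dimension via the minimal primes) is exactly the standard proof found there; all the steps, including the monomial-membership criterion and the reduction to monomials via the fact that intersections of monomial ideals are monomial, are sound.
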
 
 For example, the ring $k[\Delta]$ with $I_{\Delta}=(x_1x_4,x_2x_4)$ corresponds to the simplicial complex $\Delta$ in the following figure with facets $\{v_1,v_2,v_3\}$ and $\{v_3,v_4\}$. 
 \[\begin{tikzpicture}
[scale=2, vertices/.style={draw, fill=black, circle, inner sep=0.5pt}]
\node[vertices, label=below:{$v_4$}] (w) at (0,0){};
\node[vertices, label=below:{$v_3$}] (x) at (0.75,0) {};
\node[vertices, label=right:{$v_2$}] (z) at (1.75,0.5) {};
\node[vertices, label=right:{$v_1$}] (y) at (1.75,-0.5) {};

\draw[pattern=north east lines] (x.center) -- (y.center) -- (z.center) -- cycle;
\draw (w.center) -- (x.center);
\end{tikzpicture}\] The minimal primes of $k[\Delta]$ corresponding to each of these facets are, respectively, $P=(x_4)$ and $Q=(x_1,x_2)$.  The dimension of this ring is three because $\dim\Delta=2$ and $P\cap Q=(x_1x_4,x_2x_4)=I_{\Delta}$.

\section{Tight Closure}
\label{tc}

Let $I\subset R$ be an ideal, where $R$ is a ring of prime characteristic $p>0$. The \textit{tight closure} $I^{*}$ of $I$ is the set of all elements $x\in R$ for which there exists $c\in R^{\circ}$ with $cx^{p^e}\in I^{[p^e]}$ for $p^e\gg 0$, where $R^{\circ}$ is the set of elements of $R$ not contained in any minimal prime of $R$.  One says $I$ is \textit{tightly closed} if $I=I^{*}$. When $R$ is a finitely generated algebra over a field $k$ of characteristic 0, one passes to characteristic $p$ models of $R$ and $x\in I^*$ if true for almost all characteristic $p$ models. For details see \cite{HH-tcz}.

Since for a minimal prime $P$ in a Stanley-Reisner ring $k[\Delta]$, $k[\Delta]/P$ is a polynomial ring, the following result from \cite[Theorem 4.4]{HHmain} in characteristic $p$ and \cite[Theorem 4.1.1]{HH-tcz} for characteristic 0 will be useful.
\begin{theorem}
\label{poly}
If $R$ is a polynomial ring over a field $k$ and $I$ is an ideal of $R$, then $I^*=I$.
\end{theorem}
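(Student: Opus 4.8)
My plan is to reduce to prime characteristic and then exploit the fact that a polynomial ring over a field is regular. By the passage to characteristic $p$ models described above (see \cite{HH-tcz}) it suffices to treat the case where $R$ has characteristic $p>0$. In that case $R$ is a Noetherian domain, so $R^{\circ}=R\setminus\{0\}$, and, being regular, $R$ has flat Frobenius endomorphism by Kunz's theorem. This flatness is the one substantive input; the rest of the argument is formal manipulation of colon ideals.

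The key point is the colon identity
\[
I^{[p^e]}:_R x^{p^e}\;=\;(I:_R x)^{[p^e]}\qquad\text{for all }e,
\]
valid in any ring with flat Frobenius. I would derive it by applying the exact functor $-\otimes_R{}^{e}R$ (iterated Frobenius base change): exactness turns the kernel $(I:_R x)/I$ of $\cdot x$ on $R/I$ into the kernel $(I^{[p^e]}:_R x^{p^e})/I^{[p^e]}$ of $\cdot x^{p^e}$ on $R/I^{[p^e]}$, while the identifications $(R/J)\otimes_R{}^{e}R\cong R/J^{[p^e]}$ also present this same base change as $(I:_R x)^{[p^e]}/I^{[p^e]}$; equating the two yields the identity.

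Granting this, let $x\in I^{*}$, so there is $c\in R^{\circ}$ with $cx^{p^e}\in I^{[p^e]}$ for all $e\gg 0$. Then $c\in I^{[p^e]}:_R x^{p^e}=(I:_R x)^{[p^e]}$ for all $e\gg 0$. If $x\notin I$, then $\ideal{a}:=(I:_R x)$ is a proper ideal, and since $\ideal{a}^{[p^e]}\subseteq\ideal{a}^{p^e}$ we get $c\in\bigcap_{e\gg 0}\ideal{a}^{p^e}\subseteq\bigcap_{n\ge 1}\ideal{a}^{n}=(0)$ by the Krull intersection theorem in the Noetherian domain $R$. This forces $c=0$, contradicting $c\in R^{\circ}$. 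Hence $x\in I$, so $I^{*}\subseteq I$; the inclusion $I\subseteq I^{*}$ is automatic, so $I^{*}=I$.

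The one real obstacle is justifying the flatness of Frobenius that underlies the colon identity — this is exactly where the hypothesis ``polynomial ring over a field'' is used, through Kunz's regularity criterion. (For perfect $k$ one could instead check flatness by hand, exhibiting $R$ as a free module over $R^{p}$ on the monomials all of whose exponents are less than $p$; invoking Kunz avoids any assumption on $k$.)
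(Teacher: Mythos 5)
Your argument is correct, but note that the paper does not prove this statement at all: it is imported as a known theorem, with citations to Hochster--Huneke (\cite[Theorem 4.4]{HHmain} in characteristic $p$ and \cite[Theorem 4.1.1]{HH-tcz} in characteristic $0$), so there is no in-paper proof to compare against. What you have written is a correct, self-contained rendition of the standard argument from those references: reduce to characteristic $p$ via models, use Kunz's theorem to get flatness of Frobenius, derive the colon identity $I^{[p^e]}:_R x^{p^e}=(I:_R x)^{[p^e]}$ by base change along ${}^{e}R$, and then kill the test element $c$ with the Krull intersection theorem applied to the proper ideal $(I:_R x)$ when $x\notin I$. Each step checks out — in particular $R^{\circ}=R\setminus\{0\}$ since $R$ is a domain, $\mathfrak{a}^{[p^e]}\subseteq\mathfrak{a}^{p^e}$, and $\bigcap_{e\gg 0}\mathfrak{a}^{p^e}=\bigcap_{n}\mathfrak{a}^{n}=(0)$ because the $p^e$ are cofinal among the exponents. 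The only place where you are terser than a full proof is the characteristic-$0$ reduction, which silently uses that membership of $x$ in $I$ descends from almost all characteristic-$p$ fibers back to the original ring; that is a standard ingredient of the reduction-to-characteristic-$p$ machinery and is at the same level of informality as the paper's own description of that process, so it is an acceptable gap in context. In short: where the paper cites, you prove, and your proof is essentially the one in the cited source rather than a genuinely different route.
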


If we pair Theorem \ref{poly} with Theorem \ref{inclusion}, we can compute the tight closure of ideals in Stanley-Reisner rings relatively easily.  Theorem \ref{inclusion} is found as \cite[Lemma 2.10(c)(1)]{AHH} for characteristic $p$ and \cite[Theorem 2.5.5(n)]{HH-tcz} for characteristic 0.
\begin{theorem}
\label{inclusion}
Let $R$ be a Noetherian ring, $I$ an ideal of $R$, and $x\in R$.  Then $x\in I^{*}$ if and only if for all minimal primes $P$ of $R$, the image of $x$ is in $(IR/P)^*$ as an ideal of $R/P$.
\end{theorem}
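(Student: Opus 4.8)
\emph{Proof strategy.} The plan is to prove both implications directly from the defining condition for tight closure in the characteristic $p$ setting, and then to obtain the characteristic $0$ statement by the usual reduction to characteristic $p$ models as developed in \cite{HH-tcz}. Throughout I use that $R$ is Noetherian, so it has only finitely many minimal primes $P_1,\dots,P_n$; that each $R/P_i$ is a domain, so $(R/P_i)^{\circ}=R/P_i\setminus\{0\}$; and that the image of $I^{[q]}$ in $R/P_i$ is $(I^{[q]}+P_i)/P_i=(I(R/P_i))^{[q]}$.

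For the forward implication, suppose $x\in I^{*}$, witnessed by $c\in R^{\circ}$ with $cx^{q}\in I^{[q]}$ for all $q=p^{e}\gg 0$. Fix a minimal prime $P=P_i$. Since $c$ lies in no minimal prime it is in particular not in $P$, so its image $\bar c$ in the domain $R/P$ is nonzero, hence lies in $(R/P)^{\circ}$. Reducing the containment $cx^{q}\in I^{[q]}$ modulo $P$ gives $\bar c\,\bar x^{q}\in (I(R/P))^{[q]}$ for all $q\gg 0$, so $\bar x\in (I(R/P))^{*}$. This direction is routine; the real content is the converse.

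For the converse, assume that the image of $x$ in $R/P_i$ lies in $(I(R/P_i))^{*}$ for every $i$, so there are $c_i\notin P_i$ and bounds $q_i$ with $c_ix^{q}\in I^{[q]}+P_i$ for all $q=p^{e}\ge q_i$. Since each $P_i$ is minimal we have $\bigcap_{j\ne i}P_j\not\subseteq P_i$, so by prime avoidance we may choose $d_i\in\big(\bigcap_{j\ne i}P_j\big)\setminus P_i$. Then $d_iP_i\subseteq\big(\bigcap_{j\ne i}P_j\big)\cap P_i=N$, the nilradical of $R$, hence $d_ic_ix^{q}\in I^{[q]}+N$ for all $q\ge q_i$. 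Put $c=\sum_i d_ic_i$. Modulo $P_i$ every term with $j\ne i$ vanishes (as $d_j\in P_i$), so $c\equiv d_ic_i\not\equiv 0$; thus $c\in R^{\circ}$, and $cx^{q}\in I^{[q]}+N$ for all $q=p^{e}\ge q_0:=\max_i q_i$.

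It remains to clear the nilradical, and this is the step where characteristic $p$ is genuinely used. Choose $k$ with $N^{k}=0$ and $j$ with $p^{j}\ge k$. For each $q\ge q_0$ write $cx^{q}=a_q+n_q$ with $a_q\in I^{[q]}$ and $n_q\in N$. By additivity of the Frobenius endomorphism, $c^{\,p^{j}}x^{qp^{j}}=(cx^{q})^{p^{j}}=a_q^{\,p^{j}}+n_q^{\,p^{j}}=a_q^{\,p^{j}}$, since $n_q^{\,p^{j}}\in N^{p^{j}}=0$, and $a_q^{\,p^{j}}\in (I^{[q]})^{[p^{j}]}=I^{[qp^{j}]}$. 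As $q$ runs over all powers of $p$ with $q\ge q_0$, the exponent $qp^{j}$ runs over all powers of $p$ that are $\ge q_0p^{j}$, so with the single element $c^{\,p^{j}}\in R^{\circ}$ we have $c^{\,p^{j}}x^{Q}\in I^{[Q]}$ for all $Q=p^{e}\gg 0$, that is, $x\in I^{*}$. The characteristic $0$ case then follows by passing to characteristic $p$ models as in \cite{HH-tcz}. I expect the two real obstacles to be exactly the patching of the local witnesses $c_i$ into one element of $R^{\circ}$ (resolved by prime avoidance together with the sum $\sum_i d_ic_i$) and the clearing of the nilradical (resolved by raising to a large Frobenius power and using that Frobenius is additive); once these are in place the rest is a direct manipulation of the defining condition.
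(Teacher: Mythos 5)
Your proof is correct, but it is worth noting that the paper does not prove this statement at all: it is quoted as known, with citations to \cite[Lemma 2.10(c)(1)]{AHH} for characteristic $p$ and \cite[Theorem 2.5.5(n)]{HH-tcz} for characteristic $0$. What you have written is essentially the standard argument from those sources, and both of its genuinely nontrivial steps are handled properly. The forward direction is fine (it is really just persistence for the quotient map, made explicit: $c\notin P$ because $P$ is minimal, so $\bar c\in(R/P)^{\circ}$). In the converse, the two real issues are exactly the ones you isolate. First, patching: your elements $d_i\in\bigl(\bigcap_{j\ne i}P_j\bigr)\setminus P_i$ exist because a prime containing the finite intersection $\bigcap_{j\ne i}P_j$ would contain some $P_j$, contradicting minimality and distinctness --- this is the ``prime contains an intersection implies it contains a factor'' lemma rather than prime avoidance, a harmless misnomer --- and your verification that $c=\sum_i d_ic_i$ reduces to $d_ic_i\not\equiv 0$ modulo each $P_i$ correctly shows $c\in R^{\circ}$. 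Second, clearing the nilradical: since $R$ is Noetherian, $N^{k}=0$ for some $k$, and raising to the Frobenius power $p^{j}\ge k$ kills $n_q$ while sending $I^{[q]}$ into $I^{[qp^{j}]}$, so the single test element $c^{p^{j}}$ works for all large $Q$. Two small points you should make explicit if you write this up: the $c_i$ are chosen as lifts to $R$ of elements of $(R/P_i)^{\circ}$, hence $c_i\notin P_i$; and the characteristic $0$ case is not proved but deferred to the formalism of characteristic $p$ models, which is consistent with how the paper itself treats characteristic $0$ throughout.
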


As an example of how Theorems \ref{poly} and \ref{inclusion} can work together, let $k[\Delta]$ be the Stanley-Reisner ring corresponding to the simplicial complex consisting of only two vertices:\[\begin{tikzpicture}
[scale=2, vertices/.style={draw, fill=black, circle, inner sep=0.5pt}]
\node[vertices, label=below:{$x$}] (w) at (0,0){};
\node[vertices, label=below:{$y$}] (x) at (0.75,0.75) {};

\end{tikzpicture}\]\[k[\Delta]=k[x,y]/(xy).\] The set of minimal primes is $\{(x), (y)\}$.  Let $I=(x+y)$ be an ideal of $R$.  Then, \[Ik[\Delta]/(x)=yk[y]=(yk[y])^{*}\]\[Ik[\Delta]/(y)=xk[x]=(xk[x])^{*}\] by Theorem \ref{poly}. In $Ik[\Delta]/(x)$, $\overline{x}=\overline{0}$ and in $Ik[\Delta]/(y)$, $\overline{x}=\overline{x+y}$, so by Theorem \ref{inclusion}, $x\in (x+y)^{*}$.  Similarly, $y\in (x+y)^*$.  Thus, \[(x,y)\subseteq (x+y)^{*}\subseteq (x,y)^{*}=(x,y)\] and the tight closure of $(x+y)$ is $(x,y)$.

Because if $\Delta\subseteq\Delta'$ are simplicial complexes, there exists a natural surjection $f:k[\Delta']\twoheadrightarrow k[\Delta]$. It will be important to know how the tight closure of an ideal is affected by such a map.  For this we turn to \cite[Theorem 6.24]{HHbase} if the base field $k$ is characteristic $p$ and to \cite[Theorem 2.5.5(k)]{HH-tcz} if $k$ is characteristic 0.
\begin{theorem}
\label{persist}
If $f: R \rightarrow S$ is a map of finitely generated algebras over a field, $I$ an ideal of $R$, and $x\in I^*$, then $f(x) \in (f(I)S)^*$. In particular $f(I^*)\subseteq f(I)^*$.
\end{theorem}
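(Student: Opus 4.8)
The plan is to reduce at once to prime characteristic $p$ (the characteristic-$0$ case passing to characteristic-$p$ models, as in \cite{HH-tcz}), and then to run the obvious computation while pinning down the one place it can break. Given $x\in I^{*}$, I would choose $c\in R^{\circ}$ with $cx^{q}\in I^{[q]}$ for all $q=p^{e}\gg 0$, write $I=(a_1,\dots,a_r)$ and $cx^{q}=\sum_i r_i a_i^{q}$, and apply the ring homomorphism $f$ to get $f(c)f(x)^{q}=\sum_i f(r_i)f(a_i)^{q}\in (f(I)S)^{[q]}$ for all $q\gg 0$. If $f(c)\in S^{\circ}$, this says exactly $f(x)\in (f(I)S)^{*}$; the ``in particular'' statement then follows by applying this to each $x\in I^{*}$. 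So the whole content is to produce a witnessing multiplier that avoids every minimal prime of $S$.

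Next I would cut down to the case that $R$ and $S$ are both domains. Applying Theorem~\ref{inclusion} to $S$, it suffices to handle each $S/\ideal{q}$ for $\ideal{q}$ a minimal prime of $S$; replacing $S$ by $S/\ideal{q}$ and $f$ by the composite $R\to S\twoheadrightarrow S/\ideal{q}$, assume $S$ is a domain, so $P:=\ker f$ is prime. Picking a minimal prime $P_0\subseteq P$ of $R$, the image $\bar x$ of $x$ in $R/P_0$ lies in $(I\cdot R/P_0)^{*}$ (by Theorem~\ref{inclusion} on $R$, or simply because $c\notin P_0$ makes $\bar c$ a nonzero witness in the domain $R/P_0$), and $f$ factors through some $g\colon R/P_0\to S$ since $P_0\subseteq P$; so I may also assume $R$ is a domain. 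The task has now become: for $g\colon R\to S$ a map of domains finitely generated over $k$ and $x\in I^{*}$ in $R$, find $c_0\in R$ with $g(c_0)\neq 0$ and $g(c_0)g(x)^{q}\in (IS)^{[q]}$ for all $q\gg 0$.

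At this point I would invoke the Hochster--Huneke existence of (completely stable) test elements for reduced finitely generated $k$-algebras. If some test element $c_0$ of $R$ is \emph{not} in $\ker g$, the proof closes immediately: $c_0x^{q}\in I^{[q]}$ for all $q$, and $g(c_0)$ is a nonzero element of the domain $S$, hence lies in $S^{\circ}$, so the first paragraph finishes. \textbf{The remaining case --- $\ker g$ contains every test element of $R$, i.e.\ $\operatorname{Spec} S$ lands entirely in the non-strongly-$F$-regular locus of $R$ --- is what I expect to be the real obstacle.} Dealing with it is precisely the content of \cite[Theorem~6.24]{HHbase} (resp.\ \cite[Theorem~2.5.5(k)]{HH-tcz} in characteristic $0$), where one passes to suitable complete local rings along $g$ and uses that a completely stable test element stays a test element there. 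For the Stanley--Reisner rings relevant to this paper this difficulty never appears: after the reductions above every ring in sight is a polynomial ring (each $R/P_0$ and each $S/\ideal{q}$ being one), so $(I\cdot R/P_0)^{*}=I\cdot R/P_0$ by Theorem~\ref{poly} and $g(x)\in g(I\cdot R/P_0)=IS=(IS)^{*}$, with nothing further to prove.
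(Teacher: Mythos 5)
The paper offers no proof of this statement: it is imported verbatim from Hochster--Huneke (\cite[Theorem 6.24]{HHbase} in characteristic $p$, \cite[Theorem 2.5.5(k)]{HH-tcz} in characteristic $0$), so there is no in-paper argument to compare yours against. On its own terms, your reduction is sound and isolates exactly the right crux: applying $f$ to a relation $cx^{q}=\sum_i r_i a_i^{q}$ costs nothing, and the entire content is producing a multiplier whose image lands in $S^{\circ}$. The passage to minimal primes of $S$ and then of $R$ via Theorem \ref{inclusion} is correct, as is the remark that an injective map of domains makes the problem vanish. However, as a proof of the theorem \emph{as stated} for arbitrary finitely generated $k$-algebras, the decisive case --- where $\ker g$ contains every admissible multiplier (e.g.\ the whole test ideal) --- is not an edge case to be waved at: it is the theorem, and you dispose of it only by citing the same two references the paper already cites for the full statement. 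The genuine Hochster--Huneke argument there requires completely stable test elements together with a localization-and-completion step along $g$, and nothing in your outline substitutes for it; so the proposal is a correct reduction plus a citation, not a self-contained proof.

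What redeems it is the closing observation, which is correct and worth making explicit: every invocation of Theorem \ref{persist} in this paper is for a map of Stanley--Reisner rings (or a ring isomorphism, where persistence is trivial), and after your two reductions both $R/P_0$ and $S/\mathfrak{q}$ are polynomial rings, so Theorem \ref{poly} gives $\bar x\in I\cdot R/P_0$ and hence $g(\bar x)\in I\cdot S/\mathfrak{q}\subseteq (I\cdot S/\mathfrak{q})^{*}$ with no test elements needed. That is a complete, elementary proof of the special case the paper actually uses, and it is more than the paper itself supplies.
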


As a corollary to Theorem \ref{persist}, we get the following result which will be useful in Section \ref{bds}:

\begin{corollary}
\label{persistcor}
When $R$ is finitely generated over a field $k$, $f:R\rightarrow S$ a ring isomorphism, and $I$ an ideal of $R$, then $f(I^{*})=f(I)^*$.
\end{corollary}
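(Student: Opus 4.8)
The plan is to apply Theorem~\ref{persist} twice: once to $f$ and once to its inverse $f^{-1}$. The first step is to observe that since $f$ is a ring isomorphism, $S\cong R$ is itself a finitely generated algebra over $k$, and $f^{-1}\colon S\to R$ is again a map of finitely generated $k$-algebras; hence Theorem~\ref{persist} is available in both directions. I would also record two elementary consequences of $f$ being a bijection: for any ideal $I\subseteq R$ the extended ideal $f(I)S$ equals the image set $f(I)$, and $f^{-1}(f(I))=I$; likewise $f(f^{-1}(A))=A$ for any subset $A\subseteq S$.

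Applying Theorem~\ref{persist} to $f$ with the ideal $I$ gives immediately $f(I^*)\subseteq (f(I)S)^*=f(I)^*$, which is one of the two inclusions. For the reverse inclusion I would apply the same theorem to $g=f^{-1}\colon S\to R$ with the ideal $J=f(I)$ of $S$, obtaining $f^{-1}(J^*)=g(J^*)\subseteq (g(J)R)^*=\bigl(f^{-1}(f(I))\bigr)^*=I^*$. Since $f$ is a bijection, applying $f$ to both sides and using $f(f^{-1}(A))=A$ yields $f(I)^*=f\bigl(f^{-1}(f(I)^*)\bigr)\subseteq f(I^*)$. Combining the two inclusions gives $f(I^*)=f(I)^*$.

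I do not expect a genuine obstacle here; this is essentially just symmetrizing Theorem~\ref{persist} across an isomorphism. The one point that requires a moment's care is verifying that the hypotheses of Theorem~\ref{persist} hold for $f^{-1}$ as well — in particular that $S$, and not only $R$, is finitely generated over $k$ — but this is immediate from the existence of the isomorphism. Everything else is routine bookkeeping with the bijection $f$ and the fact that for a surjection the extension of an ideal coincides with its image.
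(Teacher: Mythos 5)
Your proof is correct and follows essentially the same route as the paper: apply Theorem~\ref{persist} to $f$ for one inclusion, then to $f^{-1}$ with the ideal $f(I)$ and push back through $f$ for the reverse inclusion. Your extra care in noting that $S$ is finitely generated over $k$ and that $f(I)S=f(I)$ for a surjection is a welcome tightening of details the paper leaves implicit.
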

\begin{proof}
By Theorem \ref{persist}, $f(I^*)\subseteq f(I)^*$. Let $g:S\rightarrow R$ be the inverse of $f$. But since $S$ is also finally generated over a field, by Theorem \ref{persist} again, we get $g(f(I)^*)\subseteq g(f(I))^*=I^*$.  Apply $f$ to both sides to get $f(g(f(I^*)))\subseteq f(I^*)$. Thus $f(I)^*\subseteq f(I^*)$ which gives $f(I^*)=f(I)^*$.
\end{proof}

We also need the following simple observation:

\begin{observ}
Let $P$ be a prime ideal of $R$. Then \[P^*=P.\]
\end{observ}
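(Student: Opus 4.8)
The plan is to prove the two containments $P\subseteq P^{*}$ and $P^{*}\subseteq P$ separately; we only need the statement for $R=k[\Delta]$ a Stanley--Reisner ring, and I would use the Stanley--Reisner structure together with Theorems \ref{poly} and \ref{inclusion}. The first containment is the general and immediate fact that an ideal is contained in its tight closure: for any $x\in P$ one has $x^{p^{e}}\in P^{[p^{e}]}$, so taking $c=1\in R^{\circ}$ shows $x\in P^{*}$. All of the content lies in the reverse containment $P^{*}\subseteq P$.

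Here is how I would carry out $P^{*}\subseteq P$. Choose a minimal prime $Q_{0}$ of $R$ with $Q_{0}\subseteq P$; such a $Q_{0}$ exists because every prime ideal contains a minimal prime. Let $x\in P^{*}$. By Theorem \ref{inclusion} the image $\overline{x}$ of $x$ in $R/Q_{0}$ lies in $\bigl(P\cdot (R/Q_{0})\bigr)^{*}$ (one could equally invoke persistence, Theorem \ref{persist}, for the surjection $R\twoheadrightarrow R/Q_{0}$). Now $R/Q_{0}$ is a polynomial ring over $k$ by the structure result of \cite{BH} recalled in Section \ref{sr}, and $P\cdot (R/Q_{0})$ is an ideal of it; hence Theorem \ref{poly} gives $\bigl(P\cdot (R/Q_{0})\bigr)^{*}=P\cdot (R/Q_{0})$. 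Therefore $\overline{x}\in P/Q_{0}$, which means $x\in P+Q_{0}=P$, the last equality holding since $Q_{0}\subseteq P$. Thus $P^{*}\subseteq P$, and combining with the first containment we get $P^{*}=P$.

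The step I would be most careful about is the identification of $R/Q_{0}$ with a polynomial ring: this is exactly the description of the minimal primes of $k[\Delta]$ given in Section \ref{sr}, where the minimal primes are the $\mathfrak{B}_{F}$ and $k[\Delta]/\mathfrak{B}_{F}$ is the polynomial ring on the variables indexed by the facet $F$. I would also note, for the reader in characteristic $0$, that Theorems \ref{poly} and \ref{inclusion} were quoted in both the characteristic $p$ and the characteristic $0$ formulations, so the argument applies verbatim. (One can also read the observation as a special case of the containment of tight closure in integral closure, since prime ideals are integrally closed, which is the viewpoint of Section \ref{ic}; but the argument above needs nothing beyond what has already been set up.)
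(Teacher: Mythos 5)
Your proof is correct, but it takes a genuinely different route from the paper's, and it establishes the statement only for $R=k[\Delta]$ (which is admittedly the only case the paper ever uses). The paper's own argument is precisely the one you relegate to your final parenthetical: tight closure is contained in integral closure, prime ideals are integrally closed, hence $P\subseteq P^{*}\subseteq P^{-}=P$. That argument is two lines, uses nothing about simplicial complexes, and proves the observation for an arbitrary Noetherian ring, matching the generality with which it is stated. Your argument instead leans on the Stanley--Reisner structure: you pass to $R/Q_{0}$ for a minimal prime $Q_{0}\subseteq P$ via Theorem \ref{inclusion}, identify $R/Q_{0}$ with a polynomial ring, and apply Theorem \ref{poly}; since the extension $P\cdot(R/Q_{0})$ equals $(P+Q_{0})/Q_{0}=P/Q_{0}$, tight-closedness of ideals in the polynomial ring pulls $x$ back into $P$. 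This is sound, and it has the virtue of being self-contained relative to the two theorems already quoted in Section \ref{tc} rather than importing the external facts that $I^{*}\subseteq I^{-}$ and that primes are integrally closed. The trade-off is generality: your version would not apply verbatim to a prime in a ring whose quotients by minimal primes are not known to have all ideals tightly closed, whereas the paper's does; if you keep your route, you should flag explicitly that you are proving a weaker statement than the one displayed.
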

\begin{proof}
The tight closure of an ideal $I$ is contained in integral closure of $I$ \cite{HHmain}. All prime ideals are integrally closed by \cite[Remark 1.1.3 (4)]{HuSw-book}. If $P^-$ represents the integral closure of a prime ideal $P$, then \[P\subseteq P^*\subseteq P^-=P.\]Thus $P^*=P$.
\end{proof}

\section{$*$-reductions and $*$-spread of $\ideal{m}$}
\label{reduc}

Given ideals $J\subseteq I$, we say $J$ is a $*$-\textit{reduction} of $I$ if $I\subseteq J^*$ (equivalently $I^*=J^*$). A $*$-reduction $J$ is \textit{minimal} if for all ideals $K\subsetneq J$, $I\not\subset K^{*}$. We define the $*$\textit{-spread} of an ideal $I$ to be fewest number of generators of a minimal $*$-reduction of $I$.

If we let $\ideal{m}=(x_1,\ldots,x_n)$ in the Stanley-Reisner ring $k[\Delta]=k[x_1,\ldots,x_n]/I_{\Delta}$ we can find effective descriptions of the minimal $*$-reductions of $\ideal{m}$ as well as state exactly the $*$-spread of $\ideal{m}$.

When determining what a $*$-reduction of $\ideal{m}$ must look like, we can build the ideal by examining what conditions the ideal must meet.  It must be true that for a prime $P$ of $k[\Delta]$ and a $*$-reduction $I$ with $s$ of $\ideal{m}$ with $s$ generators that $I+P/P=\ideal{m}$.  So whatever the generators of $I$, we know the linear parts of the generators remaining in $I+P/P$ must form a system of linear equations in the variables of $k[\Delta]$ that are outside of $P$. When considering all minimal primes of a Stanley-Reisner ring in $n$ variables, the coefficients of the linear terms form an $s\times n$ matrix of values representing a linear system such that for each specific minimal prime $P$, the columns of the matrix that represent a variable outside of $P$ must on their own form a matrix that can be converted to reduced row echelon form with a leading entry in each column.

For an example consider the ring $\mathbb{R}[x,y,z]/(xz)$ and the ideal $I=(x+y+2z,x+2y+z)$. It is not to hard to check that $I$ is a $*$-reduction of $\ideal{m}$.  The minimal primes of this ring are $P=(x)$ and $Q=(z)$.  If we focus on $P$ we get a matrix that reduces as follows:\[\begin{pmatrix}
1 & 2\\
2 & 1
\end{pmatrix}
\rightarrow
\begin{pmatrix}
1 & 2\\
0 & -3
\end{pmatrix}
\rightarrow
\begin{pmatrix}
1 & 2\\
0 & 1
\end{pmatrix}
\rightarrow
\begin{pmatrix}
1 & 0\\
0 & 1
\end{pmatrix}\]where the first column is for the variable $y$ and the second column is for $z$. Similarly, for $Q$ the matrix reduces as such\[\begin{pmatrix}
1 & 1\\
1 & 2
\end{pmatrix}
\rightarrow
\begin{pmatrix}
1 & 1\\
0 & 1
\end{pmatrix}
\rightarrow
\begin{pmatrix}
1 & 0\\
0 & 1
\end{pmatrix}\] for $x$ in the first column and $y$ in the second column.

One of the things we will do often is to apply these same row reductions the matrix of coefficients of the linear terms without reducing the number of columns. This will provide alternate generating sets for the same ideal $I$. If we continue with the previous example, for the ideal $P$, \[\begin{pmatrix}
1 & 1 & 2\\
1 & 2 & 1
\end{pmatrix}
\rightarrow
\begin{pmatrix}
1 & 1 & 2\\
-1 & 0 & -3
\end{pmatrix}
\rightarrow
\begin{pmatrix}
1 & 1 & 2\\
\frac{1}{3} & 0 & 1
\end{pmatrix}
\rightarrow
\begin{pmatrix}
\frac{1}{3} & 1 & 0\\
\frac{1}{3} & 0 & 1
\end{pmatrix}
\]and for $Q$ \[\begin{pmatrix}
1 & 1 & 2\\
1 & 2 & 1
\end{pmatrix}
\rightarrow
\begin{pmatrix}
1 & 1 & 2\\
0 & 1 & -1
\end{pmatrix}
\rightarrow
\begin{pmatrix}
1 & 0 & 3\\
0 & 1 & -1
\end{pmatrix}\]which provides two alternate ways to generated $I$, specifically $(\frac{x}{3}+y,\frac{x}{3}+z)$ and $(x+3z,y-z)$ respectively. 

For the reasons outlined in the example, we will often choose to generate a $*$-reduction $I=(f_1,\ldots,f_s)$ of $\ideal{m}$ ,with respect to a chosen prime $P$, by the set of polynomials $\{x_1+g_1, \ldots, x_r+g_r, g_{r+1},\ldots, g_s\}$ where (after relabeling) $x_1,\ldots,x_r$ are the variables outside of $P$ and $g_i$ for $1\leq i\leq s$ are polynomials that exist in $P$. We will refer to this process as \textit{diagonalization}.

Before we examine the generators of the $*$-reductions of $\ideal{m}$ any farther, let us first determine the number of generators a minimally generated $*$-reduction must have.

\begin{observ}
\label{observ}
The $*\spread$ of $\ideal{m}$ is at least $d=\dim k[\Delta]$.
\end{observ}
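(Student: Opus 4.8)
The plan is to restrict $I$ modulo a well-chosen minimal prime of $k[\Delta]$ and compare heights. Let $I=(f_1,\dots,f_s)$ be an arbitrary $*$-reduction of $\ideal m$; the goal is to show $s\ge d$. Since $I\subseteq\ideal m$ and $\ideal m\subseteq I^{*}$, Theorem \ref{inclusion} says that for every minimal prime $P$ of $k[\Delta]$ the image of each element of $\ideal m$ lies in $(I(k[\Delta]/P))^{*}$. But $k[\Delta]/P$ is a polynomial ring over $k$, so Theorem \ref{poly} gives $(I(k[\Delta]/P))^{*}=I(k[\Delta]/P)$; combining this with the obvious inclusion $I(k[\Delta]/P)\subseteq\ideal m(k[\Delta]/P)$ coming from $I\subseteq\ideal m$ yields $I(k[\Delta]/P)=\ideal m(k[\Delta]/P)$ for \emph{every} minimal prime $P$.

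Now I would pick $P$ to make this image ideal as large as possible. By the structure result recalled above (\cite[Theorem 5.1.4]{BH}), the minimal primes of $k[\Delta]$ are the ideals $\mathfrak{B}_{F}$ indexed by the facets $F$ of $\Delta$, and $k[\Delta]/\mathfrak{B}_{F}$ is a polynomial ring in $|F|$ variables in which the image of $\ideal m$ is the irrelevant maximal ideal. Since $\dim k[\Delta]=\dim\Delta+1=d$, there is a facet $F$ with $|F|=d$; set $P=\mathfrak{B}_{F}$. Then, by the previous paragraph, $I(k[\Delta]/P)=\ideal m(k[\Delta]/P)$ is the irrelevant maximal ideal of a polynomial ring in $d$ variables, hence a proper ideal of height $d$.

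Finally, $I(k[\Delta]/P)$ is generated by the images of $f_1,\dots,f_s$, i.e.\ by at most $s$ elements, so Krull's height theorem forces $s\ge d$ (equivalently, localizing $k[\Delta]/P$ at this maximal ideal produces a regular local ring of dimension $d$, whose maximal ideal cannot be generated by fewer than $d$ elements by Nakayama). Thus every $*$-reduction of $\ideal m$, and in particular every minimal one, has at least $d$ generators, so $*\spread\ideal m\ge d$. I expect the one genuinely substantive step to be the first paragraph --- reading, through Theorems \ref{inclusion} and \ref{poly}, the condition ``$I$ is a $*$-reduction of $\ideal m$'' as ``$I$ and $\ideal m$ agree modulo every minimal prime'' --- after which choosing a facet of maximal cardinality and invoking the height bound is routine.
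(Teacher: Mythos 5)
Your proof is correct, but it takes a different route from the paper's. The paper argues entirely upstairs in $k[\Delta]$: if $P$ is any prime containing a $*$-reduction $I$, then $\ideal{m}=I^{*}\subseteq P^{*}=P$ (using the observation that prime ideals are tightly closed), so $\ideal{m}$ is the only prime containing $I$; hence $I$ has height $d$, and Krull's height theorem gives at least $d$ generators. You instead pass to the quotient $k[\Delta]/\mathfrak{B}_{F}$ for a facet $F$ of maximal cardinality $d$, use Theorems \ref{inclusion} and \ref{poly} to see that the image of $I$ there is the full irrelevant maximal ideal of a polynomial ring in $d$ variables, and then apply the height bound (or Nakayama after localizing) in that polynomial ring. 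The two arguments rely on different key inputs --- the paper needs only that primes are tightly closed, while yours needs the description of the minimal primes of $k[\Delta]$ together with the triviality of tight closure in polynomial rings --- but both terminate in the same application of Krull's height theorem to an ideal of height $d$ generated by $s$ elements. Your reduction-modulo-a-minimal-prime step is exactly the mechanism the paper deploys elsewhere (e.g.\ in Theorems \ref{ideal} and \ref{spreaddn}), so nothing in your argument is foreign to the paper's toolkit; it is simply a slightly longer path to the same height computation, with the mild bonus that it identifies concretely which quotient witnesses the height.
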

\begin{proof}
Let $I$ be a $*$-reduction of $\ideal{m}$ in $k[\Delta]$ and let $d=\dim k[\Delta]$.  Suppose $P$ is a prime of $k[\Delta]$ such that $I\subseteq P$.  Then $\ideal{m}=I^{*}\subseteq P^{*}=P$.   Since $\ideal{m}$ is maximal, $\ideal{m}=P$. Thus $\ideal{m}$ is the only prime containing $I$.  The height of $I$ is therefore $d$.  By Krull's height theorem \cite[Theorem A.2.]{BH}, $I$ has at least $d$ generators.
\end{proof}
\begin{theorem}
\label{ideal}
The $*\spread$ of $\ideal{m}$ in $k[\Delta_{d,n}]$ is $d$.
\end{theorem}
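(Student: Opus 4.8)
Observation~\ref{observ} already gives that the $*\spread$ of $\ideal{m}$ in $k[\Delta_{d,n}]$ is at least $d=\dim k[\Delta_{d,n}]=\dim\Delta_{d,n}+1$, so the plan is to exhibit a $*$-reduction of $\ideal{m}$ generated by exactly $d$ linear forms. First I would record the structure of $k[\Delta_{d,n}]$: since every facet of $\Delta_{d,n}$ is a $d$-element subset $F$ of $V=\{v_1,\dots,v_n\}$ and every such subset is a facet, the minimal primes of $k[\Delta_{d,n}]$ are precisely the ideals $\mathfrak{B}_F=(x_i:v_i\notin F)$, and each quotient $k[\Delta_{d,n}]/\mathfrak{B}_F$ is a polynomial ring in the $d$ variables $\{x_i:v_i\in F\}$.

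The key step is to translate ``$I=(f_1,\dots,f_d)$ is a $*$-reduction of $\ideal{m}$'' into linear algebra. Writing $f_j=\sum_{i=1}^{n}a_{ji}x_i$, the image of $I$ in $k[\Delta_{d,n}]/\mathfrak{B}_F$ is generated by the $d$ linear forms $\sum_{v_i\in F}a_{ji}x_i$; since the maximal ideal of a polynomial ring in $d$ variables is generated by $d$ given linear forms exactly when those forms are linearly independent, Theorems~\ref{poly} and~\ref{inclusion} reduce the problem to finding a $d\times n$ matrix $(a_{ji})$ over $k$ all of whose $d\times d$ submatrices on $d$ of the $n$ columns are invertible. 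Such a matrix exists because $k$ is infinite: pick distinct $t_1,\dots,t_n\in k$ and set $a_{ji}=t_i^{\,j-1}$, so that any $d$ of its columns form a Vandermonde matrix with distinct nodes, hence an invertible one. For the resulting $I=(f_1,\dots,f_d)$ with $f_j=\sum_{i=1}^{n}t_i^{\,j-1}x_i$, we then have $I\,k[\Delta_{d,n}]/\mathfrak{B}_F=\ideal{m}\,k[\Delta_{d,n}]/\mathfrak{B}_F$ for every minimal prime, so $\ideal{m}\subseteq I^{*}$ by Theorems~\ref{poly} and~\ref{inclusion}, while $I\subseteq\ideal{m}$ forces $I^{*}\subseteq\ideal{m}^{*}=\ideal{m}$; hence $I^{*}=\ideal{m}$, so the $*\spread$ of $\ideal{m}$ is at most $d$.

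It remains to confirm that this $I$ is a genuine \emph{minimal} $*$-reduction, not merely a $*$-reduction with the least possible number of generators; this is the step I expect to require the most care. I would argue through integral closure: since tight closure is contained in integral closure, $I^{*}=\ideal{m}$ yields $\ideal{m}\subseteq\overline{I}\subseteq\overline{\ideal{m}}=\ideal{m}$, so $I$ is an ordinary reduction of $\ideal{m}$; because $k[\Delta_{d,n}]$ is standard graded over the infinite field $k$, the analytic spread of $\ideal{m}$ equals $\dim k[\Delta_{d,n}]=d=\mu(I)$, so $I$ is a \emph{minimal} reduction of $\ideal{m}$ \cite{HuSw-book}. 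Hence no proper sub-ideal of $I$ is a reduction of $\ideal{m}$, so none is a $*$-reduction of $\ideal{m}$ either, and $I$ is a minimal $*$-reduction. Together with Observation~\ref{observ} this gives that the $*\spread$ of $\ideal{m}$ is $d$; the same reasoning, read with ``integral closure'' in place of ``tight closure,'' also applies in the context of Section~\ref{ic}.
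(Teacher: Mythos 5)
Your proof is correct, and its skeleton matches the paper's: both use Observation~\ref{observ} for the lower bound and both reduce the upper bound to producing a $d\times n$ matrix of coefficients all of whose $d\times d$ column-submatrices are invertible, via Theorems~\ref{poly} and~\ref{inclusion} and the fact that each $k[\Delta_{d,n}]/\mathfrak{B}_F$ is a polynomial ring in $d$ variables. Where you diverge is in how the matrix is obtained and in what you choose to verify afterward. The paper argues generically: the product $\gamma$ of all $\binom{n}{d}$ maximal minors is a nonzero polynomial in the $nd$ entries, so over an infinite field almost every choice of coefficients works; this buys the (useful later) fact that \emph{generic} linear ideals are $*$-reductions. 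You instead give an explicit witness --- the Vandermonde matrix $a_{ji}=t_i^{\,j-1}$ with distinct $t_i\in k$ --- which is constructive and avoids invoking \cite[Proposition 1.3]{Kunz-book}, at the cost of not saying anything about genericity. You also add a step the paper silently skips: since the $*\spread$ is defined as the least number of generators of a \emph{minimal} $*$-reduction, one should check that the $d$-generated $*$-reduction is in fact minimal (a priori it could properly contain another $*$-reduction, which by Observation~\ref{observ} would still need $d$ generators, so no immediate contradiction arises from counting alone). Your route through integral closure --- $I$ is a reduction of $\ideal{m}$, the analytic spread of $\ideal{m}$ equals $d=\mu(I)$, hence $I$ is a minimal reduction and a fortiori a minimal $*$-reduction --- closes this gap legitimately, and is consistent with the discussion in Section~\ref{ic}. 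So your argument is, if anything, more complete than the paper's on this point.
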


\begin{proof}
For almost every choice of coefficients $a_{(i,j)}\in k$, we want to show that $I=(f_1,\ldots, f_d)$ with 
\begin{align*}
f_1 &= a_{(1,1)}x_1+\cdots +a_{(1,n)}x_n\\
f_2 &= a_{(2,1)}x_1+\cdots +a_{(2,n)}x_n\\
&\ \ \vdots\\
f_d &= a_{(d,1)}x_1+\cdots +a_{(d,n)}x_n
\end{align*}
satisfies $I^*=\ideal{m}$.  Let $X$ be the set of variables of $k[\Delta_{d,n}]$. The ideal $I$ is a $*$-reduction of $\ideal{m}$ if $IR/P \cong (x_{i_1},\ldots,x_{i_d})$ in the ring $k[x_{i_1},\ldots, x_{i_d}]$ for any minimal prime $P=(X-\{x_{i_1},\ldots, x_{i_d}\})$ of $R$.  Therefore if we have $d$ generators of $I$, for any choice of $d$ of the variables, we get a square matrix with the first column corresponding to the coefficients on $x_{i_1}$ across the $d$ linear polynomial generators of $I$.  Due to our discussion of diagonalization, the desired condition is equivalent to all such matrices being nonsingular.
 
Let $\gamma_{j_{1},\ldots,j_{d}}$ be the determinant of the matrix of coefficients for columns $j_1,\ldots,j_{d}$.  These are all nonzero if and only if \[\gamma=\prod\gamma_{j_1,\ldots,j_d}\neq 0.\]The product $\gamma$ is a polynomial in $n\cdot d$ variables and its solution cuts out a hypersurface in affine $n\times d$ space.  Therefore by \cite[Proposition 1.3.]{Kunz-book}, almost any choice of values for $a_{(i,j)}$ will give us the desired ideal $I$, so that in particular, such $I$ do exist.
\end{proof}

Interestingly, we can use the ideals constructed in Theorem \ref{ideal} to find the $*$-spread of $\ideal{m}$ in an arbitrary Stanley-Reisner ring:

\begin{theorem}
\label{spreaddn}
Let $\Delta$ be a proper $d-1$ dimensional simplicial complex on $n$ vertices. Then $*$-spread of $\ideal{m}$ in $k[\Delta]$ is $d$. That is, \[*\spread\ideal{m}=\dim k[\Delta]=\dim\Delta+1.\]
\end{theorem}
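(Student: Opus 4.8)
The plan is to read off the lower bound from Observation \ref{observ} and to get the matching upper bound by recycling the generic linear ideals constructed in Theorem \ref{ideal}. Observation \ref{observ} already gives $*\spread\ideal{m}\ge\dim k[\Delta]$, and the characterization of minimal primes recalled in Section \ref{sr} gives $\dim k[\Delta]=\dim\Delta+1=d$; so the whole content is to exhibit \emph{one} $*$-reduction of $\ideal{m}$ with exactly $d$ generators.

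The first step is the embedding observation $\Delta\subseteq\Delta_{d,n}$. Since $\dim\Delta=d-1$, every face of $\Delta$ has at most $d$ vertices; and since $\Delta$ is proper on $n$ vertices we must have $d<n$ (otherwise $V$ itself would be a face and $\Delta$ a simplex), so $\Delta_{d,n}$ is legitimately defined and its faces are precisely the subsets of $V$ of size $\le d$. Concretely the only thing I will use is that every facet $F$ of $\Delta$, which may well have fewer than $d$ vertices because $\Delta$ need not be pure, can be enlarged to a $d$-element subset $F'\subseteq V$.

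Next I would take $I=(f_1,\dots,f_d)$ with $f_i=a_{(i,1)}x_1+\cdots+a_{(i,n)}x_n$ exactly as in Theorem \ref{ideal}, with the coefficients chosen generically (via Kunz's Proposition, as in that proof) so that \emph{every} $d\times d$ minor of $(a_{(i,j)})$ is nonzero. The minimal primes of $k[\Delta]$ are the $\ideal{B}_F=(x_i:v_i\notin F)$ over facets $F$, and $k[\Delta]/\ideal{B}_F$ is the polynomial ring on $\{x_i:v_i\in F\}$. For a facet $F$ of size $e\le d$, pick $F'\supseteq F$ with $|F'|=d$: the $d\times d$ submatrix of $(a_{(i,j)})$ on the columns of $F'$ is nonsingular, hence its sub-block on the columns of $F$ has rank $e$, so the images $\overline{f_1},\dots,\overline{f_d}$ span the whole space of linear forms of $k[\Delta]/\ideal{B}_F$ and therefore $I\,k[\Delta]/\ideal{B}_F$ is the full homogeneous maximal ideal of that polynomial ring. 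By Theorem \ref{poly} that ideal is tightly closed, so by Theorem \ref{inclusion} every $x\in\ideal{m}$ lies in $I^*$; combined with $I\subseteq\ideal{m}$ and $\ideal{m}^*=\ideal{m}$ (the observation that primes are tightly closed), this gives $I^*=\ideal{m}$. Thus $I$ is a $*$-reduction of $\ideal{m}$ with $d$ generators, and with Observation \ref{observ} we conclude $*\spread\ideal{m}=d=\dim k[\Delta]=\dim\Delta+1$.

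I do not expect a hard computation anywhere: the genericity is handled verbatim as in Theorem \ref{ideal}, and the only new ingredients are the bookkeeping point that a nonsingular $d\times d$ matrix has every subset of columns independent, together with the reduction of non-pure complexes to $\Delta_{d,n}$ by enlarging facets. The main thing to be careful about is precisely that last point — facets of $\Delta$ can be smaller than $d$, so one must not conflate "pure of dimension $d-1$" with "dimension $d-1$." An alternative that sidesteps the column-counting entirely is to use persistence: $\Delta\subseteq\Delta_{d,n}$ yields a surjection $f\colon k[\Delta_{d,n}]\twoheadrightarrow k[\Delta]$ carrying the graded maximal ideal onto the graded maximal ideal, so if $J$ is the $d$-generated $*$-reduction of $\ideal{m}$ in $k[\Delta_{d,n}]$ produced by Theorem \ref{ideal}, then Theorem \ref{persist} gives $\ideal{m}=f(J^*)\subseteq f(J)^*$, whence $f(J)$ is a $*$-reduction of $\ideal{m}$ in $k[\Delta]$ generated by at most $d$ elements; Observation \ref{observ} then forces exactly $d$, and hence $*\spread\ideal{m}=d$.
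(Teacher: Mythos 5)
Your proposal is correct, and your ``alternative'' at the end is in fact precisely the paper's proof: the paper establishes the upper bound by taking the $d$-generated ideal $I$ from Theorem \ref{ideal} in $k[\Delta_{d,n}]$ and pushing it through the natural surjection $\varphi\colon k[\Delta_{d,n}]\twoheadrightarrow k[\Delta]$, using Theorem \ref{persist} to get $\ideal{m}=\varphi(\ideal{n})=\varphi(I^*)\subseteq\varphi(I)^*\subseteq\ideal{m}$, then invokes Observation \ref{observ} for the lower bound. Your primary route is genuinely different: rather than appealing to persistence, you verify directly in $k[\Delta]$ that a generic linear ideal works, which requires the extra (correct) observation that a facet $F$ of the possibly non-pure complex $\Delta$ may have $e<d$ vertices, and that the $d\times e$ column block of a matrix all of whose $d\times d$ minors are nonzero still has rank $e$, so the images of $f_1,\dots,f_d$ span the linear forms of $k[\Delta]/\mathfrak{B}_F$ and Theorems \ref{poly} and \ref{inclusion} finish the job. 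The persistence argument buys brevity and avoids all column bookkeeping (non-purity never has to be mentioned); your direct argument buys an explicit description of a $d$-generated $*$-reduction as a generic linear ideal in $k[\Delta]$ itself, which is closer in spirit to what the later sections (e.g.\ Lemma \ref{lemma}) actually use. Both are valid; you lose nothing by presenting either.
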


\begin{proof}
Note that $\Delta\subset \Delta_{d,n}$. Thus there exists a natural surjection \[\varphi:k[\Delta_{d,n}]\twoheadrightarrow k[\Delta].\] If we define $I$ to be as in Theorem \ref{ideal}, then $\varphi(I)$ is an ideal of $k[\Delta]$.  If $\ideal{n}$ is the maximal ideal of $k[\Delta_{d,n}]$ generated by the images of the variables, then \[\ideal{m}=\varphi(\ideal{n})=\varphi(I^*)\subseteq \varphi(I)^*\subseteq \ideal{m},\]where the first inclusion follows from Theorem \ref{persist}.  Hence $\varphi(I)^*=\ideal{m}$ which means $\ideal{m}$ has a $*$-reduction with $d$ generators.  Thus $*\spread{\ideal{m}}\leq d$.  But by Observation \ref{observ} $*\spread{\ideal{m}}\geq d$ as well.  Hence $*\spread{\ideal{m}}=d$.
\end{proof}
In the previous theorems, the $*$-reductions of $\ideal{m}$ used in the proofs were generated by linear polynomials.  It is important to note that a $*$-reduction of $\ideal{m}$, even a minimal one, may be generated by polynomials that not are linear.  For example, the ideal $J=(x+y+xz,y+z)$ in $k[x,y,z]/(xyz)$ is a $*$-reduction of $\ideal{m}$ and is not generated by linear polynomials.  If we let $I$ be an ideal in the same ring generated by the linear parts of the generators of $I$, i.e. $I=(x+y,y+z)$, we see that $I$ is also a $*$-reduction of $\ideal{m}$.  This indicates the following result.
\begin{theorem}
Let $J=(f_1,\ldots,f_s)$ be a $*$-reduction of $\ideal{m}$ in $k[\Delta]$ for $s\geq d=\dim k[\Delta]$. Let $f_i=g_i+h_i$ for $1\leq i\leq s$ where $g_i$ is the polynomial of linear summands of $f_i$ and $h_i$ is the polynomial of nonlinear summands of $f_i$.  If $I=(g_1,\ldots, g_s)$, then $I^*=\ideal{m}$.
\end{theorem}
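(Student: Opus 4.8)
The plan is to work one minimal prime of $k[\Delta]$ at a time, using Theorems \ref{inclusion} and \ref{poly}, and then to compare $I$ and $J$ modulo the square of the maximal ideal. Recall from \cite[Theorem 5.1.4]{BH} that every minimal prime $P$ of $k[\Delta]$ equals $\mathfrak{B}_F$ for some facet $F$, so $S_P:=k[\Delta]/P$ is a standard-graded polynomial ring on the variables $\{x_i:v_i\in F\}$, the surjection $k[\Delta]\twoheadrightarrow S_P$ is graded, and the image of $\ideal{m}$ is the homogeneous maximal ideal $\mathfrak{n}_P=(S_P)_{+}$; moreover the preimage of $\mathfrak{n}_P$ in $k[\Delta]$ is exactly $\ideal{m}$. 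By Theorem \ref{poly} every ideal of $S_P$ is tightly closed.

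First I would translate the hypothesis and the goal into statements about the rings $S_P$. Since $J\subseteq\ideal{m}$ and $\ideal{m}=J^*$, Theorem \ref{inclusion} together with Theorem \ref{poly} forces $\mathfrak{n}_P\subseteq (JS_P)^*=JS_P\subseteq\mathfrak{n}_P$, i.e. $JS_P=\mathfrak{n}_P$ for every minimal prime $P$. Symmetrically, $I\subseteq\ideal{m}$, and if I can show $IS_P=\mathfrak{n}_P$ for every $P$ then $(IS_P)^*=\mathfrak{n}_P$, and Theorem \ref{inclusion} identifies $I^*$ with the intersection over all minimal primes $P$ of the preimage of $\mathfrak{n}_P$, which is $\ideal{m}$. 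So the whole problem reduces to showing $IS_P=\mathfrak{n}_P$ for each minimal prime $P$.

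I would then fix such a $P$ and write $\overline{\,\cdot\,}$ for images in $S_P$. Because the quotient map is graded and $g_i$, $h_i$ are the degree-one part and the part of degree $\ge 2$ of $f_i$, we have $\overline{f_i}=\overline{g_i}+\overline{h_i}$ with $\overline{g_i}$ a linear form of $S_P$ and $\overline{h_i}\in\mathfrak{n}_P^{\,2}$. Hence
\[
\mathfrak{n}_P=(\overline{f_1},\dots,\overline{f_s})\subseteq(\overline{g_1},\dots,\overline{g_s})+\mathfrak{n}_P^{\,2}.
\]
Comparing degree-one graded components of these homogeneous ideals — that of $\mathfrak{n}_P^{\,2}$ is $0$, that of $(\overline{g_1},\dots,\overline{g_s})$ is the $k$-span of the $\overline{g_i}$, and that of $\mathfrak{n}_P$ is all of $(S_P)_1$ — shows the linear forms $\overline{g_1},\dots,\overline{g_s}$ span $(S_P)_1$. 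Thus the ideal $(\overline{g_1},\dots,\overline{g_s})$ contains $(S_P)_1$ and hence all of $\mathfrak{n}_P$; since it is also contained in $\mathfrak{n}_P$ (each $g_i\in\ideal{m}$), it equals $\mathfrak{n}_P$. As $P$ was arbitrary, $I^*=\ideal{m}$.

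Given the machinery of Section \ref{tc} the argument is essentially bookkeeping; the one step that really uses the Stanley–Reisner structure is the assertion $\overline{h_i}\in\mathfrak{n}_P^{\,2}$ — equivalently, that reducing modulo a minimal prime of $k[\Delta]$ (which is generated by variables) cannot lower the degree of a monomial to $1$ without killing it outright. That is why I would be explicit that each $\mathfrak{B}_F$ is a monomial prime and keep everything in the graded category, working with the graded maximal ideal $\mathfrak{n}_P$ in place of a localization.
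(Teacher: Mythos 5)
Your proof is correct and follows essentially the same route as the paper: reduce modulo each minimal prime (where Theorems \ref{poly} and \ref{inclusion} turn the problem into one about polynomial rings) and then compare degree-one parts to see that the linear forms $\overline{g_i}$ already generate the maximal ideal of each quotient. The paper carries out the degree comparison by explicitly splitting each coefficient $a_i$ into its constant term plus higher-order part, which is just a concrete rendering of your graded-component argument.
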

\begin{proof}
Let $P$ be a minimal prime of $k[\Delta]$. Then $J+P/P=\ideal{m}/P$.  Let $\overline{f}_i$ be the image of $f_i$ in $J+P/P$ and let $\overline{x}_j$ be the image of $x_j$ in $\ideal{m}/P$. Then there exist polynomials $a_1, \ldots, a_s$ in $k[\Delta]/P$ such that \[\overline{x}_j=a_1\overline{f}_1+\cdots+a_s\overline{f}_s\] in $J+P/P$. Let each $a_i=c_i+b_i$ where $c_i$ is the constant term of $a_i$ and $b_i$ is the sum of every other term. Then \[\overline{x}_j=\sum c_i\overline{g}_i+\sum b_i\overline{g}_i+\sum a_i\overline{h}_i\] and since all the terms of $\sum b_i\overline{g}_i+\sum a_i\overline{h}_i$ are of degree greater than one, it must be true that $\sum b_i\overline{g}_i+\sum a_i\overline{h}_i=0$.  Thus for all minimal primes $P$ of $k[\Delta]$, \[I+P/P=J+P/P=\ideal{m}/P\] and $I^*=\ideal{m}$.
\end{proof}

Even though the $*$-spread of $\ideal{m}$ is $d$, there do exist $*$-reductions with more than $d$ generators.  Such a reduction, however, is never minimal. Using iterations of the following theorem we can show that given a $*$-reduction $J$ of $\ideal{m}$ with more than $d$ generators, we can find an ideal $I\subset J$ that is a $*$-reduction of $\ideal{m}$ with $d$ generators. The following result is analogous to a result of Epstein \cite[Theorem 5.1]{nme*spread}, though his result his result is for excellent analytically irreducible local domains of characteristic $p>0$ and the method of proof is different.
\begin{theorem}
Let $d=\dim k[\Delta]$ and let $J=(f_1,\ldots,f_c)$ with $c\geq d+1$. If $J^*=\ideal{m}$, then there exists $I=(g_1,\ldots,g_{c-1})$ such that $I\subset J$ and $I^*=\ideal{m}$. 
\end{theorem}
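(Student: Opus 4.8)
The plan is to convert $J^{*}=\ideal{m}$ into a condition modulo each minimal prime and then excise one generator by a prime‑avoidance argument. By Theorem~\ref{poly} each quotient $k[\Delta]/P$ by a minimal prime $P=\mathfrak{B}_{F}$ is a polynomial ring, so $(Jk[\Delta]/P)^{*}=Jk[\Delta]/P$; combined with Theorem~\ref{inclusion} this says that $J^{*}=\ideal{m}$ is equivalent to the statement that for every minimal prime $P=\mathfrak{B}_{F}$ the images $\overline{f}_{1},\dots,\overline{f}_{c}$ generate $\ideal{m}k[\Delta]/P$, i.e.\ the maximal ideal $(x_{i}:v_{i}\in F)$ of the polynomial ring $k[\Delta]/P\cong k[x_{i}:v_{i}\in F]$. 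For the same reason, any ideal $I$ with $I\subseteq J$ and $Ik[\Delta]/P=\ideal{m}k[\Delta]/P$ for all minimal primes $P$ automatically has $I^{*}=\ideal{m}$ (the containment $I\subseteq J\subseteq\ideal{m}$ already yields $I^{*}\subseteq\ideal{m}$). So the theorem reduces to producing $g_{1},\dots,g_{c-1}\in J$ whose images generate $(x_{i}:v_{i}\in F)$ modulo every minimal prime $P=\mathfrak{B}_{F}$.

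The numerical input is that each target ideal is minimally generated by $|F|\le\dim\Delta+1=d\le c-1$ elements, so modulo every one of the (finitely many) minimal primes the $c$ generators $\overline{f}_{i}$ form an inflated generating set, with at least one generator to spare. Moreover, by the preceding theorem the ideal of linear parts $(\ell_{1},\dots,\ell_{c})$ still has tight closure equal to $\ideal{m}$, so the images of the $\ell_{i}$ span the linear forms in the variables of $F$ modulo each $P$; this extra rigidity will be what makes the borderline case $c=d+1$ (where the count is tight) go through.

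To carry out the excision: if $f_{1},\dots,f_{c}$ is not a minimal generating set of $J$, take $I=J$. Otherwise, look for $g_{t}=\sum_{j=1}^{c}a_{tj}f_{j}$ ($1\le t\le c-1$) with coefficients $a_{tj}\in k[\Delta]$, so that $I=(g_{1},\dots,g_{c-1})\subseteq J$; one needs the matrix $(a_{tj})$ chosen so that the images of $g_{1},\dots,g_{c-1}$ generate $(x_{i}:v_{i}\in F)$ in $k[\Delta]/P$ for every minimal prime $P=\mathfrak{B}_{F}$ simultaneously. For any one such $P$ a suitable matrix exists --- this is an Eisenbud--Evans style basic‑element reduction of the inflated, linear‑part‑spanning generating set $\overline{f}_{1},\dots,\overline{f}_{c}$ of $(x_{i}:v_{i}\in F)$. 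The real work is to meet all the minimal primes at once, which I would do one prime at a time: having treated some of the minimal primes, adjust the coefficients by corrections drawn from the intersection of the primes already handled, which vanish modulo those primes; such corrections are available and nonzero modulo the prime currently under treatment because distinct minimal primes of $k[\Delta]$ are incomparable, so no intersection of some of them is contained in another. Iterating the passage from $c$ to $c-1$ generators yields the theorem.

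I expect the main obstacle to be exactly this simultaneity over all minimal primes. In a domain --- one minimal prime, the setting of Epstein's theorem --- a general $k$‑linear combination of the $f_{i}$ does the job; but with several minimal primes, general combinations fail, since the higher‑degree parts of the $f_{i}$ that must cancel modulo one minimal prime impose non‑generic constraints on the coefficients, and these constraints can be mutually incompatible (simple $k[\Delta]$ examples show that neither a generic linear combination nor even the naive modification $f_{t}\mapsto f_{t}+a_{t}f_{c}$ suffices). Pushing the argument through therefore requires coefficients in $k[\Delta]$, the prime‑by‑prime bookkeeping above, and --- in the tight case $c=d+1$ --- genuine use of the fact that the $f_{i}$ have spanning linear parts modulo each minimal prime.
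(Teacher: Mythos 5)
Your reduction of the problem is the same as the paper's: by Theorems~\ref{poly} and~\ref{inclusion}, everything comes down to finding $c-1$ elements of $J$ whose images generate $\ideal{m}k[\Delta]/P$ for every minimal prime $P$ simultaneously, and you correctly identify the simultaneity as the crux. But you then leave exactly that crux unproved, and the one concrete mechanism you offer for it does not work. You propose to treat the primes one at a time, perturbing the coefficients $a_{tj}$ by elements of $\bigcap_{i\le r}P_i$ so as not to disturb the primes already handled. The problem is that such perturbations can never repair the new prime $Q$: since $\bigcap_{i\le r}P_i\subseteq\ideal{m}$ and each $f_j\in\ideal{m}$, the perturbation $\sum_j\epsilon_{tj}f_j$ has no linear part, so the degree-one parts of the $g_t$ modulo $Q$ are unchanged --- and if those fail to span the linear forms of $k[\Delta]/Q$ (which is exactly how a candidate typically fails), no such correction helps. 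Already in $k[x,y]/(xy)$ with $J=(x+y^2,\,y)$, handling $(y)$ first gives the candidate $x+y^2$, and no perturbation $(1+\epsilon_1)(x+y^2)+\epsilon_2y$ with $\epsilon_i\in(y)$ generates $(y)$ modulo $(x)$, since $(1+\bar\epsilon_1)y+\bar\epsilon_2$ has zero constant term; the element that does work, $x+cy=1\cdot(x+y^2)+(c-y)\cdot y$, uses a coefficient $c-y\notin(y)$. That ``nonzero modulo $Q$'' is all you extract from incomparability of minimal primes is a symptom: the image of $\bigcap_{i\le r}P_i$ in $k[\Delta]/Q$ is a nonzero but proper ideal, which is far too little room.

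The paper's proof closes this gap with a different device. For a prime $Q$ where $(f_1,\dots,f_{c-1})$ fails, it first rewrites the generators of $J$ adapted to $Q$ (the ``diagonalization'' of Section~\ref{reduc}), so that the omitted generator becomes $x_t+f_c'$, supplying the single variable $x_t$ that the other $c-1$ generators miss modulo $Q$, while one retained generator $f_t'$ has linear part inside $Q$. It then replaces $f_t'$ by $\alpha(x_t+f_c')+f_t'$ with $\alpha\in k$ a \emph{scalar}: this fixes $Q$ for every $\alpha\neq0$, and for each previously good prime at most one value of $\alpha$ is ruled out, so the infinitude of $k$ yields a simultaneous choice; iterating over the bad primes finishes the step from $c$ to $c-1$. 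Note that the ring-element coefficients you insist on are hidden in the change of generating set, not in the final one-parameter modification, and that the counting argument (at most one bad $\alpha$ per prime) is precisely the quantitative control your sketch is missing. To salvage your plan you would need either this adapted presentation or some substitute argument showing that corrections from $J\cap\bigcap_{i\le r}P_i$ surject onto enough of $\ideal{m}/(\ideal{m}^2+Q)$; as written, the proposal is a correct framing with the proof of its hardest step absent.
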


\begin{proof}
Let $P_1,\ldots,P_s$ be the minimal primes of $k[\Delta]$ ordered such that for $r\leq s$, if $P_i\in\{P_1,\ldots,P_r\}$ and $J$ as defined above, \[(f_1,\ldots,f_{c-1})k[\Delta]/P_i\equiv J k[\Delta]/P_i \equiv  \ideal{m}k[\Delta]/P_i\] and if $P_i\in\{P_{r+1},\ldots,P_s\}$, the above is not true.  Let $Q=(x_{t+1},\ldots, x_n)\in\{P_{r+1},\ldots,P_s\}$. Then there exists $f_1',\ldots,f_{c}'\in k[\Delta]$ such that the linear part of each of these polynomials only include variables from $\{x_{t+1},\ldots,x_n\}$ such that \[J=(x_1+a_1x_t+f_1',\ldots, x_{t-1}+a_{t-1}x_t+f_{t-1}',f_t',\ldots, f_{c-1}',x_t+f_c')\] and \[(f_1,\ldots,f_{c-1})=(x_1+a_1x_t+f_1',\ldots, x_{t-1}+a_{t-1}x_t+f_{t-1}',f_t',\ldots, f_{c-1}')\] for $a_1,\ldots,a_t\in k$. Then setting $g_i=a_ix_t+f_i',1\leq i\leq t-1$, we have \[(x_1+g_1,\ldots, x_{t-1}+g_{t-1},\alpha (x_t+f_c')+f_t',\ldots, f_{c-1}')k[\Delta]/Q\equiv \ideal{m}k[\Delta]/Q\] for any nonzero $\alpha\in k$.

We want to show that we can choose the above $\alpha$ in such a way that with $J'=(x_1+g_1,\ldots, x_{t-1}+g_{t-1},\alpha (x_t+f_c')+f_t',\ldots, f_{c-1}')$, \[J'k[\Delta]/P_i\cong \ideal{m}k[\Delta]/P_i\] for all $P_i\in\{P_1,\ldots,P_r\}.$

For each $P_i\in\{P_1,\ldots, P_r\}$ attempt to diagonalize the generators of $J'$ as in the beginning of the section except for $\alpha(x_t+f_c')+f_t'$, leave that generator untouched. The diagonalization of the other $c-2$ generators will include a leading term in the column for each variable outside of $P_i$ with the exception of at most one of the variables.  If every necessary column has a leading term after the diagonalization process, then \[J'k[\Delta]/P_i\cong \ideal{m}k[\Delta]/P_i\]no matter the choice of $\alpha$. Otherwise, the diagonalization misses exactly one column. In this case we need the generator $\alpha(x_t+f_c')+f_t'$ to accommodate this column. This depends on the choice of $\alpha$.

Assume that the column this generater is needed to accommodate is the one for the variable $y$ and that $\beta$ is the coefficient on $y$ and that $\alpha_j$ is the coefficient on $x_j$ in  $\alpha(x_t+f_c')+f_t'$.  Part of the diagonalization process includes performing a row operation to remove $\alpha_jx_j$ as part of this sum, which potentially alters the coefficients on the terms of $\alpha(x_t+f_c')+f_t'$, including $\beta$. It is therefore necessary for $\beta$ to not be both equal in magnitude and opposite in sign to the cumulative effect of these row operations i.e. for each $\gamma_j$, the coefficient on $y$ in the generator with leading term in the $x_j$ column, $0\neq \beta-\sum\alpha_j\gamma_j$. Since $\alpha$ only appears once in this equation as part of the construction of $\beta$, only at most one value of $\alpha$ will not work for each $P_i$. Therefore since there are $r$ minimal primes $P_i$ that we need to check, there are at most $r$ values of $\alpha$ that will not work.

Repeat this process from the beginning of the proof until $r=s$ and let final resulting $J'=I$.  Thus there exists an ideal $I\subset J$ with one less generator than $J$ such that $I^*=\ideal{m}$.
\end{proof}
\begin{corollary}
\label{subideal}
If $I$ is a $*$-reduction of $\ideal{m}$ in $k[\Delta]$ with more than $d=\dim k[\Delta]$ generators, then there exists an ideal $J\subset I$ of $k[\Delta]$ with $d$ generators such that $J^*=\ideal{m}$.
\end{corollary}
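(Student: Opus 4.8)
The plan is simply to iterate the preceding theorem. Given a $*$-reduction $I=(f_1,\ldots,f_c)$ of $\ideal{m}$ in $k[\Delta]$ with $c>d$ generators, I would argue by induction on $c-d\geq 1$. When $c-d=1$, so $c=d+1$, the preceding theorem applied directly to $I$ produces an ideal $J\subset I$ with a generating set of $c-1=d$ elements and $J^*=\ideal{m}$, which is exactly the assertion.

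For the inductive step, suppose $c-d\geq 2$. Then $c\geq d+1$, so the preceding theorem applies to $I$ and yields an ideal $I'=(g_1,\ldots,g_{c-1})$ with $I'\subset I$ and $(I')^*=\ideal{m}$. Now $I'$ is a $*$-reduction of $\ideal{m}$ with $c-1$ generators, and $(c-1)-d=(c-d)-1\geq 1$, so the induction hypothesis furnishes an ideal $J\subset I'$ with $d$ generators and $J^*=\ideal{m}$. Since containment is transitive, $J\subset I'\subset I$, which completes the induction.

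The only point worth a word of care — and it is genuinely minor — is that each application of the preceding theorem drops the size of the chosen generating set by exactly one, so the process never overshoots and terminates at a generating set of precisely $d$ elements; by Observation \ref{observ} this is also the smallest number of generators any $*$-reduction of $\ideal{m}$ can have. I do not expect any real obstacle here: all of the substantive work is carried by the preceding theorem, and this corollary merely records the effect of applying it $c-d$ times.
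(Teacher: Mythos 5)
Your proposal is correct and matches the paper's own argument, which likewise disposes of the corollary by (reverse) induction on the number of generators $c$, applying the preceding theorem repeatedly until the generating set has exactly $d$ elements. The extra care you take in noting that each step drops the count by exactly one is a reasonable elaboration of the same idea, not a different route.
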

\begin{proof}
This follows naturally by reverse induction on $c$.
\end{proof}
\section{Bounds for $*\core{\ideal{m}}$}
\label{bds}

When considering integral closure ideal, there is a notion of the core of an ideal $I$, which is the intersection of all minimal reductions $I$.  We can define an analogous notion for tight closure.  We define the $*\core$ of an ideal $I$ to be the intersection of all minimal $*$-reductions of $I$.  If we return to the ring $k[\Delta]=k[x,y]/(xy)$, we saw that $(x+y)$ was a (minimal) $*$-reduction of $\ideal{m}$.  In fact, all minimal $*$-reductions of $\ideal{m}$ in this ring are of the form $(x+\lambda y)$, where $\lambda$ is any nonzero element of $k$.  Then, \[*\core{\ideal{m}}=\bigcap\limits_{\lambda} (x+\lambda y)=(x^2,y^2).\]

The importance of using an infinite field $k$ in the above example cannot be overstated.  For example, if $R=k[\Delta_{1,2}]=k[x,y]/(xy)$ where $k=\mathbb{F}_2$, then the only minimal $*$-reduction of $\ideal{m}$ is $(x+y)$, which means that the intersection of all minimal $*$-reductions of $\ideal{m}$ is $(x+y)$. The upper and lower bound we introduce in this section are given with the intention that $k$ is infinite. For this example, we notice $*\core\ideal{m}$ is generated by a pair of monomials.  In general, we can observe the following:
\begin{observ}
Suppose $I\subseteq k[\Delta]$ is an ideal generated by monomials and that $I=I^{*}$. Then $*\core{I}$ is generated by monomials.
\end{observ}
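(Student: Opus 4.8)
The plan is to exploit the natural torus action on the Stanley--Reisner ring. Write $R=k[\Delta]=k[x_1,\dots,x_n]/I_{\Delta}$ and let $T=(k^{\times})^{n}$ act on $R$ by $t\cdot x_i=t_i x_i$; because $I_{\Delta}$ is a monomial ideal this descends to an action of $T$ on $R$ by $k$-algebra automorphisms, and for $t\in T$ I will write $\phi_t\colon R\to R$ for the corresponding automorphism. Since $I$ is generated by monomials, $\phi_t(I)=I$ for every $t\in T$. (The hypothesis $I=I^{*}$ is part of the setting but, as will be clear, the argument only uses that $I$ is a monomial ideal.)

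First I would check that $T$ permutes the set $\mathcal{S}$ of minimal $*$-reductions of $I$. If $J\in\mathcal{S}$, then $\phi_t(J)\subseteq\phi_t(I)=I$, and applying Corollary \ref{persistcor} to the isomorphism $\phi_t$ gives $I=\phi_t(I)\subseteq\phi_t(J^{*})=\phi_t(J)^{*}$, so $\phi_t(J)$ is a $*$-reduction of $I$. If some $K\subsetneq\phi_t(J)$ were also a $*$-reduction of $I$, then $\phi_t^{-1}(K)\subsetneq J$ would be one as well (again by Corollary \ref{persistcor}), contradicting minimality of $J$; hence $\phi_t(J)\in\mathcal{S}$. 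As $\phi_t$ is bijective with inverse $\phi_{t^{-1}}$, the assignment $J\mapsto\phi_t(J)$ is a bijection of $\mathcal{S}$ onto itself, and therefore
\[
\phi_t\bigl(*\core I\bigr)=\phi_t\Bigl(\bigcap_{J\in\mathcal{S}}J\Bigr)=\bigcap_{J\in\mathcal{S}}\phi_t(J)=\bigcap_{J\in\mathcal{S}}J=*\core I .
\]
Thus $L:=*\core I$ is a $T$-stable ideal of $R$.

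It then remains to prove that a $T$-stable ideal $L$ of $R$ is generated by monomials. I would expand an arbitrary $f\in L$ in the $k$-basis of $R$ consisting of the images of the monomials of $k[x_1,\dots,x_n]$ not lying in $I_{\Delta}$, say $f=\sum_{j=1}^{m}c_j x^{\alpha_j}$ with the $\alpha_j$ distinct and $c_j\in k^{\times}$. For each $t\in T$ we have $\phi_t(f)=\sum_j c_j\, t^{\alpha_j}x^{\alpha_j}\in L$. Since $k$ is infinite, the monomial functions $t\mapsto t^{\alpha_1},\dots,t\mapsto t^{\alpha_m}$ on $T$ are $k$-linearly independent, so their joint values span $k^{m}$ and one may choose $t^{(1)},\dots,t^{(m)}\in T$ for which the matrix $\bigl(t^{(i)\,\alpha_j}\bigr)_{i,j}$ is invertible. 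Solving the resulting linear system expresses each $c_j x^{\alpha_j}$ as a $k$-linear combination of $\phi_{t^{(1)}}(f),\dots,\phi_{t^{(m)}}(f)$, so $c_j x^{\alpha_j}\in L$. Hence $L$ is generated by the monomials it contains.

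Everything is formal once one notes that tight closure commutes with the isomorphisms $\phi_t$ (Corollary \ref{persistcor}); the one point requiring care is the last step, the implication ``$T$-stable $\Rightarrow$ monomial,'' which genuinely uses that $k$ is infinite (as flagged in the remarks preceding the statement) and should be phrased via linear independence of the characters of $T$ rather than by literal averaging, since $k$ may have positive characteristic.
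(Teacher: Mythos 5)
Your proof is correct and follows essentially the same route as the paper's: both use the torus action of $(k^{\times})^{n}$ together with Corollary \ref{persistcor} to show that $*\core I$ is a $T$-stable ideal, hence monomial. You are in fact somewhat more careful than the paper, since you verify that the action preserves \emph{minimality} of $*$-reductions and you actually prove the ``$T$-stable $\Rightarrow$ monomial'' step via linear independence of characters, both of which the paper leaves implicit.
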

\begin{proof}
Let $S=k[x_1,\ldots,x_n]$, a polynomial ring in $n$ variables over an infinite field $k$ and let $I_{\Delta}$ be the defining ideal of a simplicial complex such that $k[\Delta]=S/I_{\Delta}$. There exists a group action of $G=(k^{\times})^n$ on $S$ defined by \[(\lambda_1,\ldots,\lambda_n)\cdot f(x_1,\ldots,x_n):=f(\lambda_1x_1,\ldots,\lambda_nx_n).\]The fixed ideals of $S$ under $G$ are the monomial ideals.  The action of $G$ on $S$ induces an action on $k[\Delta]$. Let $J$ be an ideal of $k[\Delta]$ such that $J^*=I$.  Then for any $g\in G$, $g\cdot J=\{g\cdot a\ |\ a\in J\}$ is an ideal and $(g\cdot J)^*=I$ by Corollary \ref{persistcor}. Thus \[*\core I=\bigcap_{\{J|J^*=I\}}J=\bigcap_{\{J|J^*=I\}}g\cdot J=g\cdot\bigcap_{\{J|J^*=I\}}J=g\cdot *\core I.\]Since the group action fixes $*\core I$, $*\core I$ must be generated by monomials.
\end{proof}




Calculating the $*\core$ of any ideal can be difficult, so we want to have a better idea of where we should look for the monomials that generate the $*\core$. The first place we look is to the test ideal $\tau$ of the ring $R$.  The test ideal $\tau$ is defined the following way:\[\tau:=\bigcap_{I\text{ ideal of }R}(I:I^{*})\] which by Vassilev \cite[Theorem 3.7]{Va-testquot} can also be defined to be the sum of the annihilating ideals of the minimal primes in a Stanley-Reisner ring $k[\Delta]$.  The test ideal is not hard to find in the setting of Stanley-Reisner rings, especially since the minimal primes of a Stanley-Reisner ring $k[\Delta]$ are easy to describe.  In our example of $k[\Delta]=k[x,y]/(xy)$, the annihilator of $(x)$ is $(y)$ and the annihilator of $(y)$ is $(x)$, therefore $\tau=(x,y)$.

The following observation about the test ideal $\tau$ from \cite[Observation 3.1]{FoVaVr-*core} provides a computationally based lower bound for the $*\core$ of an ideal.

\begin{observ}
Let $R$ be a ring of any characteristic with test ideal $\tau$.  Let $I$ be an ideal of $R$.  Then $\tau I\subseteq *\core{I}$.
\end{observ}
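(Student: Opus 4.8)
The plan is to show directly that $\tau I \subseteq J$ for every minimal $*$-reduction $J$ of $I$; since $*\core{I}$ is the intersection of all such $J$, this yields $\tau I \subseteq *\core{I}$. So fix an arbitrary minimal $*$-reduction $J$ of $I$, meaning $J \subseteq I$ and $J^* = I^*$. I want to produce, for each generator $\tau_\ell$ of $\tau$ and each generator $a$ of $I$, a witness that $\tau_\ell a \in J$.

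The key input is the definition of the test ideal as $\tau = \bigcap_K (K : K^*)$, the intersection being over all ideals $K$ of $R$. In particular, applying this with $K = J$, we get $\tau \subseteq (J : J^*)$, i.e. $\tau \cdot J^* \subseteq J$. Now the point of $J$ being a $*$-reduction of $I$ is precisely that $I \subseteq I^* = J^*$. Chaining these two containments gives
\[
\tau I \;\subseteq\; \tau J^* \;\subseteq\; J.
\]
Since $J$ was an arbitrary minimal $*$-reduction of $I$, intersecting over all of them gives $\tau I \subseteq \bigcap_J J = *\core{I}$, which is the claim.

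The only subtlety worth a sentence is the characteristic $0$ case flagged in the statement: the definition of $\tau$ as $\bigcap_K(K:K^*)$ and Vassilev's description of it as a sum of annihilators of minimal primes both need to make sense there, but the excerpt has already invoked \cite[Theorem 3.7]{Va-testquot} for exactly this, and tight closure in characteristic $0$ is set up (via \cite{HH-tcz}) so that the containment-based manipulations above are still valid. So I would simply remark that the argument is characteristic-independent given the cited foundational results, and otherwise the proof is the two-line containment chase above. There is no real obstacle here — the statement is essentially unwinding the definition of $\tau$ together with the definition of a $*$-reduction; the main thing to get right is the direction of the containment $(K:K^*)$ versus $(K^*:K)$ and to remember to take $K = J$ rather than $K = I$.
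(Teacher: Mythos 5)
Your proof is correct. Note that the paper itself gives no proof of this observation at all---it simply cites \cite[Observation 3.1]{FoVaVr-*core}---so there is nothing internal to compare against; your two-step containment chase, $\tau I\subseteq \tau J^{*}\subseteq J$ for each minimal $*$-reduction $J$ (using $\tau\subseteq (J:J^{*})$ from the definition of the test ideal and $I\subseteq J^{*}$ from the definition of a $*$-reduction), is exactly the standard argument one expects the cited source to contain, and it is valid in either characteristic given the setup of \cite{HH-tcz}. One minor remark: your argument never uses minimality of $J$, so it in fact shows $\tau I$ lies in \emph{every} $*$-reduction of $I$, which is slightly stronger than needed.
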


In simple cases, $\tau I$ is exactly the $*\core{I}$.  For example, in $k[x,y]/(xy)$, $\tau \ideal{m}=(x,y)^2=(x^2,y^2)=*\core\ideal{m}$. As the dimension of Stanley-Reisner rings increases, this lower bound does not capture all the information about $*\core{I}$.  The ring $k[x,y,z]/(xy)$ has test ideal $\tau=(x,y)$ and \[\tau\ideal{m}=(x,y)\cdot(x,y,z)=(x^2,xz,y^2,yz),\] but the monomial $z^2$ is also easily computed to be in $*\core{\ideal{m}}$. The difficulty of computation also tends to increase as simplicial complexes get more complicated.  Because of this, we provide a simpler lower bound for $*\core{\ideal{m}}$, namely $\ideal{m}^{d+1}\subseteq *\core{\ideal{m}}$.  Often, this lower bound is exactly $*\core{\ideal{m}}$.  To show this we need the following lemma.

\begin{lemma}
\label{lemma}
Let $\Delta_{d,n}$ be the complete $d-1$ dimensional simplicial complex on $n$ vertices. Let $\alpha_1,\alpha_2,\ldots,\alpha_s$, $s\leq d$, be a partition of the positive integer $d+1$ with \[\alpha_1\geq\alpha_2\geq\cdots\geq\alpha_s\geq1;\alpha_1\geq 2\]Then all monomials of the form $x_{i_1}^{\alpha_{1}}x_{i_2}^{\alpha_{2}}\cdots x_{i_s}^{\alpha_s}$ (with $i_1,\ldots,i_s$ distinct) are in the $*\core{\ideal{m}}$ if all monomials of the form $x_{j_1}^{\alpha_{1}-1}x_{j_2}^{\alpha_{2}}\cdots x_{j_s}^{\alpha_s}x_{j_{s+1}}$ (with $j_1,\ldots,j_{s+1}$ distinct) are in $*\core{\ideal{m}}$.
\end{lemma}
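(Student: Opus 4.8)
The plan is to show that any monomial $m = x_{i_1}^{\alpha_1}\cdots x_{i_s}^{\alpha_s}$ as in the statement lies in every minimal $*$-reduction $I$ of $\ideal m$, given the hypothesis on the "spread-out" monomials $x_{j_1}^{\alpha_1-1}x_{j_2}^{\alpha_2}\cdots x_{j_s}^{\alpha_s}x_{j_{s+1}}$. Fix such an $I$; by Theorem~\ref{spreaddn} and Corollary~\ref{subideal} we may assume $I=(f_1,\dots,f_d)$ with $d=\dim k[\Delta_{d,n}]$, and after diagonalizing with respect to any chosen minimal prime $P=(X\setminus\{x_{i_1},\dots,x_{i_d}\})$ the generators take the form $x_{i_\ell}+(\text{terms in }P)$ (the matrix of linear coefficients on $\{x_{i_1},\dots,x_{i_d}\}$ is nonsingular, as in the proof of Theorem~\ref{ideal}). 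Since only the $s\le d$ variables $x_{i_1},\dots,x_{i_s}$ appear in $m$, I would pick the prime $P$ so that these are among the diagonalized variables; then modulo $I$ each $x_{i_\ell}$ ($1\le\ell\le s$) is congruent to a polynomial $h_\ell$ all of whose terms involve variables \emph{other than} $x_{i_1},\dots,x_{i_s}$ — in fact variables outside the facet, i.e.\ in $P$ is not quite what we want; rather, we can choose the facet/prime so that $h_\ell$ is supported on the remaining $d-s$ diagonalized variables plus possibly others. The point is that substituting $x_{i_\ell}\mapsto h_\ell$ into $m$ rewrites $m$ modulo $I$ as a combination of monomials each of which has a strictly smaller exponent on some $x_{i_\ell}$ and picks up a new distinct variable, i.e.\ monomials of the shape controlled by the hypothesis.

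More precisely, I would argue by replacing one factor at a time. Write $m = x_{i_1}^{\alpha_1}\cdots x_{i_s}^{\alpha_s}$ with $\alpha_1\ge 2$. Modulo $I$ (diagonalized at a suitable $P$) we have $x_{i_1}\equiv h_1$ where $h_1$ is a $k$-linear-plus-higher combination of monomials, each involving a variable distinct from all of $x_{i_1},\dots,x_{i_s}$ — here is where one must be careful to choose the facet $F$ of size $d$ containing $\{v_{i_1},\dots,v_{i_s}\}$ and at least one more vertex, which is possible since $s\le d < n$ so there is room, and since $\Delta_{d,n}$ is the complete complex every such $d$-subset is a facet. Then
\[
m \;\equiv\; h_1\, x_{i_1}^{\alpha_1-1}x_{i_2}^{\alpha_2}\cdots x_{i_s}^{\alpha_s} \pmod I,
\]
and every monomial on the right is (a scalar times) a monomial of the form $x_{j_1}^{\alpha_1-1}x_{j_2}^{\alpha_2}\cdots x_{j_s}^{\alpha_s}x_{j_{s+1}}$ with the $j$'s distinct, possibly multiplied by an extra monomial of positive degree (coming from higher-degree terms of $h_1$). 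By the hypothesis each such base monomial lies in $*\core\ideal m\subseteq I$, hence so does the whole right-hand side, hence $m\in I$. Running this over all minimal $*$-reductions $I$ gives $m\in *\core\ideal m$.

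The main obstacle I anticipate is bookkeeping the "extra distinct variable" cleanly: one must verify that after the substitution $x_{i_1}\mapsto h_1$ the resulting monomials genuinely have the partition shape $(\alpha_1-1,\alpha_2,\dots,\alpha_s,1)$ \emph{with all indices distinct}, rather than accidentally reusing $x_{i_2},\dots,x_{i_s}$ in a way that collapses two exponents (which would change the partition). This is handled by choosing the prime $P$ (equivalently the facet) adapted to $m$ so that the diagonalized replacements $h_\ell$ for $x_{i_1},\dots,x_{i_s}$ only reintroduce the $d-s\ge 0$ "other" facet variables together with variables outside, none of which is among $x_{i_1},\dots,x_{i_s}$; a short argument using that all size-$d$ subsets are facets of $\Delta_{d,n}$ and that $n>d$ guarantees such a facet and such a supported-away rewriting exists. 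A secondary, purely cosmetic point is that higher-degree terms of $h_1$ only make the resulting monomial a multiple of one of the hypothesized generators, so they cause no trouble since $*\core\ideal m$ is an ideal. Once these two points are pinned down the rest is the one-step reduction displayed above, iterated trivially (indeed a single step suffices since we only need to lower $\alpha_1$ by one and add one new variable).
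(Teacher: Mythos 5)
Your proposal is essentially the paper's own proof: diagonalize an arbitrary minimal $*$-reduction at a facet containing $\{v_{i_1},\ldots,v_{i_s}\}$ (the paper does this by invoking the symmetry of $\Delta_{d,n}$ to assume $i_\ell=\ell$ and using the prime $(x_{d+1},\ldots,x_n)$), multiply the generator $x_{i_1}+g_1$ by $x_{i_1}^{\alpha_1-1}x_{i_2}^{\alpha_2}\cdots x_{i_s}^{\alpha_s}$, and absorb every term of $g_1\cdot x_{i_1}^{\alpha_1-1}x_{i_2}^{\alpha_2}\cdots x_{i_s}^{\alpha_s}$ using the hypothesis, since the new variable introduced is distinct from $x_{i_1},\ldots,x_{i_s}$. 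The two points you flag as needing care (distinctness of the extra variable, and the higher-degree terms of $g_1$) are handled, respectively glossed over, at exactly the same level of detail in the paper, so there is nothing to add.
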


\begin{proof}
Since all simplicial complexes of the form $\Delta_{d,n}$ are uniform, without loss of generality, if we can show that a single monomial of the form $x_{i_1}^{\alpha_{1}}x_{i_2}^{\alpha_{2}}\cdots x_{i_s}^{\alpha_s}$ is in $*$-core($\ideal{m}$), then we have shown they all are in $*$-core($\ideal{m}$).  We therefore will let $i_1=1$, $i_2=2$,$\ldots, i_s=s$.  For any minimal $*$-reduction of $I=(f_1,\ldots,f_d)$ of $\ideal{m}$, we know by Theorem \ref{ideal} and Corollary \ref{subideal} that $I=(x_1+g_1,\ldots,x_d+g_d)$ where each $g_t$, $1\leq t\leq d$ is a polynomial with no linear terms in the variables $x_{1},\ldots, x_d$.  Multiply $x_1+g_1$ by $x_{1}^{\alpha_{1}-1}x_{2}^{\alpha_{2}}\cdots x_{s}^{\alpha_s}$.  Then since $d+1> s$, $x_{1}^{\alpha_{1}-1}x_{2}^{\alpha_{2}}\cdots x_{s}^{\alpha_s}\cdot g_1$ is a polynomial in which all the terms are divisible by a monomial of the form $x_{1}^{\alpha_{1}-1}x_{2}^{\alpha_{2}}\cdots x_{s}^{\alpha_s}x_{j_{s+1}}$ for $d+1\leq j_{s+1}\leq n$.  Since $x_1+g_1\in I$ and each term of $x_{1}^{\alpha_{1}-1}x_{2}^{\alpha_{2}}\cdots x_{s}^{\alpha_s}\cdot g_1$ is in $*$-core($\ideal{m}$)$\subseteq I$, $x_{1}^{\alpha_{1}}x_{2}^{\alpha_{2}}\cdots x_{s}^{\alpha_s}$ must also be in $I$.  Since $I$ was arbitrary, $x_{1}^{\alpha_{1}}x_{2}^{\alpha_{2}}\cdots x_{s}^{\alpha_s}$ is in every minimal $*$-reduction of $\ideal{m}$ and is therefore in $*$-core($\ideal{m}$).
\end{proof}

\begin{theorem}
\label{lower}
Let $\Delta_{d,n}$ be the complete $d-1$ dimensional simplicial complex on $n$ vertices, and $\ideal{m}$ the maximal ideal of $k[\Delta_{d,n}]$.  Then \[\ideal{m}^{d+1}\subseteq *\core{\ideal{m}}.\]
\end{theorem}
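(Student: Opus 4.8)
The plan is to obtain Theorem \ref{lower} as a formal consequence of Lemma \ref{lemma} by inducting on the number of parts of a partition of $d+1$, with the ``base case'' being the partition with the most parts. First I would reduce to checking monomial generators: $*\core{\ideal{m}}$ is an intersection of ideals, hence an ideal, and $\ideal{m}^{d+1}$ is generated by the degree-$(d+1)$ monomials in $x_1,\dots,x_n$, so it suffices to show each such monomial lies in $*\core{\ideal{m}}$. The non-faces of $\Delta_{d,n}$ are exactly the subsets of $V$ of size at least $d+1$, so $I_{\Delta_{d,n}}$ contains every squarefree monomial of degree $d+1$; consequently a degree-$(d+1)$ monomial whose support has more than $d$ variables is $0$ in $k[\Delta_{d,n}]$ and lies in $*\core{\ideal{m}}$ trivially. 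This leaves the monomials $x_{i_1}^{\alpha_1}\cdots x_{i_s}^{\alpha_s}$ with distinct $i_j$, each $\alpha_j\ge 1$, $\sum_j\alpha_j=d+1$, and $s\le d$; after relabeling we may take $\alpha_1\ge\cdots\ge\alpha_s$, and since $s\le d<d+1$ the pigeonhole principle forces $\alpha_1\ge 2$, so $(\alpha_1,\dots,\alpha_s)$ is precisely a partition of the sort appearing in Lemma \ref{lemma}.

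Next I would carry out the induction, downward on $s$ from $s=d$ to $s=1$ (equivalently, induction on $d+1-s$). For $s=d$ the only admissible partition is $(2,1,\dots,1)$, and Lemma \ref{lemma} reduces the claim for those monomials to the claim for monomials $x_{j_1}x_{j_2}\cdots x_{j_{d+1}}$ on $d+1$ distinct variables; these are $0$ in $k[\Delta_{d,n}]$ (a $(d+1)$-element vertex set is not a face), hence in $*\core{\ideal{m}}$, so the base case holds. For $s<d$, assume every monomial whose exponent multiset is a partition of $d+1$ into $s+1$ positive parts lies in $*\core{\ideal{m}}$. Given an admissible $(\alpha_1,\dots,\alpha_s)$ with $\alpha_1\ge 2$, Lemma \ref{lemma} reduces the claim for $x_{i_1}^{\alpha_1}\cdots x_{i_s}^{\alpha_s}$ to the claim for all monomials of type $x_{j_1}^{\alpha_1-1}x_{j_2}^{\alpha_2}\cdots x_{j_s}^{\alpha_s}x_{j_{s+1}}$; re-sorting the exponents, this is a partition of $d+1$ into $s+1$ positive parts, and since $s+1\le d$ it is again admissible, so the induction hypothesis applies and $x_{i_1}^{\alpha_1}\cdots x_{i_s}^{\alpha_s}$ is in $*\core{\ideal{m}}$. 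Combining the two cases, every degree-$(d+1)$ monomial lies in $*\core{\ideal{m}}$, whence $\ideal{m}^{d+1}\subseteq*\core{\ideal{m}}$.

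The substantive work is already packaged inside Lemma \ref{lemma} (together with Theorem \ref{ideal} and Corollary \ref{subideal}, which say that minimal $*$-reductions of $\ideal{m}$ are exactly the $d$-generated ones and have the diagonal form used there), so the main thing to get right here is the combinatorial bookkeeping: correctly identifying which monomials of $\ideal{m}^{d+1}$ survive in $k[\Delta_{d,n}]$, verifying that one application of Lemma \ref{lemma} never produces a partition with more than $d$ parts — so that the lemma remains applicable and the induction hypothesis is invoked at the right level — and locating the base case at the most refined partition $(2,1,\dots,1)$, whose reduction lands on squarefree degree-$(d+1)$ monomials that vanish. I would also note at the outset that $*\core{\ideal{m}}$ is an ideal, which is what legitimizes reducing to monomial generators.
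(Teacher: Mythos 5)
Your proposal is correct and follows essentially the same route as the paper: reduce to degree-$(d+1)$ monomial generators, index them by partitions of $d+1$, and induct downward on the number of parts via Lemma \ref{lemma}, anchoring the induction at the squarefree degree-$(d+1)$ monomials, which vanish in $k[\Delta_{d,n}]$. The only difference is cosmetic — the paper places the base case at the length-$(d+1)$ all-ones partition while you place it at $(2,1,\dots,1)$ and fold the vanishing observation into the first application of the lemma.
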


\begin{proof}
The ideal $\ideal{m}^{d+1}$ is generated by all monomials of degree $d+1$ over the variables $x_1,\ldots,x_n$.  If we can show all such monomials of are in $*\core{\ideal{m}}$, then $*\core{\ideal{m}}$ is bounded below by $\ideal{m}^{d+1}$.  The degree distribution on any monomial corresponds to a partition of $d+1$.  The length of a partition, $s$, is $1\leq s\leq d+1$. In $k[\Delta_{d,n}]$, any product of $d+1$ distinct variables is 0.  Therefore, all products of $d+1$ distinct variables are in $*\core{\ideal{m}}$.  These correspond to the only partition of length $d+1$.

Suppose that all monomials corresponding to partitions of $d+1$ of length $s$ are in $*\core{\ideal{m}}$, $s>1$.  Let $x_{i_1}^{\alpha_1}\cdots x_{i_{s-1}}^{\alpha_{s-1}}$ be a monomial corresponding to a partition of length $s-1$ with $\alpha_1\geq\alpha_2\geq\ldots\geq\alpha_{s}\geq 1$.  The exponent $\alpha_1\geq 2$ by the pigeonhole principle. Then by Lemma \ref{lemma}, $x_{i_1}^{\alpha_1}\cdots x_{i_{s-1}}^{\alpha_{s-1}}\in *\core{\ideal{m}}$ because all monomials of the form $x_{j_1}^{\alpha_1-1}\cdots x_{j_{s-1}}^{\alpha_{s-1}}x_{j_s}\in *\core{\ideal{m}}$.  Thus by induction, $\ideal{m}^{d+1}\subseteq *\core{\ideal{m}}$.
\end{proof}

\begin{theorem}
Let $\Delta_{d,n}$ be the complete $d-1$ dimensional simplical complex on $n$ vertices, and $\ideal{m}$ the maximal ideal of $k[\Delta_{d,n}]$ generated by the variables.  Then\[*\core{\ideal{m}}\subseteq\ideal{m}^2.\]
\end{theorem}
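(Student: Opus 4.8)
The plan is to combine two facts already in hand: that $*\core{\ideal{m}}$ is a monomial ideal (the observation proved via the action of the torus $(k^{\times})^{n}$), and that, by Theorems~\ref{ideal} and~\ref{spreaddn} together with Observation~\ref{observ}, $\ideal{m}$ admits minimal $*$-reductions generated by $d=\dim k[\Delta_{d,n}]$ general linear forms. Since every $*$-reduction of $\ideal{m}$ is contained in $\ideal{m}$, the ideal $*\core{\ideal{m}}$ is a monomial ideal contained in $\ideal{m}$, and such an ideal fails to be contained in $\ideal{m}^{2}$ precisely when it contains one of the variables $x_{i}$. So it is enough to exhibit, for each $i$, a minimal $*$-reduction of $\ideal{m}$ that does not contain $x_{i}$.

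Now fix $i$. Following the proof of Theorem~\ref{ideal}, I would take $I=(f_{1},\dots,f_{d})$ with $f_{j}=a_{(j,1)}x_{1}+\cdots+a_{(j,n)}x_{n}$ for a general choice of $a_{(j,l)}\in k$; for general coefficients $I^{*}=\ideal{m}$, and by Theorem~\ref{spreaddn} and Observation~\ref{observ} this is a minimal $*$-reduction. Being generated by linear forms, $I$ is a homogeneous ideal, so $x_{i}\in I$ if and only if $x_{i}$ lies in the degree-one component of $I$, which is exactly the $k$-span of $f_{1},\dots,f_{d}$. Because $\Delta_{d,n}$ is proper we have $d<n$, so a general $d$-dimensional subspace of the $n$-dimensional space of linear forms does not contain the fixed vector $x_{i}$: concretely, the locus of coefficient matrices for which $x_{i}\in\operatorname{span}_{k}\{f_{1},\dots,f_{d}\}$ is cut out by the vanishing of the maximal minors of a $(d+1)\times n$ matrix, hence a proper Zariski-closed subset of $\mathbb{A}^{nd}$, while the locus where $I^{*}=\ideal{m}$ contains the complement of the hypersurface $\gamma\neq 0$ from Theorem~\ref{ideal}; these two dense open subsets of the irreducible space $\mathbb{A}^{nd}$ meet, so a suitable $I$ exists. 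For such $I$ we have $x_{i}\notin I$, and since $*\core{\ideal{m}}\subseteq I$ we conclude $x_{i}\notin *\core{\ideal{m}}$; as $i$ was arbitrary, $*\core{\ideal{m}}\subseteq\ideal{m}^{2}$.

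The point that needs the most care is the claim that the general linear $*$-reduction $I$ above is \emph{minimal} in the lattice sense (no proper subideal of $I$ is again a $*$-reduction of $\ideal{m}$), so that $I$ really occurs in the intersection defining $*\core{\ideal{m}}$. If one reads ``minimal $*$-reduction'' as ``$*$-reduction generated by the minimal possible number $d$ of elements,'' as the informal description of $*\core$ in this section suggests, then there is nothing to check because $I$ has exactly $d$ generators. Otherwise it suffices to know that $I$ contains \emph{some} minimal $*$-reduction of $\ideal{m}$ (then $x_{i}\notin I$ forces $x_{i}$ out of it too): since $I^{*}=\ideal{m}$, a minimal $*$-reduction of the ideal $I$ is automatically a minimal $*$-reduction of $\ideal{m}$, so one only needs existence of minimal $*$-reductions below $I$, which is available from the existence theory underlying the definition of $*\spread$ (cf.\ \cite{nme*spread}), or concretely from the fact that $k[\Delta_{d,n}]$ is Cohen--Macaulay of dimension $d$, making $f_{1},\dots,f_{d}$ a regular sequence and $I$ a parameter ideal, among whose subideals one can control descending chains. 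I expect this existence/minimality bookkeeping to be the only genuinely nontrivial ingredient; the rest is the torus-action observation together with the genericity argument for the span of $d$ linear forms.
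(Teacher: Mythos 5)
Your argument is correct, and it shares the paper's overall skeleton (reduce, via the observation that $*\core{\ideal{m}}$ is a monomial ideal, to showing that no variable $x_i$ lies in $*\core{\ideal{m}}$), but the way you rule out the variables is genuinely different. The paper exploits the symmetry of $\Delta_{d,n}$: since all vertices are interchangeable, $x_i\in *\core{\ideal{m}}$ for one $i$ would force every variable, hence all of $\ideal{m}$, into every minimal $*$-reduction, which is impossible because minimal $*$-reductions have only $d<n$ generators. You instead fix $i$ and exhibit a single linear minimal $*$-reduction avoiding $x_i$, by intersecting the dense open locus $\{\gamma\neq 0\}$ from Theorem~\ref{ideal} with the dense open locus where $x_i$ is not in the span of the $d$ general linear forms (a maximal-minor condition, nonempty since $d<n$). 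Your route is more constructive and does not use the vertex-transitivity of $\Delta_{d,n}$ at all, so it transfers verbatim to an arbitrary proper $\Delta$ (where the paper must instead pass through the surjection argument of Theorem~\ref{bound}); the paper's symmetry argument is shorter but is special to $\Delta_{d,n}$. Your closing discussion of whether a $d$-generated $*$-reduction is minimal in the lattice sense is a real subtlety that the paper silently elides (it consistently identifies ``minimal $*$-reduction'' with ``$d$-generated $*$-reduction''), and your fallback --- that it suffices for $I$ to contain \emph{some} minimal $*$-reduction, since $*\core{\ideal{m}}$ is then contained in $I$ --- is the right way to close that gap.
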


\begin{proof}
Since a simplicial complex of the form $\Delta_{d,n}$ is symmetric in the sense that all vertices are exactly identical except for their name, if one variable $x_i$ is in $*\core{\ideal{m}}$, all variables are in $*\core{\ideal{m}}$, which forces every minimal $*$-reduction of $\ideal{m}$ to be exactly equal to $\ideal{m}$.  Since $d < n$, this is an impossibility. Thus $*\core{\ideal{m}}$ contains no degree one monomials.  But $*\core{\ideal{m}}$ is generated by monomials and the smallest possible remaining monomial members of $*\core{\ideal{m}}$ are the degree two monomials.  Thus $*\core{\ideal{m}}\subseteq\ideal{m}^2.$
\end{proof}

\begin{theorem}
\label{bound}
If $\Delta$ is a simplicial complex of dimension $d-1$ on $n$ vertices, then for $\mathfrak{m}$ of $k[\Delta]$,\[\ideal{m}^{d+1}\subseteq *\core{\ideal{m}}\subseteq\ideal{m}^{2}.\]
\end{theorem}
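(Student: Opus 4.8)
The plan is to deduce both inclusions from what is already known for the complete complex $\Delta_{d,n}$. We assume $d<n$ throughout, as in the results this builds on (when $d=n$ the ring is a polynomial ring, which is outside the intended scope). Write $\ideal{m}$ for the graded maximal ideal of $k[\Delta]$ and $\ideal{n}$ for that of $k[\Delta_{d,n}]$. Since $\ideal{m}$ is prime it is tightly closed, so by the earlier observation that the $*\core$ of a tightly closed monomial ideal is generated by monomials, $*\core{\ideal{m}}$ is a monomial ideal; hence $*\core{\ideal{m}}\subseteq\ideal{m}^2$ is equivalent to the statement that no variable $x_i$ lies in $*\core{\ideal{m}}$, while $\ideal{m}^{d+1}\subseteq *\core{\ideal{m}}$ is equivalent to the statement that every monomial of $k[\Delta]$ of degree $d+1$ lies in every minimal $*$-reduction of $\ideal{m}$.

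For $*\core{\ideal{m}}\subseteq\ideal{m}^2$ I would use the surjection $\varphi\colon k[\Delta_{d,n}]\twoheadrightarrow k[\Delta]$ coming from $\Delta\subseteq\Delta_{d,n}$. Take $I=(\ell_1,\dots,\ell_d)$ a $*$-reduction of $\ideal{n}$ by generic linear forms as in Theorem~\ref{ideal}. By the argument of Theorem~\ref{spreaddn}, $\varphi(I)$ is a $*$-reduction of $\ideal{m}$ with $d=*\spread\ideal{m}$ generators, hence (being generated by $*\spread\ideal{m}$ elements) a minimal $*$-reduction, so that $*\core{\ideal{m}}\subseteq\varphi(I)$. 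Now $x_i\in\varphi(I)$ exactly when $x_i\in(\ell_1,\dots,\ell_d)+I_\Delta$ in $k[x_1,\dots,x_n]$; since $I_\Delta$ is generated in degrees $\ge 2$, the degree-one component of this homogeneous ideal is $\operatorname{span}_k(\ell_1,\dots,\ell_d)$, which for a generic choice of the $\ell_j$ (possible because $d<n$) contains no coordinate. Choosing the $\ell_j$ generic for this condition and for Theorem~\ref{ideal} simultaneously, $\varphi(I)$ contains no variable, and since $*\core{\ideal{m}}\subseteq\varphi(I)$ is a monomial ideal this forces $*\core{\ideal{m}}\subseteq\ideal{m}^2$.

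For $\ideal{m}^{d+1}\subseteq *\core{\ideal{m}}$ I would run the induction of Lemma~\ref{lemma} and Theorem~\ref{lower} directly inside $k[\Delta]$. Fix a minimal $*$-reduction $K$ of $\ideal{m}$. As in Observation~\ref{observ}, $\ideal{m}$ is the only prime over $K$, so $K$ is $\ideal{m}$-primary and $\ideal{m}^N\subseteq K$ for some $N$; it then suffices to show every monomial of $k[\Delta]$ of degree $\ge d+1$ lies in $K$. Monomials whose support is a non-face vanish, so let $m=x_{i_1}^{\alpha_1}\cdots x_{i_s}^{\alpha_s}$ have degree $r\ge d+1$ and support $F=\{v_{i_1},\dots,v_{i_s}\}$ a face; then $s\le d<r$, so some $\alpha_j\ge 2$ and we relabel so that $\alpha_1\ge 2$. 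Extend $F$ to a facet $G$, with minimal prime $\mathfrak B_G$; since $K+\mathfrak B_G=\ideal{m}$ we may diagonalize $K$ with respect to $\mathfrak B_G$, writing $K=(x_{i_1}+h_1,\dots,x_{i_e}+h_e,h_{e+1},\dots,h_d)$ with each $h_j$ free of linear terms in the variables of $G$. Setting $M=x_{i_1}^{\alpha_1-1}x_{i_2}^{\alpha_2}\cdots x_{i_s}^{\alpha_s}$, we have $m+Mh_1=M(x_{i_1}+h_1)\in K$, so the point is that $Mh_1\in K$. Each monomial occurring in $Mh_1$ either involves a variable outside $G$ — in which case its support properly contains $F$, so it is zero, or of degree $>r$, or of degree $r$ with $s+1$ distinct variables — or it involves only variables of $G$, and then (since $h_1$ has no linear $G$-term) it has degree $>r$. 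Hence $Mh_1\in K$ follows by a descending induction on $r$, with base case $r\ge N$ supplied by $\ideal{m}^N\subseteq K$, and, within each $r$, a descending induction on the number of distinct variables, which is vacuous once that number exceeds $d$ (such monomials vanish). Thus $m\in K$, and since $K$ was an arbitrary minimal $*$-reduction, $\ideal{m}^{d+1}\subseteq *\core{\ideal{m}}$; combined with the previous paragraph this is the asserted $\ideal{m}^{d+1}\subseteq *\core{\ideal{m}}\subseteq\ideal{m}^2$.

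The upper bound is routine; the work is in the lower bound. A purely formal transfer along $\varphi$ is not available there: a minimal $*$-reduction of $\ideal{m}$ in $k[\Delta]$ need not be the image of any $*$-reduction of $\ideal{n}$ in $k[\Delta_{d,n}]$ — for instance $(x+y,z)$ in $k[x,y,z]/(xy)$ — so the induction must genuinely be carried out inside $k[\Delta]$. The one genuinely new feature there, compared with the uniform complex $\Delta_{d,n}$, is that $Mh_1$ may contain monomials supported entirely on the facet $G$: these are killed by no monomial relation and raise only the degree, not the variable count, which is exactly why the induction has to be organized as a descending induction on degree anchored by the $\ideal{m}$-primariness of $K$, rather than the single induction that sufficed for $\Delta_{d,n}$.
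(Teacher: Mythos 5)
Your proof is correct, and its two halves relate to the paper's differently. For the upper bound you take essentially the paper's route: push a generic $d$-generated linear $*$-reduction of $\ideal{n}$ forward along $k[\Delta_{d,n}]\twoheadrightarrow k[\Delta]$. You execute it a bit more cleanly, since exhibiting one minimal $*$-reduction whose degree-one part is a generic $d$-plane (hence contains no coordinate, as $d<n$) and then invoking the monomial-generation observation avoids the paper's chain $\bigcap J\subseteq\bigcap f(I)\subseteq f(\ideal{m}^2)$, whose middle inclusion is the least transparent step of the published proof. The genuine divergence is in the lower bound. The paper stays at degree exactly $d+1$ and inducts downward on partition length, as in Lemma~\ref{lemma} and Theorem~\ref{lower}; to make the key multiplication step close, it asserts that the generators of a minimal $*$-reduction can be manipulated so that \emph{all} non-linear terms are divisible by variables of the chosen minimal prime, so that every term of $Mg$ is divisible by a degree-$(d+1)$ monomial with strictly longer support. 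You drop that manipulation claim entirely --- diagonalization only controls linear parts, so $x_{i_1}+h_1$ may carry non-linear terms supported inside the facet $G$ --- and instead absorb those terms with an outer descending induction on degree, anchored at $\ideal{m}^N\subseteq K$ via the $\ideal{m}$-primariness of $K$ (which follows from the argument of Observation~\ref{observ}), with the paper's partition-length induction surviving as the inner loop at each fixed degree. This costs a second layer of induction but buys an argument that asks nothing of the non-linear parts beyond what row reduction of the linear parts actually provides; it is the more robust treatment of that point. Your closing remark that minimal $*$-reductions of $\ideal{m}$ need not lift along $\varphi$ correctly explains why the lower bound cannot be transferred formally from $\Delta_{d,n}$ and must be argued inside $k[\Delta]$, which is also what the paper does.
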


\begin{proof}
Let $I$ represent an arbitrary minimal $*$-reduction of $\ideal{m}$ in $k[\Delta_{d,n}]$.  Then $*\core\ideal{m}=\cap I$.  If $f:k[\Delta_{d,n}]\twoheadrightarrow k[\Delta]$ is the natural surjection between the Stanley-Reisner ring associated to $\Delta_{d,n}$ and the Stanley-Reisner ring associated to $\Delta$, a proper sub simplicial complex of the same dimension as $\Delta_{d,n}$, then $f(I)$ is a $*$-reduction of $f(\ideal{m})=\ideal{n}$, the maximal ideal generated by the variables of $k[\Delta]$.  If $J$ is an arbitrary minimal $*$-reduction of $\ideal{n}$, \[*\core\ideal{n}=\cap J\subseteq\cap f(I)\subseteq f(\ideal{m}^2)=\ideal{n}^2\]

Preserving the lower bound only requires a relaxing of conditions in Lemma \ref{lemma}, indeed the lemma is true for all $\Delta$. All monomials corresponding to the length $d+1$ partition of $d+1$ are zero in $k[\Delta]$, so they are, by default, in every ideal of $k[\Delta]$, including $*\core\ideal{n}$. If we want to show the monomial $x_1^{\alpha_1}\cdots x_s^{\alpha_s}$ such that $\alpha_1+\cdots+\alpha_t=d+1$ and $\alpha_i\geq\alpha_{i+1}$ is in $*\core{\ideal{n}}$, choose a minimal prime $P=(y_1,\ldots,y_t)$ of $k[\Delta]$ such that $\{x_1,\ldots,x_s\}\cap\{y_1,\ldots,y_t\}=\varnothing$. If no such ideal exists, $x_1^{\alpha_1}\cdots x_s^{\alpha_s}=0$. Diagonalize and reduce the linear variables of the generators of an arbitrary minimal $*$-reduction $J$ and then manipulate the generators so that the nonlinear terms are all divisible by a variable in $P$. One of the generators of $J$ will be of the form $x_1+g$ where $g$ is the linear terms in the prime $P$ plus non linear terms divisible by variables in this prime. Then if all the terms of $x_1^{\alpha_1-1}\cdots x_s^{\alpha_s}\cdot g$ are in $*\core{\ideal{n}}$, then so is $x_1^{\alpha_1}\cdots x_s^{\alpha_s}$. All these terms are in $*\core{\ideal{n}}$ by the same induction as in Theorem \ref{lower}.
\end{proof}

\section{Special Cases}
\label{cases}

Using the techniques and machinery built in previous sections, we can calculate $*\core{\ideal{m}}$ for many classes of Stanley-Reisner rings without intersecting every minimal $*$-reduction of $\ideal{m}$.  What these calculations suggest is that the structure of the simplicial complex plays a significant role in determining the $*\core$ of $\ideal{m}$.  We will first look at what happens when the simplicial complex consists of disjoint components. In the simplest form the disjoint components are all independently simplices, such as the example $k[x,y]/(xy)$.

\begin{prop} 
\label{p1}
If $\Delta$ is a disjoint union of two or more simplices, then $*\core{\ideal{m}}=\ideal{m}^{2}$.
\end{prop}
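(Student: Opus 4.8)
The plan is to establish the two inclusions separately, the containment $*\core{\ideal{m}}\subseteq\ideal{m}^{2}$ being essentially free. Since a disjoint union of at least two simplices is a proper simplicial complex, Theorem \ref{bound} applies and gives not only $*\core{\ideal{m}}\subseteq\ideal{m}^{2}$ but also $\ideal{m}^{d+1}\subseteq *\core{\ideal{m}}$, where $d=\dim k[\Delta]$; the latter will serve as the base case of an induction below. So all the work is in the reverse inclusion $\ideal{m}^{2}\subseteq *\core{\ideal{m}}$.

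Write $\Delta=\Delta^{(1)}\sqcup\cdots\sqcup\Delta^{(r)}$ with $r\geq 2$ and $\Delta^{(a)}$ the simplex on a vertex set $V_a$. Then the facets of $\Delta$ are exactly $V_1,\ldots,V_r$, the minimal primes of $k[\Delta]$ are the ideals $P_a=(x_k : v_k\notin V_a)$, and $k[\Delta]/P_a$ is the polynomial ring on the variables indexed by $V_a$. Call a monomial \emph{pure} if its support lies inside a single $V_a$ and \emph{mixed} otherwise; the mixed monomials are precisely the monomials vanishing in $k[\Delta]$ (since $\{v_i,v_j\}$ is a non-face whenever $v_i,v_j$ lie in different components), and the pure monomials form a $k$-basis of $k[\Delta]$. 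Since $*\core{\ideal{m}}$ is a monomial ideal (being the $*\core$ of the monomial ideal $\ideal{m}=\ideal{m}^{*}$) and is squeezed between $\ideal{m}^{d+1}$ and $\ideal{m}^{2}$, it suffices to show that every degree-two monomial lies in $*\core{\ideal{m}}$; the mixed ones are $0$, so everything reduces to showing that each pure degree-two monomial $x_ix_j$ (with $v_i,v_j\in V_a$, allowing $i=j$) lies in every minimal $*$-reduction $J$ of $\ideal{m}$.

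I would prove the apparently stronger assertion that for every minimal $*$-reduction $J$, every $a$, and every $e\geq 2$, each pure $V_a$-monomial of degree $e$ lies in $J$, by downward induction on $e$. For $e\geq d+1$ such a monomial lies in $\ideal{m}^{d+1}\subseteq *\core{\ideal{m}}\subseteq J$. For $2\leq e\leq d$, write a given pure $V_a$-monomial $M$ of degree $e$ as $M=x_iM'$ with $v_i\in V_a$ and $M'$ pure of degree $e-1$. Since $J$ is a $*$-reduction of $\ideal{m}$ we have $J+P_a/P_a=\ideal{m}/P_a$, i.e.\ the generators of $J$ map onto the maximal ideal of the polynomial ring $k[\Delta]/P_a$, so by Nakayama their linear parts, restricted to the $V_a$-variables, span every linear form in those variables; hence $J$ contains an element of the form $x_i+g$ with $g$ having no linear term in the $V_a$-variables. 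Now $M+M'g=M'(x_i+g)\in J$, and I finish by checking that every monomial of $M'g$, put in normal form in $k[\Delta]$, already lies in $J$: a linear term of $g$ uses a variable outside $V_a$, so $M'$ times it is a mixed monomial, hence $0$; a normal-form nonlinear term of $g$ is a pure $V_c$-monomial of degree $\geq 2$, and $M'$ times it is either mixed and $0$ (if $c\neq a$) or a pure $V_a$-monomial of degree $\geq e+1$ (if $c=a$), which lies in $J$ by the induction hypothesis. Thus $M'g\in J$ and $M=M'(x_i+g)-M'g\in J$. Taking $e=2$ and intersecting over all minimal $*$-reductions yields $\ideal{m}^{2}\subseteq *\core{\ideal{m}}$.

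I expect the main obstacle to be the bookkeeping around the diagonalized generator $x_i+g$: one must simultaneously know that the $V_a$-linear parts of the generators of $J$ already span (so the passage to $x_i+g$ is a genuine linear elimination, not one that introduces new linear terms in $V_a$-variables) and that the nonlinear tail $g$, once reduced modulo $I_\Delta$, consists only of pure monomials, the $V_a$-pure ones feeding the induction and the rest being annihilated by $M'$. The induction is well-founded exactly because Theorem \ref{bound} pins down the base case at degree $d+1$.
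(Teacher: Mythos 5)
Your proof is correct, but it takes a genuinely different route from the paper's. The paper disposes of the lower bound in two lines via the test ideal: for a disjoint union of simplices the annihilator of the minimal prime $P_i$ is the ideal generated by the variables of the $i$-th component, so $\tau=\sum_i\ann(P_i)=\ideal{m}$, and the cited observation $\tau\,\ideal{m}\subseteq *\core{\ideal{m}}$ immediately gives $\ideal{m}^2\subseteq *\core{\ideal{m}}$; combining with the upper bound of Theorem \ref{bound} finishes the proof. You instead reprove the lower bound by hand: a downward induction on monomial degree, anchored at $\ideal{m}^{d+1}\subseteq *\core{\ideal{m}}$, using a diagonalized generator $x_i+g$ and the fact that mixed monomials vanish so that $M'g$ decomposes into zero terms and pure terms of strictly higher degree already known to lie in $J$. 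This is essentially the technique of Lemma \ref{lemma} and Theorems \ref{lower} and \ref{bound} specialized to this complex, and all the steps check out (the passage from $J+P_a/P_a=\ideal{m}/P_a$ to the existence of $x_i+g$ with no $V_a$-linear terms in $g$ is exactly the paper's diagonalization). What your approach buys is self-containedness: it avoids Vassilev's description of the test ideal and the external observation $\tau I\subseteq *\core{I}$. What it costs is length, and it obscures the structural point the paper is making in this section, namely that $\tau\,\ideal{m}$ sometimes equals $*\core{\ideal{m}}$ on the nose and this is the cleanest such instance. One cosmetic remark: your appeal to $*\core{\ideal{m}}$ being a monomial ideal is not actually needed, since $\ideal{m}^2$ is generated by degree-two monomials and you show each of these lies in every minimal $*$-reduction directly.
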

\begin{proof}
For simplices $\Delta_{i}$ with $1\leq i\leq r$, let $$\Delta=\bigcup_{i=1}^{r}\Delta_i$$ such that $\Delta_i\cap\Delta_{j}=\emptyset$ for any choice of $i$ and $j$, $i\neq j$.  Let the largest element of $\Delta_{i}$ be the set $\{v_{i,1},\ldots,v_{i,n_{i}}\}$.  Then $$k[\Delta]=k[x_{1,1},\ldots,x_{1,n_1},x_{2,1},\ldots,x_{2,n_{2}},\ldots,x_{r,1},\ldots,x_{r,n_r}]/I_{\Delta}$$ where $I_{\Delta}$ is generated by all degree two monomials $x_{i,s}x_{j,t}$, with $i\neq j$, $1\leq s\leq n_i$, and $1\leq t \leq n_{j}$.

Let $X$ be the set of all variables.  The ring $k[\Delta]$ has $r$ minimal primes $P_{1},\ldots,P_{r}$ such that $P_{i}=(X-\{x_{i,1_{i}},\ldots,x_{i,n_i}\})$.  The annihilator of $P_{i}$ is the ideal $(x_{i,1_i},\ldots, x_{i,n_i})$.  Therefore the test ideal $\tau=\ideal{m}$ and $$\tau\cdot \ideal{m}=\ideal{m}^{2}\subseteq *\core{\ideal{m}}.$$ Since we have shown in Theorem \ref{bound} that an upper bound for $*\core{\ideal{m}}$ is always $\ideal{m}^{2}$, $*\core{\ideal{m}}=\ideal{m}^2$.
\end{proof}

As a corollary to this theorem, the $*\core{\ideal{m}}$ in $k[\Delta]$ when $\dim\Delta=0$ is $\ideal{m}^2$.  This fact can also be inferred from the previously established bounds for the $*\core{\ideal{m}}$ because in any one dimensional ring, $\ideal{m}^{d+1}=\ideal{m}^2$.

We will now relax the condition that that the disjointed pieces of the simplicial complex are simplices. We will first look at rings where the complex is a disjoint union between a proper simplicial complex and a simplex such as the ring $k[w,x,y,z]/(xy,wz,xz,yz)$.  We will immediately follow with what happens to the $*\core{\ideal{m}}$ when the two disjoint pieces are both proper.

\begin{prop}
\label{p2}
Let $\Delta=\union$ where $\Delta_1$ is a proper simplicial complex on the variable set $\{x_1,\ldots,x_n\}$ and $\Delta_2$ is a simplex on the variable set $\{y_1,\ldots,y_m\}$ that is disjoint from $\Delta_1$.  Let $\ideal{m}$ be an ideal of $k[\Delta]$.  Then
\[*\core{\ideal{m}}=\varphi^{-1}(*\core{\ideal{m}_1})+(y_{1},\ldots,y_m)^{2},\]
where $\varphi$ is the natural surjection from $k[\Delta]$ to $k[\Delta_{1}]$ and $\ideal{m}_1$ is the maximal ideal generated by the images of the variables in $k[\Delta_1]$.
\end{prop}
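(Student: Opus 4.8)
The plan is to prove the two containments separately after recording the ring-theoretic scaffolding. Write $k[\Delta]=k[x_1,\dots,x_n,y_1,\dots,y_m]/I_\Delta$ with $I_\Delta=I_{\Delta_1}+(x_iy_j)$, so that $x_iy_j=0$ in $k[\Delta]$, $\ker\varphi=(y_1,\dots,y_m)k[\Delta]$, the pure-$x$ monomials span a subring on which $\varphi$ restricts to an isomorphism onto $k[\Delta_1]$, and the minimal primes of $k[\Delta]$ are the primes $\mathfrak{B}_F$ attached to the facets $F$ of $\Delta_1$ (each containing every $y_j$) together with $Q=(x_1,\dots,x_n)$, with $k[\Delta]/Q\cong k[y_1,\dots,y_m]$. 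Put $d=\dim k[\Delta]=\max(\dim k[\Delta_1],m)$ and $d_1=\dim k[\Delta_1]$, and read $\varphi^{-1}(*\core\ideal{m}_1)$ as the ideal of $k[\Delta]$ generated by the monomial generators of $*\core\ideal{m}_1$. Since $*\core\ideal{m}$ is a monomial ideal (by the observation that the $*\core$ of a monomial ideal is monomial) and $k[\Delta]$ has no nonzero monomial divisible by both an $x_i$ and a $y_j$, it suffices in each direction to check the monomials of $k[\Delta]$, which are either pure in the $x$'s or pure in the $y$'s.

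For the containment $\supseteq$ I would show that an arbitrary minimal $*$-reduction $I$ of $\ideal{m}$ contains both $(y_1,\dots,y_m)^2$ and every monomial generator of $*\core\ideal{m}_1$. The first is easy: diagonalizing $I$ with respect to $Q$ (the variables outside $Q$ being exactly the $y_j$) presents $I=(y_1+h_1,\dots,y_m+h_m,h_{m+1},\dots,h_d)$ with each $h_i\in Q$, hence each $h_i$ a polynomial in the $x$'s all of whose terms are divisible by some $x_j$; if $\eta=y_i\eta'$ is a monomial of degree $\ge2$ in the $y$'s, then $\eta'(y_i+h_i)=\eta+\eta'h_i=\eta$ in $k[\Delta]$ because $\eta'h_i$ is a sum of monomials each containing both an $x$ and a $y$. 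For the monomial generators $\mu$ of $*\core\ideal{m}_1$: by Theorem \ref{persist}, $\varphi(I)$ is a $*$-reduction of $\ideal{m}_1$, so by Corollary \ref{subideal} it contains a minimal one and hence $*\core\ideal{m}_1\subseteq\varphi(I)=(h_1,\dots,h_d)k[\Delta_1]$. Writing $\mu=\sum_i\bar a_i h_i$ in $k[\Delta_1]$ and lifting the $\bar a_i$ to polynomials in the $x$'s, the element $g=\sum_{i\le m}a_i(y_i+h_i)+\sum_{i>m}a_ih_i\in I$ satisfies $g=\mu+\ell$ exactly, where $\ell=\sum_{i\le m}a_i^{(0)}y_i$ is a $k$-linear form in the $y$'s (all other corrections vanish because $x_iy_j=0$). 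Thus $\mu\in I$ provided $\mu$ can be written with $\bar a_1,\dots,\bar a_m\in\ideal{m}_1$, i.e. provided $\mu$ lies in $\ideal{m}_1(h_1,\dots,h_m)+(h_{m+1},\dots,h_d)$, the pure-$x$ part of $I$.

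The hard part will be exactly this last point: one must show that $*\core\ideal{m}_1\subseteq\ideal{m}_1(h_1,\dots,h_m)+(h_{m+1},\dots,h_d)$ whenever $(h_1,\dots,h_d)$ is a $*$-reduction of $\ideal{m}_1$ in $k[\Delta_1]$ with $d$ generators — note this is now purely a statement inside $k[\Delta_1]$, and it is subtler than it looks because a $*$-reduction need not have its linear parts spanning $\ideal{m}_1/\ideal{m}_1^2$, so $\ideal{m}_1^2$ itself need not be contained in $\ideal{m}_1(h_1,\dots,h_d)$. I would attack it by induction on $\deg\mu$, mirroring Lemma \ref{lemma}: use that $\mu\in*\core\ideal{m}_1$ forces $\mu$ into every minimal $*$-reduction of $\ideal{m}_1$, peel off one variable $\mu=x_a\mu'$, and produce the needed expression by diagonalizing $(h_1,\dots,h_d)$ at the primes $\mathfrak{B}_F$ and tracking the constant terms of the coefficients through the row operations, as in the coefficient bookkeeping in the proofs of Theorem \ref{bound} and of the $c\ge d+1$ reduction theorem; the squarefree relations $I_{\Delta_1}$ are what let the degree-$1$ discrepancy be absorbed (just as $x^2=x(x+y)$ in $k[x,y]/(xy)$).

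For the reverse containment $\subseteq$ I would argue that every monomial $\nu\notin\varphi^{-1}(*\core\ideal{m}_1)+(y_1,\dots,y_m)^2$ is missed by some minimal $*$-reduction of $\ideal{m}$. Monomials of degree $\ge2$ in the $y$'s lie in the right-hand side, so the cases are $\nu=y_i$, $\nu=x_i$, and a pure-$x$ monomial $\mu$ of degree $\ge2$ with $\mu\notin*\core\ideal{m}_1$ (using $*\core\ideal{m}_1\subseteq\ideal{m}_1^2$ from Theorem \ref{bound}). In each case choose a minimal $*$-reduction $I_1=(g_1,\dots,g_{d_1})$ of $\ideal{m}_1$ in $k[\Delta_1]$ avoiding the relevant $x$-monomial when there is one, and assemble $I=(g_1+y_1,\dots,g_{\min(d_1,m)}+y_{\min(d_1,m)},\dots)$, padding to $d=\max(d_1,m)$ generators either with the remaining $g_i$ or with generators whose linear parts are $k$-combinations of the $y_j$, chosen so that $\varphi(I)=I_1$, so that the $y$-coefficient matrix has rank $m$, and — in the case $\nu=y_i$ — so that $y_i$ is not in the $k$-span of the linear parts of the generators, which is possible because $d<n+m$ when $\Delta_1$ is proper. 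Theorem \ref{inclusion} shows $I^*=\ideal{m}$ (the generators reduce to those of $I_1$ modulo each $\mathfrak{B}_F$ and to $y_1,\dots,y_m$ modulo $Q$), so $I$ is a minimal $*$-reduction, and $\nu\in I$ would give $\nu=\varphi(\nu)\in\varphi(I)=I_1$ for pure-$x$ $\nu$, or would violate the genericity for $\nu=y_i$ — a contradiction in either case. Combining the two containments gives the asserted formula.
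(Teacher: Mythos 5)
Your overall architecture matches the paper's: the same description of the minimal primes, the same reduction to monomials (pure-$x$ versus pure-$y$), the same extension construction $g_i\mapsto g_i+y_i$ for one containment, and the same projection $\varphi(I)$ for the other. (Your direct computation putting $(y_1,\ldots,y_m)^2$ into every diagonalized reduction replaces the paper's appeal to the test-ideal bound $\tau\ideal{m}\subseteq *\core{\ideal{m}}$; both work.) The problem is that the step you yourself flag as ``the hard part'' --- that every monomial $\mu\in *\core{\ideal{m}_1}$ admits a representation $\mu=\sum\bar a_ih_i$ with the coefficients attached to the $y$-carrying generators lying in $\ideal{m}_1$, so that the stray linear form $\ell=\sum a_i^{(0)}y_i$ vanishes --- is exactly the crux of the containment $\varphi^{-1}(*\core{\ideal{m}_1})\subseteq *\core{\ideal{m}}$, and you do not prove it. The paragraph you offer in its place (induction on $\deg\mu$, peel off a variable, track constant terms through row operations) is a plan, not an argument: it never states the inductive hypothesis, why diagonalizing at the primes $\mathfrak{B}_F$ yields coefficients without constant term, or how the squarefree relations absorb the degree-one discrepancy. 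For what it is worth, the paper's own proof silently asserts $\sum\overline{\beta}_if_i=\sum\overline{\beta}_ig_i$ at precisely this spot, i.e.\ it assumes the constant terms cause no trouble; you have correctly located a genuine subtlety, but locating it is not the same as resolving it.

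The gap is closable, and more cheaply than your sketch suggests. If $I$ is linearly generated, then after diagonalizing with respect to $Q$ the elements $h_1,\ldots,h_d$ are linear forms in the $x$'s, so $\varphi(I)$ is a homogeneous $*$-reduction of $\ideal{m}_1$; since $\mu$ is homogeneous of degree $D\geq 2$ (because $*\core{\ideal{m}_1}\subseteq\ideal{m}_1^2$), one may replace each $\bar a_i$ in $\mu=\sum\bar a_ih_i$ by its homogeneous component of degree $D-1\geq 1$, which automatically lies in $\ideal{m}_1$, and the linear form $\ell$ never appears. What remains is to pass from linearly generated minimal $*$-reductions to arbitrary ones, for which you need the theorem that the linear parts of the generators of a $*$-reduction again generate a $*$-reduction, together with a descending induction on degree anchored at $\ideal{m}^{d+1}\subseteq I$ (in the spirit of Corollary \ref{cor}) to control both the higher-degree corrections to the $h_i$ and the possible pure-$y$ terms of degree $\geq 2$ hiding in them --- your claim that each $h_i$ is ``a polynomial in the $x$'s'' is only justified for the linear parts. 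Your reverse containment is essentially sound, but the case $\nu=y_i$ needs one more sentence: when $m>d_1$ the padding generators are pure $y$-linear forms, and you must choose them so that their span avoids $y_i$ while the full $y$-coefficient matrix still has rank $m$, which a generic choice accomplishes.
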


\begin{proof}
Let $P_{1},\ldots,P_{s}$ be the minimal primes of the ring $k[\Delta_1]$.  Then the ideals $Q_{i}=P_{i}k[\Delta]+(y_{1},\ldots,y_{m})$ are minimal primes of $k[\Delta]$ for $1\leq i\leq s$.  In addition to the ideals $Q_{i}$, the ideal $(x_{1},\ldots,x_{n})$ is the only other minimal prime of $k[\Delta]$.  If we compute the test ideal $\tau$, we see \[\ann{(Q_{i})}=\ann{(P_{i})}\cdot k[\Delta]\cap (x_1,\ldots,x_{n})=\ann{(P_{i})}\cdot k[\Delta]\] and \[\ann{(x_{1},\ldots,x_{n})}=(y_{1},\ldots,y_{m})\] so \[\tau=\sum\limits_{i=1}^s\ann{(P_i)}\cdot k[\Delta]+(y_1,\ldots,y_{m}).\]The lower bound we get for the $*\core{\ideal{m}}$ by computing $\tau\cdot\ideal{m}$ is \[\tau\cdot\ideal{m}=\sum\limits_{i=1}^s\ann{(P_i)}\cdot k[\Delta]\cdot(x_{1},\ldots,x_{n})+(y_1,\ldots,y_{m})^2\] which shows that inclusion of $\Delta_2$ in the simplicial complex $\Delta$ results in the inclusion of the degree two monomials in the variables $y_1,\ldots,y_m$ as generators of $*\core{\ideal{m}}$.  Since any product $x_iy_j$ is in $I_{\Delta}$, we need only determine which monomials in the variables $x_{1},\ldots,x_{n}$ are generators of the $*\core{\ideal{m}}$.

For $d=\dim{k[\Delta]}$, let $J=(f_1,\ldots, f_d)$ be a linearly generated minimal $*$-reduction of $\ideal{m}$ in $k[\Delta]$.  Each generator $f_i$ of $J$ can be written $f_i=g_i+h_i$ where $g_i$ is a linear polynomial in the variables $x_{1},\ldots,x_{n}$ and $h_i$ is a linear polynomial in the variables $y_{1},\ldots,y_m$.  Let $\varphi$ be the natural surjection from $k[\Delta]$ to $k[\Delta_1]$ and let $\ideal{m}_1$ be the maximal ideal of $k[\Delta_1]$ generated by the images of the variables.  As we have seen previously, 
\[\ideal{m}_1=\varphi(\ideal{m})=\varphi(J^*)\subseteq \varphi(J)^*\subseteq\ideal{m}_1,\] 
so the ideal $\varphi(J)=(g_1,\ldots,g_d)$ is a $*$-reduction of $\ideal{m_1}$.  Let $\alpha$ be a monomial in $*\core{\ideal{m}_1}$, then for some $\beta_1,\ldots,\beta_d$ in $k[\Delta_1]$, $\alpha=\beta_{1}g_1+\cdots +\beta_dg_d$.  If $\overline{\beta}_i$ represents the element in the inverse image of $\beta_i$ in only the variables $x_1,\ldots,x_n$, then in the ideal $J$, 
\[\overline{\beta}_1f_1+\cdots +\overline{\beta}_df_d=\overline{\beta}_1g_1+\cdots+\overline{\beta}_dg_d\]
is equal to the element of the preimage of $\alpha$ that has no terms in the variables $y_1,\ldots,y_m$.  The only such element that exists is the monomial $\alpha$ itself in $k[\Delta]$.  Thus every monomial in $*\core{\ideal{m}_1}$ is in $*\core{\ideal{m}}$.

The last thing we must show is that if $\alpha$ is a monomial in the variables $x_1,\ldots,x_n$ and $\alpha\in *\core{\ideal{m}}$, then $\alpha$ is also in $*\core{\ideal{m}_1}$. Let $d_1=\dim k[\Delta_1]$ and let $J=(g_1,\ldots,g_{d_1})$ be a minimal $*$-reduction of $\ideal{m}_1$.  One of the following two things is true about $d_1$:  $d_1< d=m$ or $d_1=d$.  To extend $J$ to a minimal $*$-reduction of $\ideal{m}$, it is important to note that $J\cdot k[\Delta]$ is such that $J\cdot k[\Delta]/Q_j=\ideal{m}/Q_j$ for all $j$. So $J\cdot k[\Delta]$ meets all the criteria to be a $*$-reduction of $\ideal{m}$ except for the requirement that $J\cdot k[\Delta]/(x_1,\ldots,x_n)=(y_1,\ldots,y_m)$.  In both cases, we can extend $J$ to a minimal $*$-reduction $J'=(f_1,\ldots,f_d)$ of $\ideal{m}$ by adding $y_i$, for $1\leq i\leq m$, to the $i$th generator of $J$, and using 0 as for all possible generators $g_{d_1+1},\ldots, g_{d}=g_{m}$ if $d_1 < d$.  Let $\alpha$ be a monomial in $*\core{\ideal{m}}$ consisting of only variables $x_1,\ldots,x_n$, then there exist $\beta_{1},\ldots,\beta_{d}$ in $k[\Delta]$ such that $\alpha=\beta_1f_1\cdots+\beta_df_d$.  Each $\beta_i$ can be written $\beta_i=b_i+b_i'$ such that the $b_i$ are the terms of $\beta_i$ in the variables $x_1,\ldots,x_n$ and the $b_i'$ are the terms in variables $y_1,\ldots,y_m$.  Then \[\alpha=\beta_1f_1+\cdots+\beta_df_d=b_1g_1+\cdots+b_1g_{d_1}\in J.\] Thus $\alpha$ is in all minimal $*$-reductions of $\ideal{m}_1$ and therefore, is in $*\core{\ideal{m}_1}$.
\end{proof}

\begin{prop}
\label{p3}
Let $\Delta=\union$ be the disjoint union of two proper simplicial complexes $\Delta_1$ and $\Delta_2$.  Let $\ideal{m}_1$ be the maximal ideal generated by the images of the variables in $k[\Delta_1]$ and let $\ideal{m}_2$ be defined analogously for $k[\Delta_2]$.  Then \[*\core{\ideal{m}}=(*\core{\ideal{m}_1})\cdot k[\Delta]+(*\core{\ideal{m}_2})\cdot k[\Delta].\]
\end{prop}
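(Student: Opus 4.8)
The plan is to pin down exactly which monomials of $k[\Delta]$ lie in $*\core{\ideal{m}}$, exploiting that every product $x_iy_j$ vanishes in $k[\Delta]$. Write $k[\Delta]=k[x_1,\dots,x_n,y_1,\dots,y_m]/I_\Delta$; since $\Delta=\union$ is a disjoint union, its facets are those of $\Delta_1$ together with those of $\Delta_2$, so $I_\Delta$ is generated by $I_{\Delta_1}$, $I_{\Delta_2}$, and all the $x_iy_j$, and in particular $I_\Delta\cap k[x_1,\dots,x_n]=I_{\Delta_1}$. Thus the subalgebra of $k[\Delta]$ generated by the $x_i$ is isomorphic, via the natural surjection $\varphi_1:k[\Delta]\twoheadrightarrow k[\Delta_1]$ sending every $y_j$ to $0$, to $k[\Delta_1]$, and symmetrically for $\varphi_2:k[\Delta]\twoheadrightarrow k[\Delta_2]$. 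By \cite[Theorem~5.1.4]{BH} the minimal primes of $k[\Delta]$ are the ideals $P_ik[\Delta]+(y_1,\dots,y_m)$, $P_i$ a minimal prime of $k[\Delta_1]$, together with the ideals $(x_1,\dots,x_n)+R_jk[\Delta]$, $R_j$ a minimal prime of $k[\Delta_2]$. Since $\ideal{m}$ is maximal, hence prime, $\ideal{m}^*=\ideal{m}$, and $\ideal{m}$ is generated by monomials, so by the observation that the $*$-core of a $*$-closed monomial ideal is a monomial ideal, $*\core{\ideal{m}}$ is generated by monomials; because $x_iy_j=0$, every nonzero monomial of $k[\Delta]$ is a power product of the $x_i$ alone or of the $y_j$ alone. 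Hence $*\core{\ideal{m}}$ is generated by $A\cup B$, where $A$ is the set of monomials of $*\core{\ideal{m}}$ in the $x_i$ and $B$ the set of those in the $y_j$, and it suffices to prove that $\varphi_1$ identifies $A$ with the monomial generators of $*\core{\ideal{m}_1}$ and, symmetrically, $\varphi_2$ identifies $B$ with those of $*\core{\ideal{m}_2}$.

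For the inclusion $(*\core{\ideal{m}_1})\cdot k[\Delta]\subseteq *\core{\ideal{m}}$ I would argue as in the proof of Proposition~\ref{p2}. Let $J=(f_1,\dots,f_d)$ be a linearly generated minimal $*$-reduction of $\ideal{m}$, where $d=\dim k[\Delta]=\max\{\dim k[\Delta_1],\dim k[\Delta_2]\}$, and write $f_i=g_i+h_i$ with $g_i$ the part linear in the $x_i$ and $h_i$ the part linear in the $y_j$. By Theorem~\ref{persist}, $\varphi_1(J)=(g_1,\dots,g_d)$ is a $*$-reduction of $\ideal{m}_1$; after passing to a linearly independent subset of the $g_i$, a monomial $\alpha\in *\core{\ideal{m}_1}$ can be written $\alpha=\sum_i\beta_ig_i$ in $k[\Delta_1]$, and since $\alpha\in\ideal{m}_1^2$ by Theorem~\ref{bound} each $\beta_i$ may be taken of positive degree. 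Lifting the $\beta_i$ to elements $\overline{\beta}_i$ in the variables $x_1,\dots,x_n$, we get $\overline{\beta}_ih_i=0$ in $k[\Delta]$ (a polynomial without constant term in the $x_i$ times a linear form in the $y_j$), so $\alpha=\sum_i\overline{\beta}_if_i\in J$. As $J$ was arbitrary, $\alpha\in *\core{\ideal{m}}$, and symmetrically $(*\core{\ideal{m}_2})\cdot k[\Delta]\subseteq *\core{\ideal{m}}$.

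For the reverse inclusion I would show directly that every monomial $\alpha\in *\core{\ideal{m}}$ in the variables $x_1,\dots,x_n$ lies in $*\core{\ideal{m}_1}$. Given an arbitrary minimal $*$-reduction $L=(\ell_1,\dots,\ell_{d_1})$ of $\ideal{m}_1$, with $d_i=\dim k[\Delta_i]$, and a minimal $*$-reduction $(k_1,\dots,k_{d_2})$ of $\ideal{m}_2$, pad the shorter list with zeros and set $L'=(\ell_1+k_1,\ell_2+k_2,\dots)$ in $k[\Delta]$, an ideal with $\max\{d_1,d_2\}=d$ generators. Modulo a minimal prime of $k[\Delta]$ of the first type the $k_j$ die and $L'$ agrees with $L$, and modulo one of the second type the $\ell_i$ die and $L'$ agrees with $(k_1,\dots,k_{d_2})$; so $L'$ is a $*$-reduction of $\ideal{m}$, hence contains a minimal one, and therefore $\alpha\in *\core{\ideal{m}}\subseteq L'$. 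Writing $\alpha=\sum_i\gamma_i(\ell_i+k_i)$ and splitting each $\gamma_i$ into its part $b_i$ in the $x_j$ and its part $b_i'$ in the $y_j$, every term of the expansion other than the $b_i\ell_i$ is divisible by some $y_j$ (using $x_iy_j=0$ and $\ell_i,k_i\in\ideal{m}$); since $\alpha$ is a monomial in the $x_j$, those terms must cancel, so $\alpha=\sum_i b_i\ell_i\in L$. As $L$ was arbitrary, $\alpha\in *\core{\ideal{m}_1}$, and the symmetric argument completes the proof.

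The step I expect to be the main obstacle is the inclusion $(*\core{\ideal{m}_i})\cdot k[\Delta]\subseteq *\core{\ideal{m}}$: it depends on the bound $*\core{\ideal{m}_i}\subseteq\ideal{m}_i^2$ in order to force the coefficients $\overline{\beta}_i$ to have no constant term, which is exactly what kills the cross terms $\overline{\beta}_ih_i$, and, as in Proposition~\ref{p2}, on being able to reduce to \emph{linearly generated} minimal $*$-reductions of $\ideal{m}$. A secondary point requiring care is the bookkeeping around the three dimensions $d_1,d_2,d$ when extending a minimal $*$-reduction of $\ideal{m}_1$ or $\ideal{m}_2$ to one of $\ideal{m}$, and verifying that the extension really does reduce to the right thing modulo each of the two families of minimal primes of $k[\Delta]$.
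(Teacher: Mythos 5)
Your overall strategy is the one the paper uses: reduce to monomials via the group-action observation, note that every nonzero monomial of $k[\Delta]$ is pure in the $x$'s or pure in the $y$'s, prove $*\core{\ideal{m}}\subseteq(*\core{\ideal{m}_1})\cdot k[\Delta]+(*\core{\ideal{m}_2})\cdot k[\Delta]$ by assembling a $*$-reduction of $\ideal{m}$ with generators $\ell_i+k_i$ (padding the shorter list with zeros) and splitting coefficients into their $x$- and $y$-parts, and prove the reverse inclusion by pushing a minimal $*$-reduction of $\ideal{m}$ down to $k[\Delta_1]$ and lifting a coefficient representation of $\alpha$. Your device of choosing the coefficients $\beta_i$ of positive degree, using $*\core{\ideal{m}_1}\subseteq\ideal{m}_1^2$ from Theorem \ref{bound}, is in fact more careful than the paper at the corresponding point: it is exactly what prevents constant terms of the $\beta_i$ from reintroducing the pure-$y$ parts $h_i$ and is needed to justify the paper's bare assertion that $\sum\overline{a_i}f_i$ is ``the monomial preimage of $\alpha$.''

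There is, however, a genuine gap in your proof of $(*\core{\ideal{m}_1})\cdot k[\Delta]\subseteq*\core{\ideal{m}}$: you quantify only over \emph{linearly generated} minimal $*$-reductions $J$ of $\ideal{m}$. That establishes $\alpha\in l*\core{\ideal{m}}$, and since $*\core{\ideal{m}}\subseteq l*\core{\ideal{m}}$ the containment you would need goes the wrong way; the paper proves $*\core{\ideal{m}}=l*\core{\ideal{m}}$ only when $\dim k[\Delta]\leq 2$ (Corollary \ref{cor2}), whereas Proposition \ref{p3} is stated for complexes of arbitrary dimension. Nor does the fact that the linear parts of the generators of a minimal $*$-reduction again generate a minimal $*$-reduction rescue the argument, because the linearized ideal neither contains nor is contained in the original, so membership in one says nothing about membership in the other. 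The paper's proof of this direction works with an \emph{arbitrary} minimal $*$-reduction $I=(f_1,\ldots,f_d)$ of $\ideal{m}$: it observes that $\varphi_1(I)$ is (or contains) a minimal $*$-reduction of $\ideal{m}_1$, writes $\alpha=\sum a_i\varphi_1(f_i)$ in $k[\Delta_1]$, and lifts the $a_i$ to polynomials in the $x$-variables, using that all cross terms $x_iy_j$ vanish. You should rework your step along those lines, taking care (as you already do in the linear case, but now for possibly non-linear $g_i=\varphi_1(f_i)$) to control the constant terms of the lifted coefficients so that the pure-$y$ parts of the $f_i$ actually disappear from $\sum\overline{a_i}f_i$.
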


\begin{proof}
Let $k[\Delta_1]=k[x_1,\ldots,x_n]/I_{\Delta_1}$ and $k[\Delta_2]=k[y_1,\ldots,y_m]/I_{\Delta_2}$ and let $d_1=\dim k[\Delta_1]$ and $d_2=\dim k[\Delta_2]$. Without loss of generality suppose $d=d_1\geq d_2$. Let $P_1,\ldots, P_s$ be the minimal primes of $k[\Delta_1]$ and $Q_1,\ldots,Q_t$ be the minimal primes of $k[\Delta_2]$.  The minimal primes of $k[\Delta]$ are therefore $P_{i}k[\Delta]+(y_1,\ldots,y_m)$ for $1\leq i\leq s$ and $Q_{j}k[\Delta]+(x_1,\ldots,x_n)$ for $1\leq j\leq t$.  If $I_1=(g_1,\ldots,g_d)$ is a minimal $*$-reduction of $\ideal{m}_1$ in $k[\Delta_1]$ and $I_2=(h_1,\ldots, h_{d_2})$ is a minimal $*$-reduction of $\ideal{m}_2$ in $k[\Delta_2]$, we can make a minimal $*$-reduction in $k[\Delta]$ the following way: let $\overline{g_i}$ be the preimage of $g_i$ in the natural surjection $\varphi_1:k[\Delta]\twoheadrightarrow k[\Delta_1]$ that has no additional monomials as terms and let $\overline{h_j}$ be defined analogously for $h_j$ across the surjection $\varphi_2:k[\Delta]\twoheadrightarrow k[\Delta_2]$.  Define polyonomials $f_1,\ldots, f_d$ of $k[\Delta]$ the following way:
\[f_i=\begin{cases}
\overline{g_i}+\overline{h_i} & \text{for }1\leq i\leq d_2\\
\overline{g_i} & \text{for }d_2+1<i\leq d.
\end{cases}\]Then $I=(f_1,\ldots,f_d)$ is a minimal $*$-reduction of $\ideal{m}$ in $k[\Delta]$.

Let $\alpha$ be a nonzero monomial in $*\core{\ideal{m}}$ and let $I$ be a minimal $*$-reduction of $\ideal{m}$ of the type defined above.  Either $\alpha$ is a product of the $x_i$ variables or it is a product of the $y_j$ variables.  Suppose the former.  Then for the ideal $I=(f_1,\ldots,f_d)$ of $k[\Delta]$, there exist $a_1,\ldots, a_d$ in $k[\Delta]$ such that \[\alpha=a_1f_1+\cdots +a_df_d.\] Since any product of an $x$ and a $y$ is 0, we can assume the individual monomials of all the $f$ and $g$ polynomials are of one of the two types of variables.  Since $\alpha$ is all $x$ variables, the sum of the $y$ variables in $a_1f_1+\cdots a_df_d$ is 0.  Specifically if $\overline{g_i}$ is the part of $f_i$ with $x$ variables and $b_i$ is the part of $a_i$ with $x$ variables, \[\alpha=b_1\overline{g_1}\cdots+b_d\overline{g_d}\] which shows $\alpha$ to be in the ideal $I'=(\overline{g_1},\dots, \overline{g_d})$ of $k[\Delta]$ and the image of $\alpha$ in $k[\Delta_1]$ is in the minimal $*$-reduction $I_1=(g_1,\ldots,g_d)$ of $k[\Delta_1]$. Since this works for all such $I_1$, $\alpha$ is a monomial in $*\core{\ideal{m}_1}$.  Similarly, if $\beta\in *\core{m}$ is a monomial in only the $y$ variables, the image of $\varphi_2(\beta)\in *\core{\ideal{m}_2}$.  Thus \[*\core{\ideal{m}}\subseteq (*\core{\ideal{m_1}})\cdot k[\Delta]+(*\core{\ideal{m}_2})\cdot k[\Delta].\]

Let $I=(f_1,\ldots,f_d)$ be a minimal $*$-reduction of $\ideal{m}$ in $k[\Delta]$. Then $\varphi_1(I)$ is a minimal $*$-reduction of $\ideal{m}_1$ in $k[\Delta_1]$ and $\varphi_2(I)$ is a $*$-reduction of $\ideal{m}_2$ in $k[\Delta_2]$.  Let $\varphi_1(f_i)=g_i$ and $\varphi_2(f_i)=h_i$ and let $\alpha$ be a monomial in $*\core{\ideal{m}_1}$.  Then there exist polynomial $a_1,\ldots, a_d$ in $k[\Delta_1]$ such that $\alpha=a_1g_1+\cdots +a_dg_d$. Let $\overline{a_i}$ be the preimage of $a_i$ in $k[\Delta]$ with no additional monomial terms. Then $\overline{a_1}f_1+\cdots+\overline{a_d}f_d$ is the monomial preimage of $\alpha$ in $k[\Delta]$.  Thus $(*\core{\ideal{m}_1})\cdot k[\Delta]\subseteq *\core{\ideal{m}}$.  Similarly, $(*\core{\ideal{m}_2})\cdot k[\Delta]\subseteq *\core{\ideal{m}}$.  Hence \[(*\core{\ideal{m}_1})\cdot k[\Delta]+(*\core{\ideal{m}_2})\cdot k[\Delta]\subseteq *\core{\ideal{m}},\] which means \[*\core{\ideal{m}}=(*\core{\ideal{m}_1})\cdot k[\Delta]+(*\core{\ideal{m}_2})\cdot k[\Delta].\]
\end{proof}

We can infer from propositions \ref{p1}, \ref{p2}, and \ref{p3} that we need only explicitly calculate the $*\core{\ideal{m}}$ in when a simplicial complex connected. For the rest of this paper, we will assume that no simplicial complexes contain disjoint elements unless otherwise stated.

\begin{prop}
\label{p4}
Let $\Delta$ be a simplicial complex with exactly two distinct facets.  Then $*\core{\ideal{m}}=\ideal{m}^{2}$.
\end{prop}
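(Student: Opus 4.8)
The plan is to sandwich $*\core{\ideal{m}}$ between $\ideal{m}^2$ on both sides. The inclusion $*\core{\ideal{m}}\subseteq\ideal{m}^2$ is already available from Theorem \ref{bound}: a complex with two distinct facets is proper (a simplex has a unique facet), so $d<n$ and that theorem applies. All the work is therefore in the reverse inclusion, and in fact I would prove the stronger statement that $\ideal{m}^2\subseteq I$ for \emph{every} $*$-reduction $I$ of $\ideal{m}$ — not merely the minimal ones — which of course yields $\ideal{m}^2\subseteq*\core{\ideal{m}}$ at once.

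First I would pin down the structure of $k[\Delta]$. Writing $F_1\neq F_2$ for the two facets, the decomposition $I_\Delta=\mathfrak{B}_{F_1}\cap\mathfrak{B}_{F_2}$ from Section \ref{sr} applies; since $F_1$ and $F_2$ are incomparable (both maximal), the primes $\mathfrak{B}_{F_1}$ and $\mathfrak{B}_{F_2}$ are incomparable as well, so they are exactly the two minimal primes $P_1,P_2$ of $k[\Delta]$. The one structural fact I need is $P_1P_2\subseteq P_1\cap P_2=I_\Delta$, i.e. the product of the two minimal primes is the zero ideal of $k[\Delta]$.

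Next I would translate "$I$ is a $*$-reduction of $\ideal{m}$" into an exact ideal identity modulo each $P_i$. Since $\ideal{m}^*=\ideal{m}$ (primes are tightly closed), a $*$-reduction $I\subseteq\ideal{m}$ satisfies $I^*=\ideal{m}$; by Theorem \ref{inclusion} this forces $\ideal{m}k[\Delta]/P_i\subseteq (Ik[\Delta]/P_i)^*$, and because $k[\Delta]/P_i$ is a polynomial ring, Theorem \ref{poly} gives $(Ik[\Delta]/P_i)^*=Ik[\Delta]/P_i$; together with $I\subseteq\ideal{m}$ this yields $Ik[\Delta]/P_i=\ideal{m}k[\Delta]/P_i$, that is, $I+P_i=\ideal{m}$ in $k[\Delta]$ for $i=1,2$. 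Now the computation
\[\ideal{m}^2=(I+P_1)(I+P_2)=I^2+IP_2+P_1I+P_1P_2\subseteq I,\]
since the first three summands lie in $I$ and $P_1P_2=0$. As $I$ was an arbitrary $*$-reduction, $\ideal{m}^2\subseteq*\core{\ideal{m}}$, and combined with Theorem \ref{bound} this gives $*\core{\ideal{m}}=\ideal{m}^2$.

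There is no genuine obstacle here; the only point requiring care is the bookkeeping in the third paragraph — ensuring that "$*$-reduction'' really delivers the \emph{exact} equality $I+P_i=\ideal{m}$ rather than an equality only up to tight closure, which is precisely where the fact that polynomial rings are tightly closed (Theorem \ref{poly}) is doing the work. Everything else reduces to the single ideal-multiplication identity above, which succeeds exactly because there are only two minimal primes and their product is zero.
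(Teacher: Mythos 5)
Your proof is correct, and it takes a genuinely different route from the paper's. The paper computes the test ideal $\tau=P+Q$ (the sum of the annihilators of the two minimal primes), invokes the general bound $\tau\ideal{m}\subseteq *\core{\ideal{m}}$ to capture all of $\ideal{m}^2$ except the products $z_iz_j$ of the variables shared by the two facets, and then recovers those by diagonalizing the generators of an arbitrary minimal $*$-reduction and writing $z_iz_j=z_i(z_j+h_j)-z_ih_j$. You bypass both the test ideal and the diagonalization: the whole lower bound falls out of the single identity $\ideal{m}^2=(I+P_1)(I+P_2)\subseteq I+P_1P_2=I$, where the exact equalities $I+P_i=\ideal{m}$ are correctly extracted from Theorems \ref{inclusion} and \ref{poly} (this is the one step that genuinely needs justification, and you supply it), and $P_1P_2\subseteq P_1\cap P_2=I_\Delta$ is automatic. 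Your argument is shorter, self-contained (it does not rely on the cited lower bound from \cite{FoVaVr-*core}), and proves the stronger statement that $\ideal{m}^2$ lies in \emph{every} $*$-reduction, not just the minimal ones. It also generalizes for free: in any Stanley--Reisner ring with $s$ facets the product of all $s$ minimal primes is zero (the ring is reduced), so the same computation yields $\ideal{m}^s\subseteq *\core{\ideal{m}}$, a bound incomparable in general with the paper's $\ideal{m}^{d+1}$. What the paper's route buys instead is an explicit accounting of which part of $\ideal{m}^2$ the test-ideal bound already detects and which part requires extra work, which fits the theme of Section \ref{bds}; your route buys brevity and generality.
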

\begin{proof}
Let $\Delta=\Delta_1\cup\Delta_2$ with the vertices of $\Delta_1$ associated to the variable set $\{x_1,\ldots,x_n,z_1,\ldots,z_r\}$ and the vertices of $\Delta_2$ associated to the variable set $\{y_{1},\ldots,y_m,z_1,\ldots,z_r\}$ where $\{z_1,\ldots,z_r\}$ is the set of variables associated to the face $\Delta_1\cap\Delta_2$.  The defining ideal of the simplicial complex is $I_{\Delta}=(\{x_{i}y_{j} : 1\leq i\leq n\text{ and }1 \leq j\leq m\})$.  The ring $k[\Delta]$ has two minimal primes: $P=(y_1,\ldots,y_m)$ and $Q=(x_1,\ldots,x_n)$.  Therefore, the annihilator of $P$ is $Q$ and the annihilator of $Q$ is $P$.

The test ideal $\tau$ is generated by the generators of the two annihilators of the minimal primes i.e. $\tau=(x_{1},\ldots,x_{n},y_{1},\ldots,y_{m})$.  Thus $$\tau\cdot\ideal{m}= (P+Q)\cdot\ideal{m}=P^2+P\cdot Q+Q^2+P\cdot(z_1,\ldots,z_r)+Q\cdot(z_1,\ldots,z_r)$$ is contained in $*\core\ideal{m}$.  To show that $\ideal{m}^2\subseteq *\core{\ideal{m}}$, we need only show that $(z_1,\ldots,z_r)^2\subseteq *\core\ideal{m}$.

Without loss of generality, suppose that $\dim\Delta_1\geq\dim\Delta_2$.  Then the dimension of $\Delta$ is $n+r-1$.  Then for any minimal $*$-reduction $I$ of $\ideal{m}$, $$I=(x_1+g_1,\ldots,x_n+g_n,z_1+h_1,\ldots, z_r+h_r)$$ where each $g$ and each $h$ are polynomials with linear terms in $y_1,\ldots,y_m$ and non linear terms divisible by at least one $y_{i}$.  Since $P\cdot (z_1,\ldots,z_r)\subseteq *\core{\ideal{m}}$, For any choice of $i,j$ between $1$ and $r$, $z_ih_j\in I$, so $$z_iz_j=z_iz_j+z_ih_j-z_ih_j=z_i(z_j+h_j)-z_ih_j\in I.$$  Thus $(z_1,\ldots,z_r)^2\subseteq *\core{\ideal{m}}$ and $*\core{\ideal{m}}=\ideal{m}^2$.
\end{proof}

At this point we must introduce a new notion which we will call a linear $*\core$ of an ideal which we will abbreviated $l*\core$. We define the linear $*\core$ of $I$ to be the intersection of all linear minimal $*$-reductions of $I$. The reason we are introducing this notion is that the non linear parts of the generators of a $*$-reduction and a level of complexity that we can avoid using only linear generators. What should be immediately clear is that $*\core I\subseteq l*\core I$.  If in any case the reverse containment is true, we can discard the non linearly generated reductions when calculating $*\core I$.

For this discussion, let $R=k[\Delta]$ be a Stanley-Reisner ring of dimension $d$. Let $J=(f_1,\ldots,f_d)$ be linearly generated and $I=(f_1+g_1,\ldots, f_d+g_d)$ where $g_i$ has no linear terms such that $I^*=J^*=\ideal{m}$.  It is important to note here, that the linear parts of the generators of a minimal $*$-reduction themselves generate a minimal $*$-reduction.  For any minimal prime $P=(x_{m+1},\ldots,x_n)$, we can rewrite the generators of $J$ and $I$ to be \[J=(x_1+f_1',x_2+f_2',\ldots,x_m+f_m',\ldots,f_d')\]
\[I=(x_1+f_1'+g_1',x_2+f_2'+f_2',\ldots,x_m+f_m'+g_m',\ldots,f_d'+g_d')\]and we will rename $x_i+f_i'$ for $1\leq i\leq m$ and $f_i'$ for $m+1\leq j\leq d$ to be $h_i$. If $a$ is a monomial of degree $q$ in $l*\core{\ideal{m}}$ then $a=b_1+\cdots+b_dh_d$ for $b_1,\ldots, b_d$ which are individually either 0 or homogenous of degree $q-1$. If we carry the same $b_i$ over to $I$, we get that \[b_1(h_1+g_1')+\cdots+b_d(h_d+g_d')=a+b_1g_1'+\cdots+b_dg_d'\] where each $b_ig_i'$ is a polynomial with terms of degree $q+1$ or larger. With out knowing much specifically about the $b_ig_i'$ we get the following lemma and two corollaries:
\begin{lemma}
If $I$ contains all monomials of degre $q+1$, then $I$ also contains all monomials of degree $q$ that are in $J$.
\end{lemma}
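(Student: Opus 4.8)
My plan is to run the bookkeeping computation from the paragraph preceding the lemma, but starting from an arbitrary degree-$q$ monomial $a\in J$ rather than only from $a\in l*\core{\ideal{m}}$. The first point is that the expansion $a=b_1h_1+\cdots+b_dh_d$, with each $b_i$ either zero or homogeneous of degree $q-1$, is available for any such $a$: since $J=(h_1,\dots,h_d)$ is generated by the linear forms $h_i$, one may write $a=c_1h_1+\cdots+c_dh_d$ for some $c_i\in k[\Delta]$ and then pass to the degree-$q$ homogeneous component of both sides. Because each $h_i$ is homogeneous of degree one and $a$ is homogeneous of degree $q$, this replaces each $c_i$ by its degree-$(q-1)$ part; call these parts $b_i$. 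So I fix such an $a$ and such $b_i$.

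Transporting the $b_i$ to the corresponding generators of $I$ produces exactly the identity recorded before the statement,
\[
b_1(h_1+g_1')+\cdots+b_d(h_d+g_d') \;=\; a+\bigl(b_1g_1'+\cdots+b_dg_d'\bigr),
\]
and the left-hand side lies in $I$ by construction. Since $I\subseteq I^*=\ideal{m}$, each generator $f_i+g_i$ of $I$ lies in $\ideal{m}$, so each $g_i'$ has neither a constant nor a linear term; hence every term of $g_i'$ has degree at least $2$, and therefore every term of $b_ig_i'$ has degree at least $(q-1)+2=q+1$. In other words $b_1g_1'+\cdots+b_dg_d'\in\ideal{m}^{q+1}$.

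Now I would feed in the hypothesis. If $I$ contains every monomial of degree $q+1$, then $I$ contains a generating set of $\ideal{m}^{q+1}$, so $\ideal{m}^{q+1}\subseteq I$, and in particular $b_1g_1'+\cdots+b_dg_d'\in I$. Rearranging the displayed identity, $a$ is the difference of two elements of $I$, so $a\in I$; as $a$ was an arbitrary degree-$q$ monomial of $J$, the lemma follows. I do not expect a genuine obstacle here, since the substantive computation is already in place. The only points that require a little care are the homogeneity of the $b_i$ (which relies on $k[\Delta]$ being graded and $J$ homogeneous, and is what keeps all terms of $\sum b_ig_i'$ in degree $\ge q+1$) and the upgrade from ``$I$ contains all degree-$(q+1)$ monomials'' to ``$\ideal{m}^{q+1}\subseteq I$,'' which is precisely what allows $I$ to absorb all of $\sum b_ig_i'$ rather than merely its degree-$(q+1)$ part.
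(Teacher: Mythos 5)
Your argument is correct and is essentially the paper's own: the paper proves this lemma only implicitly, via the displayed identity $\sum b_i(h_i+g_i')=a+\sum b_ig_i'$ in the paragraph immediately preceding the statement, and your write-up is exactly that computation with the missing details supplied (homogeneity of the $b_i$, the degree bound on the terms of $\sum b_ig_i'$, and the passage from ``all degree-$(q+1)$ monomials lie in $I$'' to $\ideal{m}^{q+1}\subseteq I$, which is needed because the terms of $\sum b_ig_i'$ may have degree strictly greater than $q+1$). No substantive difference from the paper's route.
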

\begin{corollary}
\label{cor}
All monomials of degree $d$ that are in $I$ are also in $J$.
\end{corollary}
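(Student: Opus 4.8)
\emph{Proposal.} The plan is to run the mechanism of the Lemma just proved at $q=d$ and read off the top (degree $d$) piece, the real content being the handling of the nonlinear tails. The one input needed is that $I$ (and $J$) contain every monomial of degree $d+1$: since $I^{*}=J^{*}=\ideal m$, each of $I$ and $J$ contains $*\core{\ideal m}$, and $\ideal m^{d+1}\subseteq *\core{\ideal m}$ by Theorem~\ref{bound}; as $\ideal m^{d+1}$ is generated by the degree-$(d+1)$ monomials, both ideals contain all of them, indeed all homogeneous elements of degree $\ge d+1$. Hence we may pass to the graded Artinian ring $A:=k[\Delta]/\ideal m^{d+1}$: a degree-$d$ monomial $a$ lies in $I$ iff its image lies in $\overline I$, and, since $J=(h_1,\dots,h_d)$ is a homogeneous ideal, $a\in J$ iff $\overline a\in\overline J$.

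So let $a$ be a monomial of degree $d$ with $a\in I$ and write $\overline a=\sum_{i=1}^{d}c_i\,(h_i+\overline{g_i'})$ in $A$, with each $c_i$ supported in degrees $\le d-1$. Comparing the degree-$1$ parts of the two sides — the left side being homogeneous of degree $d\ge 2$, the cases $d=1,2$ being immediate since then no nonlinear tail can contribute in degree $d$ — gives $\sum_i (c_i)_0\,h_i=0$; and $h_1,\dots,h_d$ are $k$-linearly independent (a dependence would let $J$, hence $\ideal m$, have a $*$-reduction with fewer than $d$ generators, contradicting $*\spread\ideal m=d$ from Theorem~\ref{spreaddn}, using that $k[\Delta]$ has no linear relations because $I_\Delta$ is generated in degrees $\ge 2$), so every constant term $(c_i)_0$ vanishes. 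Extracting the degree-$d$ part of the expansion now gives
\[
\overline a=\sum_{i}(c_i)_{d-1}h_i+P,\qquad P:=\sum_{i}\sum_{f=2}^{d-1}(c_i)_{d-f}\,(g_i')_{f},
\]
so $\overline a-P\in\overline J$, and the corollary is reduced to the claim $P\in\overline J$ — equivalently, that $h_1,\dots,h_d$ form a standard basis of $I$, i.e.\ $\operatorname{in}(I)=J$, so that the initial form (here, all) of a homogeneous element of $I$ lies in $J$.

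The absorption of $P$ is the step I expect to require the most care. The leverage is that, $\overline a$ being homogeneous of top degree $d$, the components of the expansion in each intermediate degree $\ell=2,\dots,d-1$ vanish, giving the relations $\sum_i (c_i)_{\ell-1}h_i=-\sum_i\sum_{f=2}^{\ell}(c_i)_{\ell-f}(g_i')_f$; substituting these back into $P$ rewrites each of its slices through the $h_i$ at the cost of terms of strictly higher degree, and a descending induction on degree — patterned on the induction in the proof of Theorem~\ref{lower} — pushes the residual terms past degree $d$, where $\ideal m^{d+1}\subseteq J$ kills them and the process halts. The only way this could stall is an unexpected low-degree initial form produced by a syzygy among $h_1,\dots,h_d$; but such a form has degree strictly larger than that of the syzygy, so the induction keeps climbing. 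In the Cohen--Macaulay case $h_1,\dots,h_d$ is a regular sequence, the only linear syzygies are Koszul, $P$ lands in $(h_i,h_j)\subseteq\overline J$ slice by slice, $\operatorname{in}(I)=J$ outright, and the step is immediate; making the argument unconditional on Cohen--Macaulayness is the heart of the matter.
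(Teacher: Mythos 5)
Your proposal proves the statement as literally printed --- every degree-$d$ monomial of $I$ lies in $J$ --- and for $d\ge 3$ it has a genuine gap, which you yourself flag: the absorption of
\[
P=\sum_{i}\sum_{f=2}^{d-1}(c_i)_{d-f}\,(g_i')_{f}
\]
into $\overline J$. Reducing this to ``$h_1,\dots,h_d$ is a standard basis of $I$'' is a real strengthening that the paper never establishes, and your descending induction does not close: the intermediate-degree relations you substitute back only rewrite $P$ modulo $J$ up to further terms of the same shape, and without control over the non-Koszul syzygies of $h_1,\dots,h_d$ (you resolve only the Cohen--Macaulay case, whereas $k[\Delta]$ need not be Cohen--Macaulay --- e.g.\ two $2$-simplices glued at a vertex gives $d=3$ with depth $2$) the process is not shown to terminate inside $J$ before degree $d+1$. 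The preliminary steps are fine --- both ideals contain all monomials of degree $\ge d+1$ by Theorem~\ref{bound}, the $h_i$ are $k$-linearly independent, and for $d\le 2$ the sum defining $P$ is empty so your argument is actually complete there --- but for general $d$ the statement is not proved.

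For comparison, the paper's entire proof is one sentence: apply the Lemma at $q=d$, the hypothesis being supplied by $\ideal{m}^{d+1}\subseteq *\core{\ideal{m}}\subseteq I$. But the Lemma's conclusion is the \emph{opposite} containment --- monomials of degree $q$ lying in $J$ lie in $I$ --- which follows at once from the identity $\sum b_i(h_i+g_i')=a+\sum b_ig_i'$, since the correction terms have degree $\ge d+1$ and hence already lie in $I$. That is also the direction actually consumed by Corollary~\ref{cor2} (to get $l*\core{\ideal{m}}\subseteq *\core{\ideal{m}}$ one must push monomials from every linear reduction $J$ into every reduction $I$). So the corollary's $I$ and $J$ appear to be transposed relative to the Lemma; read with the roles swapped, the whole proof is already contained in your opening paragraph and the rest of your argument is unnecessary. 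Read literally, you have correctly identified that the claim is the much harder ``initial ideal'' direction, your argument for it is incomplete, and the paper's one-line justification does not establish it either.
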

\begin{proof}
This is a direct consequence of $*\core\ideal{m}$ being bound below by $m^{d+1}$ and the lemma.
\end{proof}
\begin{corollary}
\label{cor2}
If $\dim k[\Delta]\leq 2$, then $*\core\ideal{m}=l*\core\ideal{m}$.
\end{corollary}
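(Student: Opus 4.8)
The plan is to prove the only nontrivial inclusion, $l*\core{\ideal{m}} \subseteq *\core{\ideal{m}}$ (the reverse being immediate, since a linear minimal $*$-reduction is in particular a minimal $*$-reduction). At the outset I would reduce to the case that $\Delta$ is proper: if $\Delta$ is a simplex then $k[\Delta]$ is regular, $\ideal{m}$ is tightly closed, and trivially $*\core{\ideal{m}} = l*\core{\ideal{m}} = \ideal{m}$, so we may assume $d = \dim k[\Delta] < n$. Then I would record two preliminary facts. First, the torus $(k^\times)^n$ acts on $k[\Delta]$ by scaling the variables, sends linear forms to linear forms, and by Corollary \ref{persistcor} (together with $g(\ideal{m}) = \ideal{m}$) sends linear minimal $*$-reductions of $\ideal{m}$ to linear minimal $*$-reductions; hence it fixes $l*\core{\ideal{m}}$, which is therefore a monomial ideal. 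Second, if $I = (f_1 + g_1, \ldots, f_d + g_d)$ is an arbitrary minimal $*$-reduction (each $f_i$ the linear part of a generator, $g_i$ the rest), then $(f_1, \ldots, f_d)$ is again a linear minimal $*$-reduction, as noted in the discussion of the linear $*$-core above, so $l*\core{\ideal{m}} \subseteq (f_1, \ldots, f_d)$.

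The heart of the argument is to show $l*\core{\ideal{m}} \subseteq \ideal{m}^2$, that is, that no variable $x_i$ lies in $l*\core{\ideal{m}}$. Suppose $x_i \in l*\core{\ideal{m}}$. Then $x_i$ lies in every linear minimal $*$-reduction $J = (f_1, \ldots, f_d)$, and writing $x_i = \sum c_j f_j$ in $k[\Delta]$ and passing to degree-one parts — legitimate because $I_\Delta$ contains no nonzero linear form — gives $x_i \in \operatorname{span}_k\{f_1, \ldots, f_d\}$ for every such $J$. To contradict this I would produce a single good $J$ by a genericity argument over the infinite field $k$: the tuples of coefficients $(a_{jl})$ for which the forms $\sum_l a_{jl} x_l$, $1 \le j \le d$, generate a minimal $*$-reduction of $\ideal{m}$ cut out a nonempty Zariski-open set in $\mathbb{A}^{dn}$ (this is exactly the content of the proofs of Theorems \ref{ideal} and \ref{spreaddn}), while the tuples for which the span of those forms contains the fixed vector $e_i$ cut out a proper closed determinantal locus, proper precisely because $d < n$. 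A $k$-point of the (nonempty) intersection of the two open sets gives a linear minimal $*$-reduction whose generators span a $d$-space missing $x_i$, the desired contradiction. So $l*\core{\ideal{m}}$ is a monomial ideal containing no variable, hence contained in $\ideal{m}^2$.

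With these in hand the conclusion is a short case analysis. Let $a$ be a monomial generator of $l*\core{\ideal{m}}$ and $I = (f_1 + g_1, \ldots, f_d + g_d)$ an arbitrary minimal $*$-reduction; I want $a \in I$. By the preliminaries $a \in (f_1, \ldots, f_d)$ and $\deg a \ge 2$. If $\deg a \ge d + 1$ then $a \in \ideal{m}^{d+1} \subseteq *\core{\ideal{m}} \subseteq I$ by Theorem \ref{bound}. Since $d \le 2$, the only other possibility is $\deg a = 2 = d$, and then $\ideal{m}^{d+1} \subseteq *\core{\ideal{m}} \subseteq I$ shows $I$ contains all monomials of degree $d+1$, so the Lemma immediately preceding Corollary \ref{cor}, applied to $I$, the linear reduction $(f_1, \ldots, f_d)$, and $q = d$, shows $I$ contains every degree-$d$ monomial of $(f_1, \ldots, f_d)$, in particular $a$. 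Thus every monomial generator of $l*\core{\ideal{m}}$ lies in every minimal $*$-reduction, i.e. in $*\core{\ideal{m}}$, and the two ideals coincide.

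The step I expect to fight with is the middle one: ruling out that some variable lies in every linear minimal $*$-reduction. I do not see a clean combinatorial reason, so the intended route is the dimension count above, which reuses the same "generic linear forms work" principle behind Theorem \ref{ideal} together with the fact that a nonempty Zariski-open subset of affine space over an infinite field has a rational point. Once that is secured, the hypothesis $d \le 2$ is exactly what makes the bounds $\ideal{m}^{d+1} \subseteq *\core{\ideal{m}}$ and $l*\core{\ideal{m}} \subseteq \ideal{m}^2$ meet at the single degree $d$, where the existing Lemma finishes the job without modification.
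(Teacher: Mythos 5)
Your proof is correct and follows the same strategy as the paper's: combine $\ideal{m}^{d+1}\subseteq *\core\ideal{m}$ with the lemma preceding Corollary \ref{cor} to transfer the degree-$d$ monomials of the linear part $(f_1,\ldots,f_d)$ into the full minimal $*$-reduction $I$. The only difference is that you explicitly justify the monomiality of $l*\core\ideal{m}$ and the containment $l*\core\ideal{m}\subseteq\ideal{m}^2$ (via the torus action and a genericity count), steps the paper's two-line proof leaves implicit.
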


\begin{proof}
We know the case for $\dim k[\Delta]=1$ already, and when $\dim k[\Delta]=2$, we have all the degree 3 monomials and we get the degree 2 monomials we need from Corollary \ref{cor}.
\end{proof}

What Corollary \ref{cor2} is saying is that we can compute the $*\core \ideal{m}$ of any simple graph by intersecting only the linearly generated minimal $*$-reductions of $\ideal{m}$.

\begin{prop}
\label{pcycle}
Let the simplicial complex $\Delta$ be a cycle graph.  Then $*\core{\ideal{m}}=\ideal{m}^3$.
\end{prop}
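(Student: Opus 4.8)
The plan is to combine the bounds of Theorem~\ref{bound} with the reduction to linear $*$--reductions in Corollary~\ref{cor2}, and then do an explicit degree--two computation. A cycle graph is one--dimensional, so $d:=\dim k[\Delta]=2$ and Theorem~\ref{bound} gives $\ideal{m}^{3}\subseteq *\core{\ideal{m}}\subseteq\ideal{m}^{2}$. Since $\ideal{m}=\ideal{m}^{*}$ is generated by monomials, $*\core{\ideal{m}}$ is a monomial ideal by the relevant observation in Section~\ref{bds}, and a monomial ideal lying between $\ideal{m}^{3}$ and $\ideal{m}^{2}$ equals $\ideal{m}^{3}$ exactly when it contains no nonzero monomial of degree $2$. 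The nonzero degree--two monomials of $k[\Delta]$ are the squares $x_{i}^{2}$ and the edge monomials $x_{i}x_{j}$ over edges $\{v_{i},v_{j}\}$ (products over non--edges vanish), so it is enough to show none of these lies in $*\core{\ideal{m}}$. By Corollary~\ref{cor2}, $*\core{\ideal{m}}=l*\core{\ideal{m}}$, so I may work only with linearly generated minimal $*$--reductions, each of which has exactly $d=2$ generators (Observation~\ref{observ} and Corollary~\ref{subideal}).

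Next I would set up those reductions explicitly. Index the vertices cyclically $1,\dots,n$; the facets are the edges $\{v_{i},v_{i+1}\}$ (indices mod $n$), so the minimal primes are $\mathfrak{B}_{\{v_{i},v_{i+1}\}}=(x_{k}:k\ne i,i+1)$ with $k[\Delta]/\mathfrak{B}_{\{v_{i},v_{i+1}\}}\cong k[x_{i},x_{i+1}]$. By the diagonalization discussion of Section~\ref{reduc} together with Theorems~\ref{poly} and \ref{inclusion}, a linear ideal $I=(f_{1},f_{2})$, $f_{s}=\sum_{t}a_{s,t}x_{t}$, is a minimal $*$--reduction of $\ideal{m}$ iff each consecutive minor $D_{i}:=a_{1,i}a_{2,i+1}-a_{1,i+1}a_{2,i}$ is nonzero; such $I$ exist by the genericity argument of Theorem~\ref{ideal} pushed along $k[\Delta_{2,n}]\twoheadrightarrow k[\Delta]$ as in Theorem~\ref{spreaddn}. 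In the monomial basis $e_{i}:=x_{i}^{2}$, $g_{i}:=x_{i}x_{i+1}$ of $k[\Delta]_{2}$ one computes $x_{k}f_{s}=a_{s,k-1}g_{k-1}+a_{s,k}e_{k}+a_{s,k+1}g_{k}$, so the degree--two part of $I$ is $V:=\langle x_{k}f_{s}:1\le k\le n,\ s=1,2\rangle$. No column $(a_{1,k},a_{2,k})$ can vanish (else $D_{k}=0$), so the projection $V\to\langle e_{1},\dots,e_{n}\rangle$ is onto and its kernel $V\cap\langle g_{1},\dots,g_{n}\rangle$ is spanned by the vectors $w_{k}:=a_{2,k}x_{k}f_{1}-a_{1,k}x_{k}f_{2}=D_{k-1}g_{k-1}-D_{k}g_{k}$. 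A linear form killing every $w_{k}$ must send $g_{j}\mapsto\lambda/D_{j}$ for a common constant $\lambda$, so
\[
V\cap\langle g_{1},\dots,g_{n}\rangle=\bigl\{\textstyle\sum_{i}y_{i}g_{i}\ :\ \textstyle\sum_{i}y_{i}/D_{i}=0\bigr\}.
\]

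From this I get both cases. For an edge monomial $g_{i}=x_{i}x_{i+1}$: it violates the defining equation ($\sum_{j}y_{j}/D_{j}=1/D_{i}\ne0$), and since $g_{i}$ has zero $e$--component, $x_{i}x_{i+1}\in I$ would force $g_{i}\in V\cap\langle g\rangle$ --- a contradiction. Thus no edge monomial lies in any linear minimal $*$--reduction, hence not in $l*\core{\ideal{m}}$. For a square $x_{i}^{2}$: taking (say) $a_{1,i}\ne0$, one has $x_{i}^{2}\in I$ iff $e_{i}-\tfrac1{a_{1,i}}x_{i}f_{1}=-\tfrac1{a_{1,i}}(a_{1,i-1}g_{i-1}+a_{1,i+1}g_{i})$ lies in $V\cap\langle g\rangle$, i.e. iff $a_{1,i-1}D_{i}+a_{1,i+1}D_{i-1}=0$; this polynomial relation is not identically satisfied on the nonempty irreducible locus where all $D_{j}\ne0$, so by the genericity argument of Theorem~\ref{ideal} a generic linear minimal $*$--reduction omits $x_{i}^{2}$. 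Combining the two cases, $*\core{\ideal{m}}=l*\core{\ideal{m}}$ contains no nonzero degree--two monomial, so $*\core{\ideal{m}}=\ideal{m}^{3}$.

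The step I expect to be the main obstacle is the exact identification of $V\cap\langle g_{1},\dots,g_{n}\rangle$, specifically that the $w_{k}$ span only a hyperplane of $\langle g_{1},\dots,g_{n}\rangle$ rather than the whole space --- this is precisely what rules out the edge monomials. It relies on the single linear relation among the $w_{k}$: after the rescaling $g_{i}\mapsto D_{i}g_{i}$ they become the rows of a cyclic difference operator whose kernel is spanned by the all--ones vector. One must be careful with the cyclic indexing mod $n$ and with the sign in $w_{k}=D_{k-1}g_{k-1}-D_{k}g_{k}$; with the opposite sign one would instead get a cyclic sum operator, which is invertible for odd $n$ and would yield the (false) conclusion that $x_{i}^{2}\in *\core{\ideal{m}}$.
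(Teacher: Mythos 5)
Your proof is correct, and it takes a genuinely different route from the paper's. The paper exhibits a \emph{single} explicit minimal $*$-reduction ($f_1$ the sum of odd-indexed variables, $f_2$ the sum of even-indexed variables, each adjusted by $x_n$), shows $x_1^2\notin I$ by writing out an inconsistent linear system, extends to all $x_i^2$ by the rotational symmetry of the cycle, and then excludes the edge monomials $x_ix_{i+1}$ via the contrapositive of Lemma~\ref{lemma} together with another symmetry appeal; it also splits into cases according to the parity of $n$ and leaves the even case largely to the reader. You instead parametrize \emph{all} linear minimal $*$-reductions by the nonvanishing of the consecutive minors $D_i$, and compute the entire degree-two graded piece of such an ideal exactly: the identification of $V\cap\langle g_1,\dots,g_n\rangle$ with the hyperplane $\sum_i y_i/D_i=0$ is the key new step, and it immediately yields that edge monomials lie in \emph{no} linear minimal $*$-reduction while each square is omitted by a generic one. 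This is uniform in $n$ (no parity split), avoids the paper's somewhat loose ``by symmetry, the nonzero $x_ix_j$ are not in $*\core\ideal{m}$'' inference from the contrapositive of Lemma~\ref{lemma}, and gives strictly more information (a full description of $I_2$ for every linear reduction). Two small remarks: your invocation of Corollary~\ref{cor2} is actually unnecessary for the containment $*\core\ideal{m}\subseteq\ideal{m}^3$, since showing a monomial is absent from \emph{some} minimal $*$-reduction already excludes it from $*\core\ideal{m}$ (and $*\core\ideal{m}\subseteq l*\core\ideal{m}$ holds trivially); and your square condition simplifies to $a_{1,i}\bigl(a_{1,i-1}a_{2,i+1}-a_{1,i+1}a_{2,i-1}\bigr)=0$, which makes the genericity claim transparent. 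Your closing caution about the sign in $w_k=D_{k-1}g_{k-1}-D_kg_k$ is well placed --- that sign is exactly what makes the $w_k$ span only a hyperplane.
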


\begin{proof}
Let $V=\{v_1,\ldots,v_n\}$ for $n\geq 3$ be the vertex set of the one dimensional simplicial complex \[\Delta=\{\{v_i\}:1\leq i\leq n\}\cup\{\{v_i,v_{i+1}\}:1\leq i\leq n-1\}\cup\{\{v_1,v_n\}\}\] and let $k[\Delta]=k[x_1,\ldots,x_n]/I_{\Delta}$ be the Stanley-Reisner ring associated to $\Delta$ and $I_{\Delta}$ contains all square free degree two monomials not in $\Delta$.  In all cases, we know that $\ideal{m}^3\subseteq *\core{\ideal{m}}$ by Theorem \ref{bound}.  We will show that $*\core{\ideal{m}}$ contains no degree two monomials.  To do this, we rely on the obvious symmetry of a cycle graph and the contrapositive to Lemma \ref{lemma}.

Case 1: $n$ is odd. Let $I\subset k[\Delta]$ be the ideal generated by the linear polynomials $f_1$ and $f_2$ such that \[f_1=x_1+x_3+\cdots +x_{n-2}+x_n\]\[f_2=x_2+x_4+\cdots +x_{n-1}+x_n\] i.e. $f_1$ is the sum of the odd number variables and $f_2$ is the sum of the even number variables plus $x_n$. By Theorem \ref{inclusion}, $I^*=\ideal{m}$.  Then by definition, $*\core{\ideal{m}}\subseteq I$. We will show $x_1^2\notin I$ and is therefore not in $*\core{\ideal{m}}$.

Suppose there exist polynomials $g_1$ and $g_2$ in $k[\Delta]$ such that $x_1^2=g_1f_1+g_2f_2$.  We can suppose $g_1$ and $g_2$ are linear because all homogeneous degree two polynomials in $I$ will come from products of linear polynomials.  Therefore, let \[g_1=a_1x_1+\cdots+a_nx_n\]\[g_2=b_1x_1+\cdots+b_nx_n\] and we will find values for the coefficients in these two polynomials.  In the product $f_1g_1+f_2g_2$, we get the following set of linear equations to help find the coefficients of $g_1$ and $g_2$: 
\begin{align*}
x_1^2 &: a_1=1\\
x_i^2 &: \begin{cases} 
      a_i=0 & i\text{ is odd and } i\neq 1, n\\
      b_i=0 & i\text{ is even} 
   \end{cases}
\\
x_n^2 &: a_n+b_n=0\\
x_ix_{i+1} &: \begin{cases}
a_{i}+b_{i-1}=0 & i\text{ is even}\\
a_{i-1}+b_{i}=0 & i\text{ is odd, }1<i<n
\end{cases}
\\
x_{n-1}x_n &: a_{n-1}+b_{n-1}+b_n=0\Rightarrow a_{n-1}+b_n=0\\
x_1x_n &: a_1+a_n +b_1=0\Rightarrow a_n+b_1=-1
\end{align*}
The last two equations we change because we know $a_1=1$ and $b_{n-1}=0$.  The last $n+1$ equations listed represent a linear system in $n+1$ variables.  The system of equations is inconsistent, which implies that no such coefficients exist.  Thus, $x_1^2\notin I$ and therefore $x_1^2\notin *\core{\ideal{m}}$.  Because of symmetry of the graph, $x_i^2\notin *\core{\ideal{m}}$ for $1\leq i\leq n$.  By the contrapositive of Lemma \ref{lemma}, this implies that not all monomials of the form $x_ix_j$, $i\neq j$ are in $*\core{\ideal{m}}$.  This can only mean the nonzero monomials, and by symmetry, we can say that the nonzero monomials of the form $x_ix_j$, $i\neq j$ are not in $*\core{\ideal{m}}$.  Thus as ideals of $k[\Delta]$, $*\core{\ideal{m}}\subseteq \ideal{m}^3$.

Case 2: $n$ is even.  The proof of this similar to the case when $n$ is odd. Number the variables of the ring in order around the cycle. Let $I$ be the ideal generated by $f_1$ and $f_2$ such that \[f_1=x_1+x_3+\cdots +x_{n-1}+x_n\]\[f_2=x_2+x_4+\cdots +x_{n-2}+x_n.\]Similar to before, $f_1$ is the sum of the odd numbered variables plus $x_n$ and $f_2$ is the sum of the even numbered variables. This ideal is a minimal $*$-reduction of $\ideal{m}$.  It can be shown that $x_1^2$ is not in this ideal, and consequently, there are no nonzero degree two monomials in $*\core{\ideal{m}}$ and $*\core{\ideal{m}}\subseteq \ideal{m}^3$ in both cases.
\end{proof}
\section{Integral Closure}
\label{ic}

The ideas of $*$-reductions, $*\spread$, and $*\core$ are studied in part because analogous notions exist in relation to integral closure. For integral closure, we study reductions, analytic spread and core. In fact, any $*$-reduction of an ideal $I$ is also a reduction \cite{nme*spread}, and consequently, core$(I)\subset *\core{(I)}$.  In general, the two cores are not equal. For example, let $R=k[x,y]$ and let $I=(x^2,xy,y^2)$ be an ideal of $R$.  Since $R$ is a polynomial ring over a field, all ideals are tightly closed. But $xy\in (x^2,y^2)^-$, the integral closure of $(x^2,y^2)$. Thus $xy\in*\core I$, but not in $\icore I$. In \cite{FoVa-core}, the authors show that core and $*\core$ agree if analytic spread is equal to $*\spread$ for normal local domains of characteristic $p>0$ with infinite perfect residue fields.  For any Stanley-Reisner ring $k[\Delta]$, an analogous result is true for the core and $*\core$ of $\ideal{m}$ in $k[\Delta]$. We define an ideal to be \textit{basic} if it has no reductions other than itself. The following important lemma is from Hays \cite[Example 2.8]{hays} and helps us show the core and $*\core$ of $\ideal{m}$ are equivalent in Stanley-Reisner rings.
\begin{lemma}
Let $k[x_1,\ldots,x_n]$ with $n\geq 2$ be a polynomial ring over a field.  Then the maximal ideal $(x_1,\ldots, x_n)$ is basic.
\end{lemma}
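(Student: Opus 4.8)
The plan is to prove that any reduction $J$ of $\ideal{m}=(x_1,\dots,x_n)$ must coincide with $\ideal{m}$. First I would record two standard facts about reductions \cite{HuSw-book}: a reduction has the same radical as the ideal it reduces (if $\ideal{m}^{r+1}=J\ideal{m}^r$ and $x\in\ideal{m}$, then $x^{r+1}\in\ideal{m}^{r+1}\subseteq J$), so $\sqrt{J}=\ideal{m}$ and hence $\ideal{m}^t\subseteq J\subseteq\ideal{m}$ for some $t\ge 1$ because $\ideal{m}$ is finitely generated.

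The heart of the argument is to show $J+\ideal{m}^2=\ideal{m}$. For this I would pass to the fiber cone of $\ideal{m}$, which here is $\mathcal{F}=\bigoplus_{i\ge 0}\ideal{m}^i/\ideal{m}^{i+1}$ and is canonically isomorphic to the polynomial ring $k[X_1,\dots,X_n]$, since $R$ is graded with $\ideal{m}$ the irrelevant ideal and thus $\ideal{m}^i/\ideal{m}^{i+1}=R_i$. By the standard theory of reductions, $J$ being a reduction of $\ideal{m}$ forces $\mathcal{F}$ to be integral, hence module-finite, over the $k$-subalgebra $k[W]\subseteq\mathcal{F}$ generated by the image $W$ of $J$ in $\mathcal{F}_1=\ideal{m}/\ideal{m}^2$. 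Now $W$ is a $k$-subspace of the space of linear forms $\mathcal{F}_1$, so a basis of $W$ consists of algebraically independent linear forms and $k[W]$ is a polynomial ring of dimension $\dim_k W$. Since an $n$-dimensional polynomial ring can be module-finite over a polynomial subring only when that subring is also $n$-dimensional, $\dim_k W=n$; that is, $W=\mathcal{F}_1$, and therefore $J+\ideal{m}^2=\ideal{m}$.

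I would then finish with an elementary filtration argument: from $\ideal{m}=J+\ideal{m}^2$ we get $\ideal{m}^2=\ideal{m}J+\ideal{m}^3\subseteq J+\ideal{m}^3$, hence $\ideal{m}=J+\ideal{m}^3$, and inductively $\ideal{m}=J+\ideal{m}^t$ for every $t\ge 2$; taking $t$ with $\ideal{m}^t\subseteq J$ yields $\ideal{m}\subseteq J$, so $J=\ideal{m}$. The main obstacle is the middle paragraph: one must be careful that a generating set of $J$ need not consist of homogeneous elements, so it is genuinely the images modulo $\ideal{m}^2$ that enter, and one must invoke (or reprove) the correspondence between reductions of $\ideal{m}$ and reductions of the irrelevant maximal ideal of the fiber cone $k[X_1,\dots,X_n]$. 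An alternative packaging is to observe that the analytic spread $\ell(\ideal{m})=\dim\mathcal{F}=n$ equals the minimal number of generators of $\ideal{m}$, so $\ideal{m}$ is its own minimal reduction and hence has no proper reduction; or, if one prefers to avoid that characterization, to localize at $\ideal{m}$, apply Nakayama's lemma to $J+\ideal{m}^2=\ideal{m}$ in the regular local ring $R_{\ideal{m}}$ to get $JR_{\ideal{m}}=\ideal{m}R_{\ideal{m}}$, and then use $\sqrt{J}=\ideal{m}$ to descend to $J=\ideal{m}$ in $R$.
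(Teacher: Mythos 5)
The paper offers no proof of this lemma at all: it is quoted verbatim from Hays (his Example 2.8), so there is no internal argument to compare yours against, and supplying a self-contained proof is arguably a service. Your argument is correct. Step (i) is fine: $\ideal{m}^{r+1}=J\ideal{m}^{r}\subseteq J$ gives $\sqrt{J}=\ideal{m}$ and $\ideal{m}^{t}\subseteq J$ for some $t$. For step (ii), rather than appealing to ``standard theory of reductions,'' note that the verification is two lines and worth writing out: from $\ideal{m}^{r+s}=J^{s}\ideal{m}^{r}$ one gets $\mathcal{F}_{r+s}=W^{s}\mathcal{F}_{r}$ in $\mathcal{F}=\bigoplus_{i\ge 0}\ideal{m}^{i}/\ideal{m}^{i+1}\cong k[X_1,\ldots,X_n]$ (the product of classes is well defined, and only the linear quotient map $\ideal{m}\to\ideal{m}/\ideal{m}^{2}$ enters, which settles your own worry about inhomogeneous generators of $J$); hence $\mathcal{F}$ is generated over $k[W]$ by the finite-dimensional space $\mathcal{F}_{0}\oplus\cdots\oplus\mathcal{F}_{r}$, so it is module-finite, dimensions match, $\dim_k W=n$, and $J+\ideal{m}^{2}=\ideal{m}$. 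Step (iii), the iteration $\ideal{m}=J+\ideal{m}^{t}$ combined with $\ideal{m}^{t}\subseteq J$, closes the argument. Two minor remarks: the hypothesis $n\ge 2$ is never used (the case $n=1$ is equally true and the restriction is just inherited from Hays's statement), and your closing alternative---that the analytic spread of $\ideal{m}$ equals $n=\mu(\ideal{m})$, so the image of any reduction must fill all of $\ideal{m}/\ideal{m}^{2}$---is the compressed form of the same fiber-cone computation and is the cleanest packaging if you want to cite rather than reprove.
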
 
\begin{theorem}
Let $k[\Delta]$ be a Stanley-Reisner ring and $\ideal{m}$ the maximal ideal of $k[\Delta]$ generated by the images of the variables. Then every reduction of $\ideal{m}$ is a $*$-reduction of $\ideal{m}$.
\end{theorem}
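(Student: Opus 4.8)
The plan is to reduce everything to the polynomial quotients $k[\Delta]/P$ and then combine Hays' lemma above with Theorems~\ref{poly} and~\ref{inclusion}. So let $J$ be a reduction of $\mathfrak{m}$. By definition $J\subseteq\mathfrak{m}$, and since $k[\Delta]$ is Noetherian there is an integer $N\ge 0$ with $J\mathfrak{m}^{N}=\mathfrak{m}^{N+1}$. The first step is to push this relation through the natural surjection $\pi\colon k[\Delta]\twoheadrightarrow k[\Delta]/P$ for each minimal prime $P$: since images of ideals multiply, $\bigl(Jk[\Delta]/P\bigr)\bigl(\mathfrak{m}k[\Delta]/P\bigr)^{N}=\bigl(\mathfrak{m}k[\Delta]/P\bigr)^{N+1}$, i.e. $Jk[\Delta]/P$ is a reduction of $\mathfrak{m}k[\Delta]/P$ in $k[\Delta]/P$.

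The second step is to identify $\mathfrak{m}k[\Delta]/P$. Using the description of the minimal primes of a Stanley--Reisner ring from Section~\ref{sr} (\cite[Theorem~5.1.4]{BH}), $P=\mathfrak{B}_{F}$ for some facet $F$, the ring $k[\Delta]/P$ is the polynomial ring on the variables indexed by $F$, and under this identification $\mathfrak{m}k[\Delta]/P$ is exactly the homogeneous maximal ideal of that polynomial ring, on $d:=|F|=\dim k[\Delta]/P$ generators. If $d\ge 2$, Hays' lemma tells us this ideal is basic, so it has no reduction other than itself and $Jk[\Delta]/P=\mathfrak{m}k[\Delta]/P$. If $d=1$, say $k[\Delta]/P=k[x]$, the relation $\bigl(Jk[\Delta]/P\bigr)(x)^{N}=(x)^{N+1}$ forces $x^{N+1}=gx^{N}$ for some $g\in Jk[\Delta]/P$, whence $x=g$ and $(x)\subseteq Jk[\Delta]/P\subseteq(x)$; the value $d=0$ is impossible because every facet contains a vertex. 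In every case $Jk[\Delta]/P=\mathfrak{m}k[\Delta]/P$ for all minimal primes $P$.

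The last step plugs this into the tight-closure criterion. Fix $x\in\mathfrak{m}$ and a minimal prime $P$. The image of $x$ in $k[\Delta]/P$ lies in $\mathfrak{m}k[\Delta]/P=Jk[\Delta]/P$, and $k[\Delta]/P$ is a polynomial ring, so by Theorem~\ref{poly} we have $Jk[\Delta]/P=\bigl(Jk[\Delta]/P\bigr)^{*}$; hence the image of $x$ lies in $\bigl(Jk[\Delta]/P\bigr)^{*}$. Since this holds for every minimal prime, Theorem~\ref{inclusion} gives $x\in\bigl(Jk[\Delta]\bigr)^{*}=J^{*}$. Thus $\mathfrak{m}\subseteq J^{*}$; combined with $J^{*}\subseteq\mathfrak{m}^{*}=\mathfrak{m}$ (the maximal ideal $\mathfrak{m}$ is prime, hence tightly closed) we get $J^{*}=\mathfrak{m}$, i.e. $J$ is a $*$-reduction of $\mathfrak{m}$.

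The only genuinely new ingredient is Hays' lemma, which is quoted; the rest is routine. The point that needs the most care is the second step — one must check that $J$ being a reduction of $\mathfrak{m}$ really makes $Jk[\Delta]/P$ a reduction of the \emph{full} homogeneous maximal ideal of the polynomial ring $k[\Delta]/P$ (not of some smaller ideal), so that the basicness statement can be applied; the $d=1$ case then has to be dispatched by hand because Hays' statement assumes at least two variables. Together with the observation recalled at the start of Section~\ref{ic} that every $*$-reduction of $\mathfrak{m}$ is a reduction of $\mathfrak{m}$, this theorem shows the two notions coincide for $\mathfrak{m}$.
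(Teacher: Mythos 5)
Your proof is correct and follows essentially the same route as the paper's: pass to $k[\Delta]/P$ for each minimal prime $P$, use Hays' lemma to see the induced reduction of $\ideal{m}k[\Delta]/P$ must be all of $\ideal{m}k[\Delta]/P$, and then conclude via Theorems \ref{poly} and \ref{inclusion}. The only difference is that you spell out the tight-closure criterion explicitly and handle the one-variable case by hand (which the paper's citation of Hays, stated for $n\geq 2$, technically leaves open), so your write-up is if anything slightly more complete.
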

\begin{proof}
Let $I$ be a reduction of $\ideal{m}$.  Then for all minimal primes $P$ in $k[\Delta]$, $I+P/P$ is a reduction of $\ideal{m}/P$. Since $k[\Delta]/P$ is a polynomial ring, $\ideal{m}/P$ is basic and therefore $I+P/P=\ideal{m}/P$. Thus $I$ is also a $*$-reduction of $\ideal{m}$.
\end{proof}

From this theorem, there are analogous $\icore$ results for the specific examples in Propostions \ref{p1}-\ref{p4} and \ref{pcycle}.  For all Stanley-Reisner rings, we get the following two corollaries:\begin{corollary}
The analytic spread of $\ideal{m}$ is equal to $*\spread{\ideal{m}}$ i.e. the analytic spread of $\ideal{m}$ is \[d=\dim k[\Delta]=\dim\Delta+1.\]In particular, all minimal reductions of $\ideal{m}$ have $d$ generators.
\end{corollary}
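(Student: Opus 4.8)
The plan is to show that, for the specific ideal $\ideal{m}$, the notion of reduction and the notion of $*$-reduction coincide; once that is established, equality of the two spreads is immediate. First I would recall the fact cited at the opening of this section (from \cite{nme*spread}): every $*$-reduction of an arbitrary ideal is automatically a reduction. Combining this with the theorem just proved — every reduction of $\ideal{m}$ is a $*$-reduction of $\ideal{m}$ — yields that an ideal $J\subseteq\ideal{m}$ is a reduction of $\ideal{m}$ if and only if it is a $*$-reduction of $\ideal{m}$.

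Next I would note that minimality is a property of the containment poset: $J$ is a minimal reduction of $\ideal{m}$ precisely when $J$ is a reduction and no proper subideal of $J$ is a reduction, and the analogous statement defines minimal $*$-reduction. Since the two classes of ideals coincide, their minimal members coincide as well. The analytic spread of $\ideal{m}$ is the least number of generators of a minimal reduction of $\ideal{m}$, while $*\spread\ideal{m}$ is the least number of generators of a minimal $*$-reduction of $\ideal{m}$; as these are literally the same ideals, the two invariants are equal. By Theorem \ref{spreaddn}, $*\spread\ideal{m}=\dim k[\Delta]=\dim\Delta+1=d$, hence the analytic spread of $\ideal{m}$ equals $d$.

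For the final assertion I would invoke Observation \ref{observ} and Corollary \ref{subideal}. By Observation \ref{observ} every $*$-reduction of $\ideal{m}$ — equivalently, by the identification above, every reduction of $\ideal{m}$ — requires at least $d$ generators. By Corollary \ref{subideal} a reduction of $\ideal{m}$ with more than $d$ generators properly contains a reduction of $\ideal{m}$ generated by $d$ elements, and so cannot be minimal. Therefore every minimal reduction of $\ideal{m}$ is generated by exactly $d$ elements.

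The argument is essentially bookkeeping, so there is no serious obstacle; the only point that deserves care is that the equivalence of reductions and $*$-reductions must be used as an identification of the underlying posets of ideals, so that ``minimal'' genuinely transfers from one side to the other, rather than merely matching up the minimal numbers of generators of the two ambient classes.
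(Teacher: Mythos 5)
Your proposal is correct and follows essentially the same route as the paper, which justifies this corollary by combining the preceding theorem (every reduction of $\ideal{m}$ is a $*$-reduction) with the cited fact that every $*$-reduction is a reduction, and then invoking Theorem \ref{spreaddn} and Corollary \ref{subideal}. Your added care about transferring minimality through the identification of the two posets of reductions is a worthwhile detail that the paper leaves implicit.
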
by Theorems \ref{spreaddn} and \ref{subideal} and\begin{corollary}
For the maximal ideal $\ideal{m}$ of $k[\Delta]$, \[\icore{\ideal{m}}=*\core{\ideal{m}}.\]In particular, \[\ideal{m}^{d+1}+\tau\ideal{m}\subseteq \icore{\ideal{m}}\subseteq\ideal{m}^{2}.\]
\end{corollary}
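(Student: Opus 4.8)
The plan is to deduce this corollary almost formally from the theorem just established, together with the standard fact recorded at the start of this section that every $*$-reduction of an ideal is a reduction (from \cite{nme*spread}). First I would observe that for $\ideal{m}$ the two notions of reduction coincide set-theoretically: given $I\subseteq\ideal{m}$, the ideal $I$ is a reduction of $\ideal{m}$ if and only if it is a $*$-reduction of $\ideal{m}$. One direction is the cited result from \cite{nme*spread}; the other is precisely the preceding theorem, whose proof passes to each minimal prime $P$, uses that $k[\Delta]/P$ is a polynomial ring, and invokes Hays' lemma that the maximal ideal of a polynomial ring in at least two variables is basic (and in the $\dim k[\Delta]=1$ case $\ideal{m}/P$ is principal, hence trivially basic, so nothing changes).

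Next I would upgrade this to an equality of the families being intersected. Minimality of a reduction, respectively a $*$-reduction, $J$ of $\ideal{m}$ is an intrinsic condition on the poset of proper sub-ideals $K\subsetneq J$: no such $K$ is again a reduction, respectively a $*$-reduction, of $\ideal{m}$. Since ``reduction of $\ideal{m}$'' and ``$*$-reduction of $\ideal{m}$'' name the same collection of subsets of $\ideal{m}$, ``minimal reduction of $\ideal{m}$'' and ``minimal $*$-reduction of $\ideal{m}$'' also name the same collection. Intersecting over this common family yields $\icore{\ideal{m}}=*\core{\ideal{m}}$.

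For the displayed ``in particular'' I would simply collect the bounds already proved for $*\core{\ideal{m}}$ and transport them along the equality just obtained. Theorem \ref{bound} gives $\ideal{m}^{d+1}\subseteq *\core{\ideal{m}}\subseteq\ideal{m}^{2}$, and the earlier observation that $\tau I\subseteq *\core{I}$ gives $\tau\ideal{m}\subseteq *\core{\ideal{m}}$; combining these, $\ideal{m}^{d+1}+\tau\ideal{m}\subseteq *\core{\ideal{m}}=\icore{\ideal{m}}\subseteq\ideal{m}^{2}$.

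I do not anticipate a genuine obstacle here, since all the real content lives in the preceding theorem. The only step requiring a moment's care is the bookkeeping in the second paragraph — checking that coincidence of the two notions of ``reduction of $\ideal{m}$'' actually forces coincidence of the two notions of ``minimal reduction of $\ideal{m}$,'' and hence of the two cores — but this is immediate once phrased in terms of the common ambient poset of sub-ideals of $\ideal{m}$.
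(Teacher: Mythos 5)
Your argument is correct and is essentially the paper's intended one: the preceding theorem (every reduction of $\ideal{m}$ is a $*$-reduction) together with the standard containment from \cite{nme*spread} shows the two families of (minimal) reductions of $\ideal{m}$ coincide, so the cores agree, and the displayed bounds then follow from Theorem \ref{bound} and the test-ideal observation. The only difference is that you spell out the identification of the two families directly, whereas the paper compresses this into a citation of \cite{FoVa-core}; your version is, if anything, more self-contained.
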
by \cite{FoVa-core} and Theorem \ref{bound}.

\section*{Acknowledgments}
The author would like to thank his advisor Neil Epstein for supplying the initial problem this work is based upon, continued support and direction, and conversations both fruitful and not.
\bibliographystyle{amsalpha}
\bibliography{rsch_refs}

\end{document}